\documentclass[a4paper, 11pt, reqno]{amsart}

\usepackage{amsmath,amssymb,amsthm}
\usepackage{enumerate}
\usepackage{xcolor}
\usepackage[mathscr]{euscript}
\usepackage{url}
\usepackage[all]{xy}
\usepackage{booktabs}
\usepackage{graphicx}
\usepackage{algorithmic}
\usepackage[subrefformat=parens, labelfont=up]{subcaption}
\usepackage{mathtools}
\usepackage{tablefootnote}
\usepackage{makecell}
\usepackage{multirow}
\usepackage{enumitem}
\usepackage{longtable}

\usepackage{hyperref}
\definecolor{cobalt}{rgb}{0.0, 0.28, 0.67}

\hypersetup
{
bookmarksopen=true,
pdftitle={Globally Convergent Jacobi-Type Algorithms for Robust Approximate Diagonalization of Higher-Order Cumulant Tensors},
pdfauthor={Jianze Li, Zhenyu Cai, Wentao Ding, Konstantin Usevich},
pdfsubject={Nonsmooth Jacobi-type algorithms for cumulant tensor diagonalization},
pdfmenubar=true,
pdfhighlight=/O,
colorlinks=true,
pdfpagemode=UseNone,
pdfpagelayout=SinglePage,
pdffitwindow=true,
linkcolor=cobalt,
citecolor=cobalt,
urlcolor=cobalt
}

\usepackage{aliascnt}
\usepackage[capitalize,nameinlink,noabbrev]{cleveref}

\usepackage[scale=0.75]{geometry}
\usepackage{tikz}
\usetikzlibrary{shapes}
\usetikzlibrary{arrows}
\usetikzlibrary{matrix}

\theoremstyle{plain}
\newtheorem{theorem}{Theorem}[section]
\newcommand{\newsharedtheorem}[3]{%
  \newaliascnt{#1}{theorem}%
  \newtheorem{#1}[#1]{#2}%
  \aliascntresetthe{#1}%
  \crefname{#1}{#2}{#3}%
  \Crefname{#1}{#2}{#3}}
\newsharedtheorem{corollary}{Corollary}{Corollaries}
\newsharedtheorem{lemma}{Lemma}{Lemmas}
\newsharedtheorem{proposition}{Proposition}{Propositions}

\theoremstyle{definition}
\newsharedtheorem{definition}{Definition}{Definitions}
\newsharedtheorem{problem}{Problem}{Problems}

\newsharedtheorem{example}{Example}{Examples}
\newsharedtheorem{notation}{Notation}{Notations}

\theoremstyle{remark}
\newsharedtheorem{remark}{Remark}{Remarks}

\theoremstyle{definition}

\usepackage{algorithm}

\newcommand{\tens}[1]{\boldsymbol{\mathcal{#1}}}
\newcommand{\tenselem}[1]{\mathcal{#1}}

\newcommand{\matr}[1]{\boldsymbol{#1}}

\newcommand{\vect}[1]{\boldsymbol{#1}}

\newcommand{\set}[1]{\mathcal{#1}}
\newcommand{\T}{{\sf T}}        
\renewcommand{\H}{{\sf H}}      

\makeatletter
\newcommand{\rank}[1]{\mathop{\operator@font rank}(#1)}
\newcommand{\colrank}[1]{\mathop{\operator@font colrank}\{#1\}}
\newcommand{\krank}[1]{\mathop{\operator@font krank}\{#1\}}
\newcommand{\trace}[1]{\mathop{\operator@font tr}\left(#1\right)}
\newcommand{\symmm}[1]{\mathop{\operator@font \bf symm}\left(#1\right)}
\newcommand{\symmo}[1]{\mathop{\operator@font sym}\left(#1\right)}
\newcommand{\skeww}[1]{\mathop{\operator@font skew}\left(#1\right)}
\newcommand{\skewww}[1]{\mathop{\operator@font \bf skew}\left(#1\right)}
\newcommand{\Diag}[1]{\mathop{\operator@font\bf Diag}\{#1\}}    
\newcommand{\diag}[1]{\textbf{\operator@font\bf diag}\left(#1\right)}    
\newcommand{\diagg}[2]{\mathop{\operator@font diag}_{#2}\{#1\}}    
\newcommand{\Span}[1]{\mathop{\operator@font Span}\{#1\}}    
\newcommand{\argmin}{\mathop{\operator@font argmin}}

\newcommand{\offdiag}[1]{\mathop{\operator@font offdiag}\{#1\}}    
\newcommand{\Proj}[2]{\mathop{\operator@font Proj_{#1}}{#2}}
\newcommand{\ProjGrad}[2]{\mathop{{\operator@font grad} }#1(#2)}

\newcommand{\expp}[1]{\mathop{\operator@font exp}\left(#1\right)}
\makeatother

\newcommand{\eqdef}{\stackrel{\sf def}{=}}
\newcommand{\RR}{\mathbb{R}}
\newcommand{\CC}{\mathbb{C}}

\newcommand{\Sphere}[1]{\set{S}^{#1}}
\newcommand{\TT}{\mathbb{T}}

\renewcommand{\Re}{\operatorname{Re}}
\renewcommand{\Im}{\operatorname{Im}}

\newcommand{\ON}[1]{\set{O}_{#1}}

\newcommand{\UN}[1]{\set{U}_{#1}}

\newcommand{\Gmat}[3]{\matr{G}^{(#1,#2,#3)}}

\newcommand{\contr}[1]{\mathop{\bullet_{#1}}}   

\newcommand{\subalgref}[1]{\hyperref[#1]{Subalgorithm~\ref*{#1}}}

\DeclareMathOperator*{\argmax}{\arg\!\max}

\newcommand{\grad}{\operatorname{grad}} 
\DeclareMathOperator{\dist}{dist}

\begin{document}

\title[Globally Convergent Jacobi-Type Algorithms]
{Globally Convergent Jacobi-Type Algorithms for Robust Approximate
Diagonalization of Higher-Order Cumulant Tensors}

\author{Jianze Li}
\address{School of Science, Sun Yat-sen University,
Shenzhen, Guangdong, China}
\email{lijianze@mail.sysu.edu.cn}

\author{Zhenyu Cai}
\address{School of Science, Sun Yat-sen University,
Shenzhen, Guangdong, China}
\email{caizhy27@mail2.sysu.edu.cn}

\author{Wentao Ding}
\address{Department of Systems Engineering and Engineering Management,
The Chinese University of Hong Kong, Shatin, N.T., Hong Kong}
\email{wentaoding@cuhk.edu.hk}

\author{Konstantin Usevich}
\address{Centre de Recherche en Automatique de Nancy (CRAN), Université de Lorraine, CNRS,
Campus Sciences, 54506 Vandœuvre-lès-Nancy, France}
\email{konstantin.usevich@univ-lorraine.fr}

\date{\today}

\keywords{independent component analysis, Jacobi-type algorithms, higher-order cumulant tensors, robust approximate diagonalization, nonsmooth Riemannian optimization, global convergence}

\subjclass[2020]{15A69, 49J52, 65K10, 90C26, 90C30, 94A12}

\begin{abstract}
The sums of the absolute values of higher-order cumulants are classical contrast functions for independent component analysis (ICA) and lead naturally to robust approximate diagonalization of higher-order cumulant tensors using $\ell_1$-based diagonalization over the orthogonal or unitary group. 
We propose Jacobi-SubG, a Jacobi-type framework that selects index pairs
using Riemannian subdifferential information and performs plane rotations
with a refinement step that preserves nonzero diagonal contractions.
For the general tensor model, we prove global convergence of the entire
iterate sequence from any initialization with nonzero diagonal contractions,
provided that the subproblem solvers satisfy
a uniform sufficient-ascent estimate.
For real and complex third- and fourth-order cumulant tensors,
which are widely used in ICA, we develop specialized subproblem
solvers satisfying these conditions.
The resulting algorithms thus enjoy global convergence under
the simple initialization condition that all diagonal contractions
are nonzero.
Numerical experiments on ICA and tensor diagonalization
show that the proposed Jacobi-SubG variants achieve better source recovery
and higher final objective values in several settings, particularly
at the higher tested noise levels.
\end{abstract}

\maketitle

\section{Introduction}

Let $\vect{z}(t)\in \CC^{n}$ be a complex-valued random vector. 
For $d\geq 3$, define the $d$th-order \emph{cumulant tensor}
$\tens{A}\in \CC^{n\times \cdots \times n}$ by
$\tenselem{A}_{i_1\cdots i_d}
=
\operatorname{cum}\bigl(
z_{i_1}^{*},\ldots,z_{i_r}^{*},
z_{i_{r+1}},\ldots,z_{i_d}
\bigr)$ with $1\leq i_1,\ldots,i_d\leq n,$ 
where an integer $r$ satisfies $0\leq r\leq d$ and  $(\cdot)^*$ denotes the complex conjugation.
In this paper, we study the following problem of \emph{robust approximate diagonalization of higher-order cumulant tensors}
\cite[Eq.~(7)]{moreau1996criteria}
\cite[Prop.~3.6 and p.~296]{Como10:book}:
\begin{align}\label{cost-fn-complex-l1-diag}
\max f(\matr{U})
\eqdef
\|\diag{\tens{W}}\|_1
=
\sum_{p=1}^{n} |\tens{A}
\contr{1} \vect{u}_p^{\H}
\cdots
\contr{r} \vect{u}_p^{\H}
\contr{r+1} \vect{u}_p^{\T}
\cdots
\contr{d} \vect{u}_p^{\T}|,
\end{align}
where $\matr{U}= (\vect{u}_1,\dots,\vect{u}_n) \in \UN{n}$ is a unitary matrix and the transformed tensor $\tens{W}$ is defined as 
$\tens{W}
\eqdef 
\tens{A}
\contr{1} \matr{U}^{\H}
\cdots
\contr{r} \matr{U}^{\H}
\contr{r+1} \matr{U}^{\T}
\cdots
\contr{d} \matr{U}^{\T}.$ 
For later use, denote the \(p\)th diagonal contraction by
\(g_p(\matr{U})\eqdef\tenselem{W}_{p\cdots p}\), so that
\(f(\matr{U})=\sum_{p=1}^n|g_p(\matr{U})|\).

The model \eqref{cost-fn-complex-l1-diag} encompasses several criteria widely used in \emph{blind source separation} (BSS) and
\emph{independent component analysis} (ICA) \cite{Como10:book}.
The classical \emph{linear instantaneous mixing} model
\cite[Eq.~(5.18)]{Como10:book} of ICA is
\begin{equation}\label{eq:ICA_Classical}
\vect{y}(t)=\matr{A}\vect{s}(t)+\vect{n}(t),
\end{equation}
where $\vect{y}(t)\in\CC^n$ is the observed mixture,
$\matr{A}$ is the unknown mixing matrix,
$\vect{s}(t)$ is the vector of latent sources, and
$\vect{n}(t)$ is additive noise. 
Let $\vect{z}(t)$ denote the observations after pre-whitening
\cite[Sec.~5.2.2]{Como10:book}.
ICA commonly recovers sources by maximizing 
non-Gaussianity based on higher-order cumulants
\cite{Hyvarinen2001}.
The source estimates are $\vect{x}(t)=\matr{U}^{\T}\vect{z}(t)$.
In particular, fourth- and third-order cumulants give rise to
the following kurtosis- and skewness-based criteria, respectively.

\begin{itemize}
\item[(i)]
Let $\tens{A}$ be the fourth-order cumulant tensor of $\vect{z}(t)$, with
$\tenselem{A}_{ijkl}=\mathrm{cum}(z_i^*,z_j^*,z_k,z_l)$. Then
\eqref{cost-fn-complex-l1-diag} becomes
\begin{align}\label{eq:hermit_sy_order4-new}
f_{ku}(\matr{U})
&=
\left\|
\diag{
\tens{A}
\contr{1}\matr{U}^{\H}
\contr{2}\matr{U}^{\H}
\contr{3}\matr{U}^{\T}
\contr{4}\matr{U}^{\T}}
\right\|_1
=
\sum_{p=1}^{n}
\left|\mathrm{cum}(x_p^*,x_p^*,x_p,x_p)\right|.
\end{align}
This summed absolute-kurtosis objective is a valid contrast 
\cite[Eq.~(42)]{moreau1996high},
\cite[Eq.~(7)]{moreau1996criteria},
\cite[Prop.~3.6, Eq.~(6.14), Eq.~(8.24)]{Como10:book} in ICA.
It is also known as the \emph{multiuser kurtosis} criterion
\cite{papadias2002globally}; its theoretical and algorithmic properties were studied in
\cite{ge2008analysis,ge2010efficient}.

\item[(ii)]
Let $\tens{A}$ be the third-order cumulant tensor, with
$\tenselem{A}_{ijk}=\mathrm{cum}(z_i^*,z_j,z_k)$. The corresponding skewness objective is
\begin{align}\label{eq:hermit_sy_order3-new}
f_{sk}(\matr{U})
&=
\left\|
\diag{
\tens{A}
\contr{1}\matr{U}^{\H}
\contr{2}\matr{U}^{\T}
\contr{3}\matr{U}^{\T}}
\right\|_1
=
\sum_{p=1}^{n}
\left|\mathrm{cum}(x_p^*,x_p,x_p)\right|.
\end{align}
This is also a valid contrast
\cite[Eq.~(42)]{moreau1996high},
\cite[Eq.~(7)]{moreau1996criteria}, 
\cite[Prop.~3.6]{Como10:book} in ICA.
It can be effective for asymmetric sources with weak kurtosis \cite{song2016ecoica}. Weighted combinations of absolute kurtosis and skewness have also been considered
\cite{moreau2002generalization,karvanen2004independent}, but are beyond the scope of this paper.
\end{itemize}

Two classical types of contrast functions are closely related to
\eqref{eq:hermit_sy_order4-new}, \eqref{eq:hermit_sy_order3-new}.
The first sums squared kurtosis or skewness, yielding tensor approximate
diagonalization \cite{comon1994independent,de1997signal}.
The second sums these cumulants directly, yielding approximate
tensor-trace maximization \cite{comon1997improved}
\cite[Sec.~3.2]{comon2007tensor}\cite[Sec.~5.6]{Como10:book},
typically when the source cumulants have a common sign.
Algorithms include JADE \cite{cardoso1993blind,cardoso1996jacobi},
Jacobi-CoM2 \cite{comon1994independent}, and other Jacobi-type methods
\cite{Como10:book,Lathauwer96:simultaneous,begovic2024trace},
with convergence analyses in
\cite{LUC2017globally,ULC2019,li2023convergence}.
A nonsymmetric analogue and an associated Jacobi-type method were also studied
in \cite{martin2008jacobi,sheng2023jacobi}. 
In contrast, the absolute-cumulant objectives \eqref{eq:hermit_sy_order4-new} and
\eqref{eq:hermit_sy_order3-new} reduce the relative dominance of large
cumulants compared with squared-sum criteria and, unlike trace
maximization, require no common-sign assumption.
However, direct optimization is more difficult because these objectives are nonsmooth.
Kurtosis-based FastICA is closely related to maximizing absolute
kurtosis \cite[Sec.~8.2.3]{Hyvarinen2001}
\cite{hyvarinen1997fast,hyvarinen1999fast}, but its fixed-point,
quasi-Newton-type iteration does not directly solve the nonsmooth
subproblems considered here \cite[Sec.~6.10.5]{Como10:book}.
Other methods include gradient-based approaches
\cite[Sec.~4]{moreau1996criteria}\cite[(8.13)]{Hyvarinen2001}
and planar Givens-rotation methods \cite[Sec.~3.3]{moreau1996high}. 
Recently, \cite{li2026ell} introduced an $\ell_1$-based model
for the joint diagonalization of tensors and matrices, solved by
proximal alternating maximization ($\ell_1$-PAM) using a strategy
related to that in~\cite{wang2023linear}.
In addition, \cite{li2026robust} proposed an optimization model based
on the $\ell_1$-norm of the off-diagonal entries of third-order tensors
and developed a Riemannian smoothing conjugate gradient (RSCG)
algorithm to solve it.
Both studies are related to the present work but employ
different algorithms; the off-diagonal model in~\cite{li2026robust}
also differs from ours.

In this paper, we develop Jacobi-type algorithms for
\eqref{cost-fn-complex-l1-diag} and study their global convergence\footnote{In this paper, \emph{global convergence} means convergence of the entire sequence of iterates.}. 
The main
contributions are summarized as follows.
\begin{itemize}
\item
We extend the gradient-based Jacobi-G framework
\cite{IshtAV13:simax,LUC2017globally,ULC2019} to the nonsmooth setting in \eqref{cost-fn-complex-l1-diag}.
The resulting Jacobi-SubG framework uses Riemannian subdifferential
information for index-pair selection and incorporates a refinement
step that preserves nonzero diagonal contractions.
For subproblem solvers satisfying a uniform
sufficient-ascent estimate, we prove global convergence of the entire
iterate sequence for \eqref{cost-fn-complex-l1-diag} from any
initialization with nonzero diagonal contractions.

\item
For \eqref{eq:hermit_sy_order4-new} and
\eqref{eq:hermit_sy_order3-new}, the nonsmooth Jacobi subproblems
have more complex structures than those in the smooth setting
\cite{LUC2017globally,ULC2019}.
We construct specialized subproblem
solvers that seek large objective
increases while satisfying the required uniform sufficient-ascent
estimate. 
These estimates establish
global convergence of the resulting algorithms under
the simple initialization condition that all diagonal contractions
are nonzero.

\item
Numerical experiments evaluate the proposed Jacobi-SubG variants on real
fourth-order ICA and tensor approximate diagonalization problems.
In ICA, they achieve lower mean Amari errors than Jacobi-CoM2
\cite{comon1994independent,LUC2017globally}, FastICA
\cite{hyvarinen1997fast,hyvarinen1999fast}, and RobustICA
\cite{zarzoso2009robust} at every tested setting.
For tensor approximate diagonalization, they attain higher mean final
objectives than the Riemannian subgradient (RSG) baseline adapted from
\cite{li2021weakly} and $\ell_1$-PAM \cite{li2026ell} throughout the noise
sweep and for most tested dimensions. They also have lower median solver
times than RSG at every tested tensor setting.
\end{itemize}

\begin{remark}
Although \eqref{cost-fn-complex-l1-diag} is stated for a complex tensor
$\tens{A}$ and a unitary variable $\matr{U}$, it includes the real-valued
setting as a special case. Restriction to $\RR$ gives
\begin{align}\label{cost-fn-complex-l1-diag-real}
\max_{\matr{Q}\in\ON{n}} f(\matr{Q})
\eqdef
\|\diag{\tens{W}}\|_1
=
\sum_{p=1}^{n}
\left|
\tens{A}
\contr{1}\vect{q}_p^\T
\cdots
\contr{d}\vect{q}_p^\T
\right|,
\end{align}
where $\matr{Q}=(\vect{q}_1,\ldots,\vect{q}_n)$ is orthogonal and
$\tens{W}=\tens{W}(\matr{Q})
\eqdef
\tens{A}
\contr{1}\matr{Q}^\T
\cdots
\contr{d}\matr{Q}^\T.$ 
This model uses real-valued ICA contrast functions based on the absolute
values of higher-order statistics, such as skewness and kurtosis. The
proposed algorithms and convergence results apply to this setting; 
any necessary distinctions are stated below.
\end{remark}

The remainder of this paper is organized as follows.
In \cref{sec:preliminaries}, we review the necessary background on matrix manifolds, nonsmooth analysis, and the {\L}ojasiewicz inequality.
In \cref{sec:jacobi-subg}, we introduce the nonsmooth Jacobi-SubG algorithm, establish the general conditional result and derive an initialization-based convergence theorem for the ICA objectives; the corresponding subproblem solvers are developed in \cref{sec:4th-order-sub,sec:3rd-order-sub-complex-real}.
Finally, numerical results and concluding remarks are presented in \cref{sec:numerical,sec:conclusion}, respectively.

\section{Preliminaries}\label{sec:preliminaries}

\subsection{Riemannian subdifferential}
Endow $\CC^{n\times n}$ with the inner product
$\langle \matr{A},\matr{B}\rangle=\operatorname{Re}\operatorname{tr}(\matr{A}^{\H}\matr{B})$
and view $\UN{n}\subseteq \CC^{n \times n}$ as a smooth embedded submanifold with the induced metric.
We write $\partial f$ for the limiting subdifferential
\cite{rockafellar1998variational} and $\partial_{\mathcal R}f$ for the limiting
Riemannian subdifferential of $f|_{\mathcal M}$ on such a submanifold $\mathcal M$
\cite{ledyaev2007nonsmooth}.
A function $f:\Omega\to\RR$ on an open convex set
$\Omega$ is $\rho$-weakly convex if $f+\frac{\rho}{2}\|\cdot\|_F^2$ is
convex, where $\rho\geq0$.
For such $f$ and $\mathcal M\subseteq\Omega$, we have
\cite{ledyaev2007nonsmooth,li2021weakly}
\begin{equation}\label{eq:riemannian-subdiff}
\partial_{\mathcal R}f(\matr{X})
=\mathcal P_{T_{\matr{X}}\mathcal M}\bigl(\partial f(\matr{X})\bigr),
\qquad \matr{X}\in\mathcal M,
\end{equation}
where $\mathcal P_{T_{\matr{X}}\mathcal M}$ is the orthogonal tangent projection.
For $\UN{n}$, we have
$T_{\matr{U}}\UN{n}=\{\matr{U}\matr{K}:\matr{K}^{\H}=-\matr{K}\}$ and
$\mathcal P_{T_{\matr{U}}\UN{n}}(\matr{Z})=\matr{Z}-\matr{U}\operatorname{sym}(\matr{U}^{\H}\matr{Z})$,
where $\operatorname{sym}(\matr{B})\eqdef (\matr{B}+\matr{B}^{\H})/2$ \cite{boumal2023intromanifolds,li2019polar}.

For our objective \eqref{cost-fn-complex-l1-diag},
$f=\ell\circ\vect{g}$, where $\ell(\vect{z})=\sum_{p=1}^n|z_p|$
and $\vect{g}=(g_1,\ldots,g_n)^{\T}$.
Since $\ell$ is convex and Lipschitz and each $g_p$ is polynomial,
$f$ is weakly convex on any bounded open convex neighborhood of $\UN{n}$
\cite{drusvyatskiy2019efficiency}. Define
$\set{S}(z)=\{z/|z|\}$ for $z\ne0$ and
$\set{S}(0)=\{\alpha\in\CC:|\alpha|\leq1\}$.
Let $Dg_p(\matr{U})^*$ denote the adjoint of the real differential.
The chain rule \cite{rockafellar1998variational} gives
\begin{equation}\label{eq:tensor-subdiff}
\partial f(\matr{U})
=\left\{
\sum_{p=1}^nDg_p(\matr{U})^*[\alpha_p]:
\alpha_p\in\set{S}(g_p(\matr{U})),\quad p=1,\ldots,n
\right\}.
\end{equation}

By \eqref{eq:riemannian-subdiff} and \eqref{eq:tensor-subdiff},
one possible Riemannian subgradient is obtained by setting
$\widehat\alpha_p=g_p(\matr{U})/|g_p(\matr{U})|$
when $g_p(\matr{U})\ne0$ and $\widehat\alpha_p=0$ otherwise:
\begin{equation}\label{eq:selected-riemannian-subgradient}
\matr{G}_{\mathrm R}(\matr{U})
=\mathcal P_{T_{\matr{U}}\UN{n}}
\left(\sum_{p=1}^nDg_p(\matr{U})^*[\widehat\alpha_p]\right)
\in\partial_{\mathcal R}f(\matr{U}).
\end{equation}

\subsection{The {\L}ojasiewicz gradient inequality}

The classical {\L}ojasiewicz gradient inequality holds for real-analytic
functions \cite{lojasiewicz1963propriete,law1965ensembles} and has a
nonsmooth extension \cite{bolte2007lojasiewicz}. 
For further details and subsequent developments, 
see \cite{attouch2013convergence,bolte2014proximal}.

\begin{definition}\label{def:Nonsmooth_KL}
Let $\phi:\RR^n\to\RR\cup \{+\infty\}$ be proper and lower semicontinuous.
The function $\phi$ is said to satisfy a
\emph{nonsmooth {\L}ojasiewicz inequality}
at a point $\vect{x}^\ast\in\operatorname{dom} \partial\phi$
if there exist constants $c>0$, $\mu \in [0,1)$ and a neighborhood $\set{V}$ of $\vect{x}^\ast$
such that for all $\vect{x}\in \set{V}\cap\operatorname{dom}\phi$ with
$\phi(\vect{x})\ne\phi(\vect{x}^\ast)$, we have
\begin{equation}
\label{eq:nonsmooth_loja}
\dist\bigl(\vect{0},\partial \phi(\vect{x})\bigr)
\;\ge\;
c\,|\phi(\vect{x})-\phi(\vect{x}^\ast)|^{\mu}.
\end{equation}
\end{definition}

The following general discrete convergence result can be obtained using the standard finite-length
argument underlying \cite[Theorem~3.2]{AbsMA05:sjo}; see also the continuous
{\L}ojasiewicz convergence theorem
\cite{lojasiewicz1982trajectoires,lageman2002konvergenz}. 

\begin{lemma}\label{lem:euclidean-KL-sequence}
Let $\phi:\RR^n\to\RR\cup \{+\infty\}$ be continuous on $\operatorname{dom} \phi$. 
Suppose that a sequence $\{\vect{x}_k\}_{k\ge 0}\subseteq \operatorname{dom} \phi$ has an accumulation point $\vect{x}_\ast$. 
Assume that the following conditions hold: 
\begin{enumerate}
\item[\textup{(A1)}] there exist $\sigma>0, \alpha\geq 0, \beta\geq 0$ such that, for all large enough $k$, 
\begin{equation*}
\phi(\vect{x}_k)-\phi(\vect{x}_{k+1}) \geq\sigma\, \dist\bigl(\vect{0},\partial \phi(\vect{x}_k)\bigr)^\alpha\,\|\vect{x}_{k+1}-\vect{x}_k\|^\beta;
\end{equation*}
\item[\textup{(A2)}] there exist
$\kappa>0, \eta\geq0$ 
such that, for all large enough $k$ satisfying $\vect{x}_{k+1}\neq\vect{x}_k$, 
\begin{equation*}
\|\vect{x}_{k+1}-\vect{x}_k\|^{\beta-1}
\geq
\kappa\,\dist\bigl(\vect{0},\partial \phi(\vect{x}_k)\bigr)^{\eta};
\end{equation*} 
\item[\textup{(A3)}] for all sufficiently large $k$, it holds that  
\begin{equation*}
\phi(\vect{x}_{k+1}) = \phi(\vect{x}_k)\ \Longrightarrow\ \vect{x}_{k+1}=\vect{x}_k.
\end{equation*}
\end{enumerate}
If $\phi$ satisfies the nonsmooth \L{}ojasiewicz inequality \eqref{eq:nonsmooth_loja} at $\vect{x}_\ast$, and 
$(\alpha+\eta)\mu<1$, 
where $\mu$ is from \eqref{eq:nonsmooth_loja}, 
then the sequence $\{\vect{x}_k\}_{k \geq 0}$ converges to $\vect{x}_\ast$.
\end{lemma}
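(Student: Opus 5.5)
The plan is the standard finite-length argument of \cite[Theorem~3.2]{AbsMA05:sjo}, adapted to the combined condition (A1)--(A2).

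First, the function values are handled. By (A1), $\{\phi(\vect{x}_k)\}$ is nonincreasing for large $k$. Since $\vect{x}_\ast$ is an accumulation point and $\phi$ is continuous on $\operatorname{dom}\phi$, the values along a subsequence tend to $\phi(\vect{x}_\ast)$. Monotonicity then gives $\phi(\vect{x}_k)\downarrow\phi(\vect{x}_\ast)$ and $\phi(\vect{x}_k)\ge\phi(\vect{x}_\ast)$ for all large $k$.

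Two trivial cases are disposed of immediately. If $\phi(\vect{x}_{k_0})=\phi(\vect{x}_\ast)$ for some large $k_0$, the values are constant from $k_0$ on. Then (A3) forces $\vect{x}_k=\vect{x}_{k_0}$ for all $k\ge k_0$, so the sequence converges, and its limit must be the accumulation point $\vect{x}_\ast$. We may therefore assume $\phi(\vect{x}_k)>\phi(\vect{x}_\ast)$ for all $k$. By (A3), if $\vect{x}_{k+1}=\vect{x}_k$ the step contributes nothing to the length, so only steps with $\vect{x}_{k+1}\ne\vect{x}_k$ matter.

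The key step is a per-step length estimate. Write $\Delta_k=\phi(\vect{x}_k)-\phi(\vect{x}_\ast)>0$ and $\delta_k=\dist(\vect{0},\partial\phi(\vect{x}_k))$. Take $k$ large with $\vect{x}_k\in\set{V}$, where $\set{V}$ is the Łojasiewicz neighborhood, and $\vect{x}_{k+1}\ne\vect{x}_k$. Combining (A1) and (A2) gives
\begin{equation*}
\Delta_k-\Delta_{k+1}\ge\sigma\,\delta_k^{\alpha}\|\vect{x}_{k+1}-\vect{x}_k\|^{\beta-1}\|\vect{x}_{k+1}-\vect{x}_k\|\ge\sigma\kappa\,\delta_k^{\alpha+\eta}\|\vect{x}_{k+1}-\vect{x}_k\|.
\end{equation*}
The Łojasiewicz inequality \eqref{eq:nonsmooth_loja} gives $\delta_k\ge c\Delta_k^{\mu}$, and hence $\delta_k^{\alpha+\eta}\ge c^{\alpha+\eta}\Delta_k^{\theta}$ with $\theta=(\alpha+\eta)\mu<1$. (If $\alpha+\eta=0$, the bound is direct.) Concavity of $t\mapsto t^{1-\theta}$ gives $\Delta_k^{1-\theta}-\Delta_{k+1}^{1-\theta}\ge(1-\theta)\Delta_k^{-\theta}(\Delta_k-\Delta_{k+1})$. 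Together these yield
\begin{equation*}
\|\vect{x}_{k+1}-\vect{x}_k\|\le C\bigl(\Delta_k^{1-\theta}-\Delta_{k+1}^{1-\theta}\bigr),\qquad C=\frac{1}{\sigma\kappa c^{\alpha+\eta}(1-\theta)}.
\end{equation*}
The condition $\theta<1$ is exactly what is needed here.

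The main obstacle is that this estimate holds only while the iterates stay in $\set{V}$, so we need a trapping argument. Let $\set{V}\supseteq B(\vect{x}_\ast,\varepsilon)$. Choose $K$ large such that three conditions hold: (A1)--(A3) are valid from $K$ on, $\|\vect{x}_K-\vect{x}_\ast\|<\varepsilon/2$ (possible since $\vect{x}_\ast$ is an accumulation point), and $C\Delta_K^{1-\theta}<\varepsilon/2$ (possible since $\Delta_k\to0$). We then show by induction that $\vect{x}_k\in B(\vect{x}_\ast,\varepsilon)$ for all $k\ge K$. Indeed, if $\vect{x}_K,\ldots,\vect{x}_{m}$ all lie in the ball, then telescoping gives
\begin{equation*}
\|\vect{x}_{m+1}-\vect{x}_\ast\|\le\|\vect{x}_K-\vect{x}_\ast\|+\sum_{k=K}^{m}\|\vect{x}_{k+1}-\vect{x}_k\|\le\tfrac{\varepsilon}{2}+C\Delta_K^{1-\theta}<\varepsilon.
\end{equation*}
Hence $\sum_k\|\vect{x}_{k+1}-\vect{x}_k\|<\infty$, so $\{\vect{x}_k\}$ is Cauchy and converges. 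Its limit must coincide with the accumulation point $\vect{x}_\ast$.

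A small technicality remains: if $\vect{x}_k\notin\operatorname{dom}\partial\phi$, then $\delta_k=+\infty$ and the bounds hold trivially. Only the inequality direction is used throughout.
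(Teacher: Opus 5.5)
Your proposal is correct and follows the paper's proof essentially step for step. It uses the same monotonicity and limit-value argument, the same per-step bound $\|\vect{x}_{k+1}-\vect{x}_k\|\le C(\Delta_k^{1-\theta}-\Delta_{k+1}^{1-\theta})$ with the identical constant $C$, and the same trapping argument in a ball around $\vect{x}_\ast$ (you phrase it as an induction, the paper as a first-exit contradiction); the one point to make explicit is that your continuity step needs $\vect{x}_\ast\in\operatorname{dom}\phi$, which holds because the {\L}ojasiewicz inequality at $\vect{x}_\ast$ presupposes $\vect{x}_\ast\in\operatorname{dom}\partial\phi$.
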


\begin{proof}
Let $\ell=\phi(\vect{x}_\ast)$ and choose a subsequence
$\vect{x}_{k_j}\to\vect{x}_\ast$. The assumed nonsmooth
{\L}ojasiewicz inequality ensures that
$\vect{x}_\ast\in\operatorname{dom}\phi$, so continuity gives
$\phi(\vect{x}_{k_j})\to\ell$. By \textup{(A1)}, the objective values
are eventually nonincreasing and hence converge to $\ell$.
Thus $e_k:=\phi(\vect{x}_k)-\ell\geq0$ for all sufficiently large $k$.
If $e_k=0$ at such an index, monotonicity and \textup{(A3)} make the
sequence constant thereafter, with value $\vect{x}_\ast$.
We may therefore assume that $e_k>0$ throughout the tail.

Choose $r>0$ and $c>0$ such that the nonsmooth
{\L}ojasiewicz inequality holds on
$B_r(\vect{x}_\ast)\cap\operatorname{dom}\phi$.
Set $d_k=\dist(\vect{0},\partial\phi(\vect{x}_k))$,
$s_k=\|\vect{x}_{k+1}-\vect{x}_k\|$, and
$p=1-(\alpha+\eta)\mu\in(0,1]$.
For all sufficiently large $k$ with
$\vect{x}_k\in B_r(\vect{x}_\ast)$ and $s_k>0$,
\textup{(A1)}, \textup{(A2)}, and the nonsmooth
{\L}ojasiewicz inequality yield
\[
e_k-e_{k+1}
\geq \sigma\kappa d_k^{\alpha+\eta}s_k
\geq \sigma\kappa c^{\alpha+\eta}e_k^{1-p}s_k.
\]
By concavity,
$e_k^p-e_{k+1}^p\geq p e_k^{p-1}(e_k-e_{k+1})$, and therefore
\begin{equation}\label{eq:finite_step}
s_k\leq C(e_k^p-e_{k+1}^p),
\qquad C=(\sigma\kappa p c^{\alpha+\eta})^{-1}.
\end{equation}
This estimate also holds trivially when $s_k=0$.

Choose $k_0$ from the convergent subsequence, sufficiently large that
the preceding estimates apply and
$\|\vect{x}_{k_0}-\vect{x}_\ast\|+Ce_{k_0}^p<r$.
If $j>k_0$ were the first index with
$\vect{x}_j\notin B_r(\vect{x}_\ast)$, then
\eqref{eq:finite_step} would hold for every $k_0\leq k<j$,
including the step leaving the ball. Summing gives
\[
\|\vect{x}_j-\vect{x}_\ast\|
\leq \|\vect{x}_{k_0}-\vect{x}_\ast\|
+\sum_{k=k_0}^{j-1}s_k
\leq \|\vect{x}_{k_0}-\vect{x}_\ast\|+Ce_{k_0}^p<r,
\]
a contradiction. Hence the entire tail remains in the ball, and
\eqref{eq:finite_step} implies
$\sum_{k=k_0}^{\infty}s_k\leq Ce_{k_0}^p<\infty$.
Thus the sequence is Cauchy and, since a subsequence converges to
$\vect{x}_\ast$, the whole sequence converges to $\vect{x}_\ast$.
\end{proof}

\begin{remark}
When $\alpha=\beta=1$ and $\eta=0$, condition~\textup{(A2)}
holds automatically, so the convergence criterion essentially reduces
to \textup{(A1)} and \textup{(A3)}, in line with the smooth framework
of \cite[Theorem~3.2]{AbsMA05:sjo}.
Another common choice is $\alpha=0$, $\beta=2$, and $\eta=1$,
which gives quadratic sufficient decrease and a relative-error bound
of the type used in \cite[Sec.~2.3]{attouch2013convergence}.
Beyond these two standard cases, the general framework of
\cref{lem:euclidean-KL-sequence} provides more choices of
exponents that guarantee convergence under the stated assumptions.
\end{remark}

We next state the intrinsic Riemannian counterpart. It extends the smooth
formulation in \cite[Definition~2.1]{SU15:pro}; see also
\cite{huang2022riemannian} for Riemannian
Kurdyka--{\L}ojasiewicz inequalities.

\begin{definition}
Let $\varphi:\RR^n\to\RR$ be lower semicontinuous. 
Let $\set{M}\subseteq\RR^{n}$ be a finite-dimensional smooth Riemannian submanifold.
We say that $\varphi$ satisfies the \emph{Riemannian nonsmooth {\L}ojasiewicz inequality}
at a point $\vect{x}^\ast\in\operatorname{dom}\partial_{\mathcal R}\varphi\subseteq\set{M}$
if there exist constants $c>0$, $\mu \in [0,1)$ and a neighborhood $\set{V}$ of $\vect{x}^\ast$ in $\set{M}$
such that for all $\vect{x}\in \set{V}$ with
$\varphi(\vect{x})\ne\varphi(\vect{x}^\ast)$, it holds that
\begin{equation}\label{eq:riemannian-lojasiewicz}
\dist\bigl(\vect{0},\partial_{\mathcal R}\varphi(\vect{x})\bigr)
\;\ge\;
c\,\bigl|\varphi(\vect{x})-\varphi(\vect{x}^\ast)\bigr|^\mu.
\end{equation}
\end{definition}

We now present a Riemannian version of
\cref{lem:euclidean-KL-sequence}, providing a nonsmooth
counterpart of \cite[Theorem~2.3]{SU15:pro} in the
smooth-manifold setting (when $\alpha=\beta=1$ and $\eta=0$). The proof follows by applying
\cref{lem:euclidean-KL-sequence} to
$\phi:=\varphi+\delta_{\set{M}}$, where $\delta_{\set{M}}$
is the indicator function of $\set{M}$.

\begin{proposition}
\label{prop:riemannian-KL-convergence}
Let $\varphi:\RR^n\to\RR$ be continuous, and let
$\set{M}\subseteq\RR^n$ be a closed smooth embedded submanifold.
Suppose that a sequence
$\{\vect{x}_k\}_{k\geq0}\subseteq\set{M}$
has an accumulation point $\vect{x}_\ast\in\set{M}$.
Assume that the following conditions hold:
\begin{enumerate}
\item[\textup{(A1)}]
there exist $\sigma>0, \alpha\geq 0, \beta\geq 0$ such that, for all large enough $k$,  
\[
\varphi(\vect{x}_k)-\varphi(\vect{x}_{k+1})
\geq
\sigma\,\dist\bigl(\vect{0},
\partial_{\mathcal R}\varphi(\vect{x}_k)\bigr)^\alpha
\|\vect{x}_{k+1}-\vect{x}_k\|^\beta.
\]
\item[\textup{(A2)}]
there exist
$\kappa>0, \eta\geq0$ 
such that, for all large enough $k$ satisfying $\vect{x}_{k+1}\neq\vect{x}_k$,
\[
\|\vect{x}_{k+1}-\vect{x}_k\|^{\beta-1}
\geq
\kappa\,\dist\bigl(\vect{0},
\partial_{\mathcal R}\varphi(\vect{x}_k)\bigr)^\eta.
\]
\item[\textup{(A3)}]
for all sufficiently large $k$, it holds that  
\[
\varphi(\vect{x}_{k+1})=\varphi(\vect{x}_k)
\quad\Longrightarrow\quad
\vect{x}_{k+1}=\vect{x}_k.
\]
\end{enumerate}
If $\varphi$ satisfies the Riemannian nonsmooth
{\L}ojasiewicz inequality
\eqref{eq:riemannian-lojasiewicz} at $\vect{x}_\ast$
with exponent $\mu$ satisfying $(\alpha+\eta)\mu<1$,
then $\vect{x}_k\to\vect{x}_\ast$.
\end{proposition}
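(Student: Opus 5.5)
We will deduce \cref{prop:riemannian-KL-convergence} from \cref{lem:euclidean-KL-sequence} applied to $\phi:=\varphi+\delta_{\set{M}}$, as indicated before the statement. Since $\set{M}$ is closed and $\varphi$ is continuous, $\phi$ is proper and lower semicontinuous. Its domain is $\operatorname{dom}\phi=\set{M}$, and $\phi|_{\set{M}}=\varphi|_{\set{M}}$ is continuous there. The sequence $\{\vect{x}_k\}$ lies in $\set{M}$ and has the accumulation point $\vect{x}_\ast\in\set{M}$, so the standing hypotheses of the lemma hold. Condition (A3) for $\phi$ is identical to (A3) for $\varphi$, because the two functions coincide on the iterates.

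The key step is to compare subdifferentials. We need, for every $\vect{x}\in\set{M}$,
\[
\dist\bigl(\vect{0},\partial\phi(\vect{x})\bigr)
=\dist\bigl(\vect{0},\partial_{\mathcal R}\varphi(\vect{x})\bigr).
\]
By the standard calculus for limiting subdifferentials on an embedded submanifold \cite{ledyaev2007nonsmooth}, we have
\[
\partial(\varphi+\delta_{\set{M}})(\vect{x})=\partial_{\mathcal R}\varphi(\vect{x})+N_{\vect{x}}\set{M},
\]
with $\partial_{\mathcal R}\varphi(\vect{x})\subseteq T_{\vect{x}}\set{M}$ and $N_{\vect{x}}\set{M}=(T_{\vect{x}}\set{M})^\perp$. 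Equivalently, in local coordinates, $\phi$ is the pullback of $\varphi\circ\psi$ under a chart $\psi$ that straightens $\set{M}$. Because the two summands lie in orthogonal subspaces, Pythagoras gives $\|\vect{v}+\vect{w}\|\ge\|\vect{v}\|$ for $\vect{v}\in T_{\vect{x}}\set{M}$ and $\vect{w}\in N_{\vect{x}}\set{M}$, with equality at $\vect{w}=\vect{0}$. Hence the two distances agree, and in particular $\operatorname{dom}\partial\phi\cap\set{M}=\operatorname{dom}\partial_{\mathcal R}\varphi$.

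With this identity, (A1) and (A2) for $\varphi$ become (A1) and (A2) for $\phi$ verbatim. For the {\L}ojasiewicz inequality, take the Riemannian neighborhood $\set{V}$; since $\set{M}$ is embedded, $\set{V}=\set{V}'\cap\set{M}$ for some open $\set{V}'\subseteq\RR^n$. Then for $\vect{x}\in\set{V}'\cap\operatorname{dom}\phi$ with $\phi(\vect{x})\neq\phi(\vect{x}_\ast)$, the estimate \eqref{eq:riemannian-lojasiewicz} transfers to \eqref{eq:nonsmooth_loja} with the same $c$ and $\mu$. Also $\vect{x}_\ast\in\operatorname{dom}\partial\phi$. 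Since $(\alpha+\eta)\mu<1$, \cref{lem:euclidean-KL-sequence} yields $\vect{x}_k\to\vect{x}_\ast$.

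The main obstacle is justifying the subdifferential sum rule $\partial(\varphi+\delta_{\set{M}})=\partial_{\mathcal R}\varphi+N\set{M}$ for a merely continuous $\varphi$. Only the inequality $\dist(\vect{0},\partial\phi)\ge\dist(\vect{0},\partial_{\mathcal R}\varphi)$ is needed for (A2) and the {\L}ojasiewicz transfer, and the reverse inequality for (A1). If the general rule is unavailable, we would fall back on the definition of $\partial_{\mathcal R}$ in \cite{ledyaev2007nonsmooth}, which is via charts. Locally $\phi=\tilde\varphi\circ\Psi+\delta_{\{y''=0\}}$ under a diffeomorphism $\Psi$ straightening $\set{M}$. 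The subdifferential of a function plus the indicator of a linear subspace, of the form $h(y')+\delta(y''=0)$, is exactly $\partial h\times\RR^{m}$, and the chain rule for diffeomorphisms then recovers the formula. For the weakly convex objectives used later in the paper, \eqref{eq:riemannian-subdiff} already gives this directly.
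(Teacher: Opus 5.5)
Your proposal is correct and follows the paper's route: the paper's entire proof is the one-line instruction to apply \cref{lem:euclidean-KL-sequence} to $\phi=\varphi+\delta_{\set{M}}$, and your chart-based identification $\partial\phi(\vect{x})=\partial_{\mathcal R}\varphi(\vect{x})+N_{\vect{x}}\set{M}$ supplies the distance identity that the paper leaves implicit. One small slip, harmless because you prove equality of the two distances: (A2), like (A1), needs $\dist(\vect{0},\partial\phi)\le\dist(\vect{0},\partial_{\mathcal R}\varphi)$, not $\ge$; only the {\L}ojasiewicz transfer uses $\ge$.
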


\section{Subgradient-based Jacobi-type Algorithms} \label{sec:jacobi-subg}

In this section, we introduce the Jacobi-SubG framework and establish
its global convergence. 
The proofs of the results in this section are postponed to \cref{app:jacobi-subg-proofs}.

\subsection{Jacobi-SubG algorithm} \label{subproblem-general-2} 
Let $(i,j)$ be an index pair with $1 \leq i < j \leq n$.
Then, for a matrix $\matr{\Psi} \in \UN{2}$, we define the \emph{plane transformation} in $\UN{n}$ as
\begin{equation}\label{eq:2.8}
\matr{G}^{(i,j,\matr{\Psi})}
=
\left[
\begin{array}{ccccccc}
1 &        &        &        &        &        &   \\
& \ddots &        &        &        &        &   \\
&        & \Psi_{1,1} & \cdots & \Psi_{1,2} &        &   \\
&        & \vdots & \ddots & \vdots &        &   \\
&        & \Psi_{2,1} & \cdots & \Psi_{2,2} &        &   \\
&        &        &        &        & \ddots &   \\
&        &        &        &        &        & 1
\end{array}
\right],
\end{equation}
which coincides with $\matr{I}_n$ except at the positions $(i,i)$, $(i,j)$, $(j,i)$, and $(j,j)$.
Let $f$ be the objective function in \eqref{cost-fn-complex-l1-diag}, and fix
$\matr{U}\in\UN{n}$.
We define the following elementary function:
\begin{equation}\label{eq:2.9}
h_{(i,j),\matr{U}}:\UN{2} \to \RR,
\ \ 
\matr{\Psi} \mapsto f\bigl(\matr{U} \matr{G}^{(i,j,\matr{\Psi})}\bigr).
\end{equation}
To solve the subproblem in \eqref{eq:2.9}, we need to parameterize $\matr{\Psi}$.
In this paper, we only consider \emph{rotations}:
\begin{equation}\label{eq:2.10}
\matr{\Psi}(c,s_1,s_2)
=
\begin{bmatrix}
c & -s \\
s^* & c
\end{bmatrix}
=
\begin{bmatrix}
c & -(s_1 + i s_2) \\
s_1 - i s_2 & c
\end{bmatrix}
=
\begin{bmatrix}
\cos\theta & -\sin\theta e^{i\varphi} \\
\sin\theta e^{-i\varphi} & \cos\theta
\end{bmatrix},
\end{equation}
where $c \geq0$, $s = s_1 + i s_2 \in \CC$, $c^2 + |s|^2 = 1$.
It is worth noting that the set of matrices in \eqref{eq:2.10} is a proper subset of that in \eqref{eq:2.8}. 
This restriction is justified because, as pointed out in \cite[Eq. (2.11)]{ULC2019}, 
the objective function \eqref{cost-fn-complex-l1-diag} is invariant under multiplication of the columns of $\matr{U}$ by scalars from $\TT\eqdef \{z \in \mathbb{C} : |z| = 1\}$, i.e.,
\begin{equation*}
f(\matr{U}) = f(\matr{U}\matr{S})
\ \ 
\text{for all }
\matr{S} =
\begin{bmatrix}
z_1 &        & 0 \\
& \ddots &   \\
0   &        & z_n
\end{bmatrix},
\quad
z_i \in \TT,\ 1 \leq i \leq n.
\end{equation*}
As in \cite{ULC2019}, we refer to $\matr{G}^{(i,j,\matr{\Psi})}$ with $\matr{\Psi}$ of the form
\eqref{eq:2.10} as \emph{Givens rotations}, and to maximizers of $h_{(i,j),\matr{U}}$ as
\emph{Jacobi rotations}. 

\begin{remark}
In the real-valued setting, when solving the optimization model
\eqref{cost-fn-complex-l1-diag-real}, we need to employ the following
\emph{Givens rotations}:
\begin{equation}\label{eq:Given_rotations}
\matr{\Psi}(c,s)
=
\begin{bmatrix}
c & -s \\
s & c
\end{bmatrix}
=
\begin{bmatrix}
\cos\theta & -\sin\theta  \\
\sin\theta & \cos\theta
\end{bmatrix},
\end{equation}
where $c \geq0$ and $c^2 + s^2 = 1$.
\end{remark}

As a nonsmooth variant of the Jacobi-G algorithm in \cite{IshtAV13:simax,LUC2017globally,ULC2019}, we propose the following \emph{Subgradient-based Jacobi-type} (Jacobi-SubG) algorithm to maximize $f$. 
Compared with the smooth Jacobi-G framework, our algorithm includes
an additional refinement step using
Subalgorithm~\ref{alg:nonvanishing-safeguard}.
This safeguard enables us to establish global convergence under
a simpler initialization condition; see
\cref{subsec:conver_gence_general}. 
At iteration $k$, write
$h_k\eqdef h_{(i_k,j_k),\matr{U}_{k-1}}$.
For $s_1^2+s_2^2<1$, define
\[
\widetilde h_k(s_1,s_2)
\eqdef h_k\bigl(\matr{\Psi}(\sqrt{1-s_1^2-s_2^2},s_1,s_2)\bigr).
\]
We write $\partial h_k(\matr{I}_2)$ for
$\partial\widetilde h_k(0,0)$.
In the real case, define
$\widetilde h_k(s)\eqdef h_k(\matr{\Psi}(\sqrt{1-s^2},s))$
for $|s|<1$ and write $\partial h_k(\matr{I}_2)$ for
$\partial\widetilde h_k(0)$.
We use the Euclidean norm for these subgradient vectors.

\begin{algorithm}
\caption{Jacobi-SubG Algorithm}\label{Jacobi-SubG-general}
\begin{algorithmic}[1]
\STATE \textbf{Input:} $f$ in \eqref{cost-fn-complex-l1-diag},
$0<\delta\leq 2/\sqrt{n(n-1)}$, and
$\matr{U}_0\in\UN{n}$ with $g_p(\matr{U}_0)\neq0$ for all $p$.
\STATE \textbf{Output:} iterates $\{\matr{U}_k\}$.
\FOR{$k=1,2,\ldots$}
\STATE \textbf{Step I:}
Choose $(i_k,j_k)$ such that there exists
  $\vect{\xi}\in
  \partial h_k(\matr{I}_2)$ satisfying
\begin{equation}\label{inequality-gra-based-2}
\|\vect{\xi}\|\geq
\delta\dist(\vect{0},\partial_{\mathcal R}f(\matr{U}_{k-1})).
\end{equation}
\vspace{-\baselineskip}
\STATE \textbf{Step II:} Generate $\widehat{\matr{\Psi}}_k$ using Subalgorithm CG (Candidate Generation). 
\STATE \textbf{Step III:}  Refine $\widehat{\matr{\Psi}}_k$ using
Subalgorithm~\ref{alg:nonvanishing-safeguard} (Candidate Refinement) to obtain $\matr{\Psi}_k$.
\STATE \textbf{Step IV:}  Update
$\matr{U}_k=\matr{U}_{k-1}\Gmat{i_k}{j_k}{\matr{\Psi}_k}$.
\ENDFOR
\end{algorithmic}
\end{algorithm}

\begingroup
\floatname{algorithm}{Subalgorithm}
\renewcommand{\thealgorithm}{CR}
\begin{algorithm}
\caption{Candidate Refinement}
\label{alg:nonvanishing-safeguard}
\begin{algorithmic}[1]
\STATE \textbf{Input:} $\matr{U}_{k-1}$ with $g_p(\matr{U}_{k-1})\neq0$
for all $p$, $(i_k,j_k)$, $h_k$, and a candidate rotation
$\widehat{\matr{\Psi}}_k$ with
$h_k(\widehat{\matr{\Psi}}_k)\geq h_k(\matr{I}_2)$.
\STATE \textbf{Output:} an accepted rotation $\matr{\Psi}_k$.
\STATE Set
$\widehat{\Delta}_k
=h_k(\widehat{\matr{\Psi}}_k)-h_k(\matr{I}_2)$ and
$\widehat{\matr{U}}_k
=\matr{U}_{k-1}\Gmat{i_k}{j_k}{\widehat{\matr{\Psi}}_k}$.
\IF{$\widehat{\Delta}_k=0$}
\STATE Return $\matr{I}_2$.
\ELSIF{$g_{i_k}(\widehat{\matr{U}}_k)
g_{j_k}(\widehat{\matr{U}}_k)\neq0$}
\STATE Return $\widehat{\matr{\Psi}}_k$.
\ENDIF
\STATE Let $\widehat\theta_k\in[0,\pi/2]$ be the angle of
$\widehat{\matr{\Psi}}_k$ in \eqref{eq:2.10}.
Let $\matr{\Psi}_k(t)$ have angle $t\widehat\theta_k$ and the same phase, and set
$\matr{U}_k(t)
=\matr{U}_{k-1}\Gmat{i_k}{j_k}{\matr{\Psi}_k(t)}$.
\STATE Choose the smallest $m\geq1$ such that $t_m=1-2^{-m}$ satisfies
\[
\begin{aligned}
g_{i_k}(\matr{U}_k(t_m))g_{j_k}(\matr{U}_k(t_m))\neq0,\ \
h_k(\matr{\Psi}_k(t_m))-h_k(\matr{I}_2)
\geq\tfrac12\widehat{\Delta}_k.
\end{aligned}
\]
\STATE Return $\matr{\Psi}_k(t_m)$.
\end{algorithmic}
\end{algorithm}
\endgroup

\begin{proposition}
\label{prop:admissible-pair}
In \cref{Jacobi-SubG-general}, for every
$\matr{U}_{k-1}\in\UN{n}$, there exist an index pair
$(i_k,j_k)$ and
$\vect{\xi}\in
\partial h_k(\matr{I}_2)$
satisfying \eqref{inequality-gra-based-2}.
\end{proposition}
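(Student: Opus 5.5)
The plan is to build the required pair and subgradient directly from the selected Riemannian subgradient $\matr{G}_{\mathrm R}(\matr{U}_{k-1})$ in \eqref{eq:selected-riemannian-subgradient}. Since $\matr{G}_{\mathrm R}(\matr{U}_{k-1})\in\partial_{\mathcal R}f(\matr{U}_{k-1})$, we have
\[
\|\matr{G}_{\mathrm R}(\matr{U}_{k-1})\|_F\geq\dist(\vect{0},\partial_{\mathcal R}f(\matr{U}_{k-1})).
\]
It therefore suffices to find a pair $(i,j)$ and some $\vect{\xi}\in\partial h_k(\matr{I}_2)$ with
\[
\|\vect{\xi}\|\geq\delta\|\matr{G}_{\mathrm R}(\matr{U}_{k-1})\|_F .
\]
Write $\matr{G}_{\mathrm R}(\matr{U}_{k-1})=\matr{U}_{k-1}\matr{K}$ with $\matr{K}^{\H}=-\matr{K}$, so that $\|\matr{G}_{\mathrm R}\|_F=\|\matr{K}\|_F$.

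\textbf{Step 1: the diagonal of $\matr{K}$ vanishes.} I would pair $\matr{G}_{\mathrm R}$ with the tangent directions $\matr{U}_{k-1}(\mathrm{i}\matr{E}_{pp})$, which are the velocities of the phase-invariance curves $\matr{U}_{k-1}\operatorname{diag}(1,\ldots,e^{\mathrm{i}t},\ldots,1)$. Along such a curve each $g_q$ is multiplied by a unimodular factor $e^{\mathrm{i}m_qt}$, where $m_q$ is an integer. The corresponding term of the pairing is then $\operatorname{Re}\bigl(\widehat\alpha_q^{*}\,\mathrm{i}m_q g_q\bigr)$. If $g_q\neq0$ this equals $m_q\operatorname{Re}(\mathrm{i}|g_q|)=0$; if $g_q=0$ then $\widehat\alpha_q=0$ and the term vanishes as well. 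Hence $\operatorname{Im}K_{pp}=0$ for every $p$. Since $K_{pp}$ is purely imaginary, $K_{pp}=0$.

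\textbf{Step 2: a good pair exists by pigeonhole.} Because the diagonal of $\matr{K}$ is zero, $\|\matr{K}\|_F^2=2\sum_{i<j}|K_{ij}|^2$. So some pair $(i,j)$ with $i<j$ satisfies
\[
|K_{ij}|^2\geq\frac{\|\matr{K}\|_F^2}{n(n-1)} .
\]

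\textbf{Step 3: link $\matr{K}$ to a subgradient of $h_k$.} Here $\widetilde h_k=\ell\circ\vect{g}\circ\Phi$, where $\Phi(s_1,s_2)=\matr{U}_{k-1}\Gmat{i}{j}{\matr{\Psi}(\sqrt{1-s_1^2-s_2^2},s_1,s_2)}$ is smooth near $0$. Since $\ell$ is convex and finite, the chain rule (as in \eqref{eq:tensor-subdiff}) shows that
\[
\vect{\xi}\eqdef D\Phi(0)^*\Bigl[\sum_p Dg_p(\matr{U}_{k-1})^*[\widehat\alpha_p]\Bigr]\in\partial\widetilde h_k(0,0).
\]
The image of $D\Phi(0)$ lies in $T_{\matr{U}_{k-1}}\UN{n}$, and $D\Phi(0)[e_1]$ and $D\Phi(0)[e_2]$ equal $\matr{U}_{k-1}$ times the skew-Hermitian matrices supported on $(i,j),(j,i)$ with entries $\mp1$ and $\mp\mathrm{i}$. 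We may therefore replace the Euclidean gradient by its tangent projection. This gives $\xi_1=\langle \matr{K},\matr{E}_{ji}-\matr{E}_{ij}\rangle$ and a similar formula for $\xi_2$. Computing these, up to signs, $\xi_1=2\operatorname{Re}K_{ji}$ and $\xi_2=\pm2\operatorname{Im}K_{ji}$, so $\|\vect{\xi}\|=2|K_{ij}|$.

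\textbf{Conclusion.} Combining Steps 2 and 3,
\[
\|\vect{\xi}\|=2|K_{ij}|\geq\frac{2}{\sqrt{n(n-1)}}\|\matr{K}\|_F\geq\delta\|\matr{G}_{\mathrm R}\|_F\geq\delta\dist(\vect{0},\partial_{\mathcal R}f(\matr{U}_{k-1})),
\]
which is \eqref{inequality-gra-based-2}. The real orthogonal case is identical, with only $\xi_1$ present and $\matr{K}$ real skew-symmetric, whose diagonal vanishes automatically.

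\textbf{Main obstacle.} The delicate step is Step 3. One must make sure that the chain rule yields the specific element $\vect{\xi}$ built from the same coefficients $\widehat\alpha_p$, including the indices where $g_p=0$ and $\widehat\alpha_p=0$ is chosen, which is still an element of $\set{S}(0)$. One must also pin down the constant relating $\|\vect{\xi}\|$ and $|K_{ij}|$, which depends on the normalization of $\langle\cdot,\cdot\rangle$ and on the parametrization \eqref{eq:2.10}. The threshold $2/\sqrt{n(n-1)}$ on $\delta$ suggests that the factor $2$ is correct, and I would verify it by direct differentiation of $\Phi$ at $0$.
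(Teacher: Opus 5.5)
Your proof is correct and follows the same route as the paper. Phase covariance kills the diagonal of $\matr{K}$; the paper phrases this as orthogonality of the subgradient to the phase tangent space. The Jacobi blocks then carry the full norm with $\|\vect{\xi}_{ij}\|=2|K_{ij}|$, and pigeonholing over the $n(n-1)/2$ pairs gives the factor $2/\sqrt{n(n-1)}$. The one difference is that you use the explicit selected subgradient $\matr{G}_{\mathrm R}(\matr{U}_{k-1})$ together with $\|\matr{G}_{\mathrm R}\|_F\geq\dist(\vect{0},\partial_{\mathcal R}f(\matr{U}_{k-1}))$, while the paper uses the minimum-norm element $\matr{\zeta}^\ast$. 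Both are valid, and yours avoids the existence argument for $\matr{\zeta}^\ast$ and yields a directly computable pair.
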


\subsection{Convergence analysis}\label{subsec:conver_gence_general}

We first record the properties of
the safeguard; finite termination follows from analyticity along the
rotation path and continuity of the objective.

\begin{lemma}
\label{lem:nonvanishing-safeguard}
Suppose $g_p(\matr{U}_{k-1})\neq0$ for all $p$ and
$\widehat{\Delta}_k\geq0$.
Then Subalgorithm~\ref{alg:nonvanishing-safeguard} terminates after
finitely many trials. Its output satisfies $g_p(\matr{U}_k)\neq0$
for all $p$,
$$h_k(\matr{\Psi}_k)-h_k(\matr{I}_2)\geq
\tfrac12\widehat{\Delta}_k,\ \ \|\matr{\Psi}_k-\matr{I}_2\|_F
\leq\|\widehat{\matr{\Psi}}_k-\matr{I}_2\|_F.$$
Moreover, $h_k(\matr{\Psi}_k)=h_k(\matr{I}_2)$ implies
$\matr{\Psi}_k=\matr{I}_2$.
\end{lemma}

The safeguard preserves a candidate sufficient-ascent estimate up to
a factor of two. The following theorem combines this fact with the
nonsmooth {\L}ojasiewicz inequality.

\begin{theorem}[Global convergence of Jacobi-SubG]
\label{thm:jacobi-conditional}
Consider \cref{Jacobi-SubG-general} applied to
\eqref{cost-fn-complex-l1-diag}, with $g_p(\matr{U}_0)\neq0$
for all $p$.
Suppose that every candidate satisfies,
for all sufficiently large $k$,
\begin{equation}\label{eq:subproblem_sufficient_ascent}
h_k(\widehat{\matr{\Psi}}_k)-h_k(\matr{I}_2)
\geq
\varepsilon_{\mathrm{sub}}\,
\dist\!\left(
\vect{0},\partial h_k(\matr{I}_2)
\right)
\|\widehat{\matr{\Psi}}_k-\matr{I}_2\|_F
\end{equation}
for some constant $\varepsilon_{\mathrm{sub}}>0$ independent of $k$.
Then $g_p(\matr{U}_k)\neq0$ for all $p$ and $k$, and the whole
sequence $\{\matr{U}_k\}_{k\geq0}$ converges to a point
$\matr{U}_*\in\UN{n}$.
\end{theorem}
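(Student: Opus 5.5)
We apply \cref{prop:riemannian-KL-convergence} to $\varphi=-f$ on $\set{M}=\UN{n}$, viewed as a closed embedded submanifold of $\RR^{2n^2}$. We take exponents $\alpha=\beta=1$ and $\eta=0$, so that (A2) is automatic.

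\textbf{Step 1: nonvanishing contractions and monotonicity.} By induction on $k$ using \cref{lem:nonvanishing-safeguard}, $g_p(\matr{U}_k)\neq0$ for all $p$ and $k$. Hence the refinement is always well defined. Moreover $f(\matr{U}_k)-f(\matr{U}_{k-1})=h_k(\matr{\Psi}_k)-h_k(\matr{I}_2)\geq\tfrac12\widehat\Delta_k\geq0$. Since $\UN{n}$ is compact, an accumulation point $\matr{U}_*$ exists.

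\textbf{Step 2: condition (A3).} If $f(\matr{U}_k)=f(\matr{U}_{k-1})$, then $h_k(\matr{\Psi}_k)=h_k(\matr{I}_2)$. \cref{lem:nonvanishing-safeguard} then gives $\matr{\Psi}_k=\matr{I}_2$, so $\matr{U}_k=\matr{U}_{k-1}$.

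\textbf{Step 3: condition (A1).} This is the main work. We chain
\[
f(\matr{U}_k)-f(\matr{U}_{k-1})\geq\tfrac12\varepsilon_{\mathrm{sub}}\dist(\vect{0},\partial h_k(\matr{I}_2))\,\|\widehat{\matr{\Psi}}_k-\matr{I}_2\|_F .
\]
We then bound each factor.
\begin{itemize}
\item \emph{Step size.} Since $\matr{U}_k-\matr{U}_{k-1}=\matr{U}_{k-1}(\Gmat{i_k}{j_k}{\matr{\Psi}_k}-\matr{I}_n)$, we have $\|\matr{U}_k-\matr{U}_{k-1}\|_F=\|\matr{\Psi}_k-\matr{I}_2\|_F\leq\|\widehat{\matr{\Psi}}_k-\matr{I}_2\|_F$.
\item \emph{Subgradient of $h_k$.} Here we use that all $g_p(\matr{U}_{k-1})\neq0$. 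Then $f$ is smooth (real-analytic) near $\matr{U}_{k-1}$, and $\widetilde h_k$ is smooth near $0$. So $\partial h_k(\matr{I}_2)=\{\nabla\widetilde h_k(0)\}$ and $\partial_{\mathcal R}f(\matr{U}_{k-1})=\{\grad f(\matr{U}_{k-1})\}$ are singletons. Consequently the $\vect{\xi}$ chosen in Step I is the unique element, and $\dist(\vect{0},\partial h_k(\matr{I}_2))=\|\vect{\xi}\|\geq\delta\dist(\vect{0},\partial_{\mathcal R}f(\matr{U}_{k-1}))$.
\end{itemize}
Combining these gives (A1) with $\sigma=\varepsilon_{\mathrm{sub}}\delta/2$.

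\textbf{Step 4: the {\L}ojasiewicz inequality at $\matr{U}_*$.} The limit $\matr{U}_*$ might have some $g_p(\matr{U}_*)=0$, so $f$ need not be smooth there. We note that $f=\sum_p|g_p|$ is the sum of absolute values of real polynomials, so it is semialgebraic, as is $\UN{n}$. Semialgebraic continuous functions restricted to semialgebraic manifolds satisfy the Riemannian nonsmooth {\L}ojasiewicz inequality with some $\mu\in[0,1)$. This follows from \cite{bolte2007lojasiewicz} applied to $-f+\delta_{\UN{n}}$, using \eqref{eq:riemannian-subdiff} to identify the limiting subdifferential of the sum with the Riemannian one; the latter identification is valid by weak convexity. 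Since $\alpha+\eta=1$, the requirement is $\mu<1$, which holds.

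\cref{prop:riemannian-KL-convergence} then yields $\matr{U}_k\to\matr{U}_*$.

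\textbf{Main obstacle.} I expect Step 4 to be the delicate point: justifying the {\L}ojasiewicz inequality for the Riemannian limiting subdifferential at a possibly nonsmooth limit. I would handle it via the semialgebraic KL theory together with the projection formula \eqref{eq:riemannian-subdiff}.
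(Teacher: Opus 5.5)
Your proposal is correct and follows the paper's proof step for step. \cref{lem:nonvanishing-safeguard} gives the nonvanishing contractions and \textup{(A3)}; smoothness at the iterates makes $\partial h_k(\matr{I}_2)$ and $\partial_{\mathcal R}f(\matr{U}_{k-1})$ singletons, so \textup{(A1)} holds with $\alpha=\beta=1$, $\sigma=\varepsilon_{\mathrm{sub}}\delta/2$ and $\eta=0$; and \cite{bolte2007lojasiewicz} applied to $-f+\delta_{\UN{n}}$ supplies some $\mu<1$.

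One correction to Step~4: the identity $\dist(\vect{0},\partial(-f+\delta_{\UN{n}})(\matr{U}))=\dist(\vect{0},\partial_{\mathcal R}(-f)(\matr{U}))$ does not follow from weak convexity and \eqref{eq:riemannian-subdiff}, because $-f$ is not weakly convex (think of $-|z|$). It does hold for any continuous function: straighten $\UN{n}$ by a local diffeomorphism, under which limiting subdifferentials transform exactly. The paper simply asserts this identity.
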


For the fourth- and third-order objectives, Step II of
\cref{Jacobi-SubG-general} uses the corresponding subalgorithms for candidate generation in
\cref{sec:4th-order-sub,sec:3rd-order-sub-complex-real}.
The candidate estimates in
\cref{prop:uniform-ascent-complex4,prop:uniform-ascent-complex3},
respectively, ensure the required sufficient ascent, yielding the
following direct consequence.

\begin{corollary}[Convergence for the cumulant objectives]
\label{thm:jacobi-specialized}
Consider \cref{Jacobi-SubG-general} applied to either
\eqref{eq:hermit_sy_order4-new} or
\eqref{eq:hermit_sy_order3-new}.
Suppose that $g_p(\matr{U}_0)\neq0$ for all $p$ and that every
candidate rotation is generated by the corresponding subalgorithm in
\cref{sec:4th-order-sub,sec:3rd-order-sub-complex-real}.
Then the whole sequence $\{\matr{U}_k\}_{k\geq0}$ converges to
a point $\matr{U}_*\in\UN{n}$.
\end{corollary}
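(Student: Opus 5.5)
The plan is to reduce the corollary to \cref{thm:jacobi-conditional}. Its hypotheses are of two kinds. The initialization hypothesis $g_p(\matr{U}_0)\neq0$ for all $p$ is assumed directly in the corollary. So the only thing left to check is the uniform candidate estimate \eqref{eq:subproblem_sufficient_ascent} for the candidates produced in Step II by the specialized subalgorithms.

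For \eqref{eq:hermit_sy_order4-new} I will invoke \cref{prop:uniform-ascent-complex4}, and for \eqref{eq:hermit_sy_order3-new} I will invoke \cref{prop:uniform-ascent-complex3}. I expect each of these to state the following. Whenever the current iterate has all diagonal contractions nonzero, the candidate $\widehat{\matr{\Psi}}_k$ satisfies $h_k(\widehat{\matr{\Psi}}_k)-h_k(\matr{I}_2)\geq\varepsilon\,\dist(\vect{0},\partial h_k(\matr{I}_2))\|\widehat{\matr{\Psi}}_k-\matr{I}_2\|_F$, with a constant $\varepsilon>0$ depending only on the tensor order and type (real or complex), and not on $k$ or $\matr{U}_{k-1}$.

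The point requiring care is that these propositions may need the nonvanishing of $g_{i_k}(\matr{U}_{k-1})$ and $g_{j_k}(\matr{U}_{k-1})$, so that the elementary function $h_k$ is smooth near $\matr{I}_2$ and its subdifferential is a singleton. I will handle this by induction on $k$ using \cref{lem:nonvanishing-safeguard}. If $g_p(\matr{U}_{k-1})\neq0$ for all $p$, the candidate generated by the subalgorithm satisfies $\widehat\Delta_k\geq0$, since the subalgorithms are ascent methods and $\matr{I}_2$ is always admissible. The refinement step then returns $\matr{\Psi}_k$ with $g_p(\matr{U}_k)\neq0$ for all $p$. Hence every iterate lies in the regime where the propositions apply, and the estimate holds for all $k$ with $\varepsilon_{\mathrm{sub}}=\varepsilon$.

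With \eqref{eq:subproblem_sufficient_ascent} verified, \cref{thm:jacobi-conditional} gives convergence of the whole sequence $\{\matr{U}_k\}$ to some $\matr{U}_*\in\UN{n}$. The main obstacle is only bookkeeping: the propositions must be stated under exactly the hypotheses maintained along the iteration, namely nonzero diagonal contractions and the input requirement $h_k(\widehat{\matr{\Psi}}_k)\geq h_k(\matr{I}_2)$ of \cref{alg:nonvanishing-safeguard}. If instead the propositions include cases where some contraction vanishes, the induction is simply unnecessary.
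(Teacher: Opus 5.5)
Your proposal is correct and matches the paper's proof: because the candidates are nondecreasing, \cref{lem:nonvanishing-safeguard} keeps all contractions nonzero along the iteration, so \cref{prop:uniform-ascent-complex4,prop:uniform-ascent-complex3} apply at every step with $k$-independent constants, and \cref{thm:jacobi-conditional} then gives convergence. The paper adds only two bookkeeping points: nondecrease holds because the selected branch is active at the identity, with any penalty nonnegative and vanishing there (or, for the third-order solver, because of the maximizing-phase initialization), and the estimates for the reduced objectives $h_{k,\mathsf X}$ transfer to $h_k$ because $h_k=C_{k,\mathsf X}+h_{k,\mathsf X}$.
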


\begin{remark}
Although stated for the complex case, all results in this section
remain valid in the real case \eqref{cost-fn-complex-l1-diag-real} under the corresponding assumptions,
with $\UN{n}$ replaced by $\ON{n}$, conjugation omitted, and the
corresponding real subproblem solvers used.
\end{remark}

\section{Solving the subproblems for the fourth-order cumulant tensor case}\label{sec:4th-order-sub}

For an index pair $(i,j)$, let $\tens{W}(\matr{U})^{(i,j)}$
denote the subtensor obtained by restricting every mode to
$\{i,j\}$, with local indices $1$ and $2$ corresponding to
$i$ and $j$, respectively.
For
\(\mathsf X\in\{\mathrm{R4},\mathrm{C4},\mathrm{R3},\mathrm{C3}\}\),
we write \(h_{k,\mathsf X}\) for the corresponding reduced objective
formed from the two selected diagonal entries. Thus
\(h_k=C_{k,\mathsf X}+h_{k,\mathsf X}\), where
\(C_{k,\mathsf X}\) is independent of the Givens parameters.
Consequently, the two functions have the same increments and local
subdifferentials. For functions on the unit sphere \(\Sphere{m}\),
\(\partial_{\Sphere{m}}\) denotes the limiting Riemannian
subdifferential. For matrices, $\|\cdot\|_2$ denotes the spectral norm.
In the subalgorithms below, the starred parameters specify the
candidate $\widehat{\matr{\Psi}}_k$, before refinement by
Subalgorithm~\ref{alg:nonvanishing-safeguard}.
The ascent estimates require only that the two selected contractions
at $\matr{U}_{k-1}$ are nonzero; their constants do not depend on
the magnitudes of these contractions.
By \cref{lem:nonvanishing-safeguard}, this condition holds at every
iteration when $g_p(\matr{U}_0)\neq0$ for all $p$.

\subsection{Fourth-order real case}
For the real fourth-order model \eqref{cost-fn-complex-l1-diag-real},
we first give an explicit representation of the two selected diagonal
contractions.

\begin{lemma}
\label{lem:real4-representation}
Let $\tens{W} \in \RR^{2\times 2\times 2\times 2}$ be symmetric. 
Let $\matr{\Psi}(c,s)\in\ON{2}$ be as in \eqref{eq:Given_rotations}, and define
$\tens{T} = \tens{W} \contr{1} \matr{\Psi}^\T \contr{2} \matr{\Psi}^\T \contr{3} \matr{\Psi}^\T \contr{4} \matr{\Psi}^\T.$
Define $\vect{x} = (c,s)^\T\in \Sphere{1}$ and
\begin{equation}\label{eq:rcs_vect}
\vect{r} = \vect{r}(c,s) \eqdef
[2c^2 - 1, 2c s]^{\T} \in \Sphere{1}.
\end{equation}
Then we have 
$\tenselem{T}_{1111} = \vect{r}(c,s)^\T \matr{\Gamma} \vect{r}(c,s) + \vect{b}^\T \vect{r}(c,s), 
\tenselem{T}_{2222} = \vect{r}(c,s)^\T \matr{\Gamma} \vect{r}(c,s) - \vect{b}^\T \vect{r}(c,s),$ 
where
\begin{align}\label{eq:gamma_matr_b}
\matr{\Gamma} =
\begin{pmatrix}
\frac{\tenselem{W}_{1111} + \tenselem{W}_{2222}}{2}
&
\frac{\tenselem{W}_{1112} - \tenselem{W}_{1222}}{2}
\\
\frac{\tenselem{W}_{1112} - \tenselem{W}_{1222}}{2}
&
\frac{2\tenselem{W}_{1122}+\tenselem{W}_{1111} + \tenselem{W}_{2222}}{4} + \tenselem{W}_{1212}
\end{pmatrix},\ \
\vect{b} =
\begin{pmatrix}
\frac{\tenselem{W}_{1111} - \tenselem{W}_{2222}}{2} \\
\tenselem{W}_{1112} + \tenselem{W}_{1222}
\end{pmatrix}.
\end{align}
\end{lemma}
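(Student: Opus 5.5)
The plan is a direct computation. First we write the two diagonal entries of $\tens{T}$ as binary quartic forms in $(c,s)$. Then we rewrite these forms in the variables $\vect{r}=(r_1,r_2)^\T$ using the double-angle identities. Finally we homogenize the result using $\|\vect{r}\|=1$.

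Since $\tens{T}=\tens{W}\contr{1}\matr{\Psi}^\T\cdots\contr{4}\matr{\Psi}^\T$, we have $\tenselem{T}_{1111}=\sum_{a,b,c',d}\tenselem{W}_{abc'd}\Psi_{a1}\Psi_{b1}\Psi_{c'1}\Psi_{d1}$. The first column of $\matr{\Psi}$ is $\vect{x}=(c,s)^\T$, so symmetry of $\tens{W}$ gives
\[
\tenselem{T}_{1111}=\tenselem{W}_{1111}c^4+4\tenselem{W}_{1112}c^3s+6\tenselem{W}_{1122}c^2s^2+4\tenselem{W}_{1222}cs^3+\tenselem{W}_{2222}s^4 .
\]
The entry $\tenselem{T}_{2222}$ is the same quartic evaluated at the second column $(-s,c)^\T$.

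Next we put $r_1=2c^2-1$ and $r_2=2cs$, so that $c^2=(1+r_1)/2$, $s^2=(1-r_1)/2$ and $cs=r_2/2$. This gives
\[
c^4=\tfrac{(1+r_1)^2}{4},\quad s^4=\tfrac{(1-r_1)^2}{4},\quad c^2s^2=\tfrac{r_2^2}{4},\quad c^3s=\tfrac{(1+r_1)r_2}{4},\quad cs^3=\tfrac{(1-r_1)r_2}{4}.
\]
Substituting and collecting terms, the linear part is $\frac{\tenselem{W}_{1111}-\tenselem{W}_{2222}}{2}r_1+(\tenselem{W}_{1112}+\tenselem{W}_{1222})r_2=\vect{b}^\T\vect{r}$. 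The remaining part contains the constant term $\frac{\tenselem{W}_{1111}+\tenselem{W}_{2222}}{4}$. We replace this constant by $\frac{\tenselem{W}_{1111}+\tenselem{W}_{2222}}{4}(r_1^2+r_2^2)$, using $\vect{r}\in\Sphere{1}$, which makes the part a quadratic form in $\vect{r}$. Its coefficients come out as follows.
\begin{itemize}
\item The $r_1^2$ coefficient is $\frac{\tenselem{W}_{1111}+\tenselem{W}_{2222}}{2}=\Gamma_{11}$.
\item The $r_1r_2$ coefficient is $\tenselem{W}_{1112}-\tenselem{W}_{1222}=2\Gamma_{12}$.
\item The $r_2^2$ coefficient is $\frac{\tenselem{W}_{1111}+\tenselem{W}_{2222}}{4}+\frac32\tenselem{W}_{1122}$. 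Symmetry gives $\tenselem{W}_{1212}=\tenselem{W}_{1122}$, so this equals $\Gamma_{22}$ in \eqref{eq:gamma_matr_b}.
\end{itemize}
Hence $\tenselem{T}_{1111}=\vect{r}^\T\matr{\Gamma}\vect{r}+\vect{b}^\T\vect{r}$.

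For $\tenselem{T}_{2222}$, the substitution $(c,s)\mapsto(-s,c)$ sends $\vect{r}$ to $(2s^2-1,-2sc)^\T=-\vect{r}$. The quadratic part is unchanged under $\vect{r}\mapsto-\vect{r}$ and the linear part changes sign, which gives the second formula.

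The only delicate point is the homogenization step. The constant term from the expansion must be absorbed into the quadratic form through $r_1^2+r_2^2=1$, and this is what produces the particular split of $\Gamma_{22}$ stated in the lemma. Everything else is routine bookkeeping of binomial coefficients.
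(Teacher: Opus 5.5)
Your proposal is correct. The quartic expansion, the double-angle substitution, the homogenization of the constant via $r_1^2+r_2^2=1$, the identification $\tenselem{W}_{1212}=\tenselem{W}_{1122}$, and the $(c,s)\mapsto(-s,c)$ argument sending $\vect{r}\mapsto-\vect{r}$ for $\tenselem{T}_{2222}$ all check out. The paper states this lemma without giving a proof, and your argument is the direct computation that is left implicit there.
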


The reduced objective is therefore
\begin{equation}\label{eq:subpro_hk_real}
h_{k,\mathrm{R4}}(c,s)=h_{k,\mathrm{R4}}(\matr{\Psi})
=2\max\{|\vect{r}(c,s)^\T\matr{\Gamma}\vect{r}(c,s)|,\,
|\vect{b}^\T\vect{r}(c,s)|\},
\end{equation}
because $|a+b|+|a-b|=2\max\{|a|,|b|\}$ for real $a,b$. Here $\matr{\Gamma}$ and $\vect{b}$ are constructed from $\tens{W}(\matr{U}_{k-1})^{(i_k,j_k)}$ by \eqref{eq:gamma_matr_b}. Let $\lambda_1\geq\lambda_2$ be the eigenvalues of $\matr{\Gamma}$, with an orthonormal eigenbasis. The two branches are
\begin{equation}\label{eq:q_func_cs}
q(c,s)=\vect{r}(c,s)^\T\matr{\Gamma}\vect{r}(c,s),\ \ 
\tilde q(c,s)=|q(c,s)|,\ \  \tilde p(c,s)=|\vect{b}^\T\vect{r}(c,s)|.
\end{equation}
Fix $0<\tau<1$ and define
\begin{equation}\label{eq:tildeq_eta_def}
\tilde q_\tau(c,s)=\tilde q(c,s)-\tau(|cs|+s^2).
\end{equation}
Compactness of the orthogonal group and continuity of the transformed tensor
imply a uniform bound \(\|\matr{\Gamma}\|_2\leq M_{\mathrm{R4}}\)
for some fixed \(M_{\mathrm{R4}}>0\).

{
\floatname{algorithm}{Subalgorithm}
\renewcommand{\thealgorithm}{CG-R4}
\begin{algorithm}
\caption{Candidate generation for the reduced objective \eqref{eq:subpro_hk_real}}\label{alg:branch-solver-h}
\begin{algorithmic}[1]
\STATE \textbf{Input:} the elementary function \eqref{eq:subpro_hk_real}, fixed $\upsilon>0$, and $0<\tau<1$.
\STATE \textbf{Output:} a rotation $\matr{\Psi}(c^*,s^*)$.
\IF{$|\Gamma_{11}|\geq|b_1|$}
\STATE Choose the quadratic branch.
\STATE \textbf{if}
\begin{equation}\label{eq:condi_4real}
\lambda_1\lambda_2\geq0,\quad\text{or}\quad
\lambda_1\lambda_2<0\ \text{and}\ 
|\lambda_1+\lambda_2|\geq\upsilon|\Gamma_{12}|,
\end{equation}
\textbf{then} choose a unit eigenvector $\vect{r}^*$ for an eigenvalue of largest absolute value, with $r_1^*\geq0$.
\vspace{-\baselineskip}
\STATE \textbf{else} choose a global maximizer $(c^*,s^*)$ of $\tilde q_\tau$ over $c^2+s^2=1$, $c\geq0$.
\ELSE
\STATE Choose the linear branch and set $\vect{r}^*=\operatorname{sign}(b_1)\vect{b}/\|\vect{b}\|$.
\ENDIF
\STATE Whenever $\vect{r}^*$ was computed, recover $c^*=\sqrt{(1+r_1^*)/2}$ and $s^*=r_2^*/(2c^*)$. Among tied candidates choose one with maximal $c^*$.
\STATE If $h_{k,\mathrm{R4}}(\matr{\Psi}(c^*,s^*))=
h_{k,\mathrm{R4}}(\matr{I}_2)$, set $(c^*,s^*)=(1,0)$.
\STATE Return $\matr{\Psi}(c^*,s^*)$.
\end{algorithmic}
\end{algorithm}
} 

\begin{remark}
In the real fourth-order setting, \eqref{eq:subpro_hk_real} is algebraically
equivalent to the pairwise kurtosis subproblem in
\cite{ge2010efficient}; both yield the same two-branch objective under a
Givens rotation. The method in \cite{ge2010efficient} globally maximizes
both branches and compares their optimal values. In contrast,
\subalgref{alg:branch-solver-h} optimizes the branch active at the
identity, and is followed by a
contraction-preserving refinement. Although this update need not globally
solve every pairwise subproblem, it provides the uniform sufficient ascent
required for the global convergence of \cref{Jacobi-SubG-general}.
\end{remark}

The selected branch is active at the identity, and any penalty is
nonnegative and vanishes there. Hence every candidate is nondecreasing
for the reduced objective.

\begin{proposition}
\label{prop:uniform-ascent-real4}
Let $h_{k,\mathrm{R4}}(c,s)$ be defined as in
\eqref{eq:subpro_hk_real}. Suppose
$g_{i_k}(\matr{U}_{k-1})g_{j_k}(\matr{U}_{k-1})\neq0$.
Then the candidate returned by
\subalgref{alg:branch-solver-h} satisfies
\[
h_{k,\mathrm{R4}}(c^*,s^*)-h_{k,\mathrm{R4}}(1,0)
\geq
\varepsilon_{\mathrm{R4}}\,
\dist\left(\vect{0},
\partial_{\Sphere{1}}h_{k,\mathrm{R4}}(1,0)\right)
\left\|\matr{\Psi}(c^*,s^*)-\matr{I}_2\right\|_F,
\]
where
\(\varepsilon_{\mathrm{R4}}
\eqdef\frac12\min\{\frac12,\frac{\upsilon}{2\sqrt2},
\frac{\tau}{4M_{\mathrm{R4}}}\}>0\) is independent of \(k\).
\end{proposition}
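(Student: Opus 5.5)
The plan is to exploit the fact that, under the hypothesis $g_{i_k}(\matr{U}_{k-1})g_{j_k}(\matr{U}_{k-1})\neq0$, the reduced objective \eqref{eq:subpro_hk_real} is smooth near the identity. The subdifferential is then a singleton that we can compute explicitly. The proof splits according to the branches of \subalgref{alg:branch-solver-h}.

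\textbf{Step 1: reduction to a single active branch.} At $(c,s)=(1,0)$ we have $\vect{r}=\vect{e}_1$, so $\tenselem{T}_{1111}=\Gamma_{11}+b_1$ and $\tenselem{T}_{2222}=\Gamma_{11}-b_1$. These two numbers are nonzero by hypothesis, hence $|\Gamma_{11}|\neq|b_1|$. Consequently exactly one of $\tilde q,\tilde p$ is strictly active at the identity, and its inner value is nonzero. Near $(1,0)$ the function $h_{k,\mathrm{R4}}$ therefore coincides with a real-analytic function, and $\partial_{\Sphere{1}}h_{k,\mathrm{R4}}(1,0)$ is a singleton.

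Differentiating $\vect{r}(\sqrt{1-s^2},s)$ at $s=0$ gives $\vect{r}'(0)=(0,2)^\T$. Hence the subgradient norm is, up to the fixed normalisation of the parametrisation, $8|\Gamma_{12}|$ in the quadratic branch and $4|b_2|$ in the linear branch.

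We also record two elementary comparisons, using $c\geq0$ and $1-c=s^2/(1+c)$:
\[
\|\vect{r}(c,s)-\vect{e}_1\|=2|s|,\qquad \sqrt2|s|\leq\|\matr{\Psi}(c,s)-\matr{I}_2\|_F=2\sqrt{1-c}\leq2|s|.
\]
These let us pass freely between the distances $\|\vect{r}^*-\vect{e}_1\|$, $|s^*|$ and $\|\matr{\Psi}-\matr{I}_2\|_F$. Finally, the reset in the last line of the subalgorithm is harmless: if the candidate is reset to the identity, both sides of the claimed inequality vanish.

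\textbf{Step 2: the linear branch and the penalized quadratic branch.}

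In the linear branch, $\vect{r}^*=\operatorname{sign}(b_1)\vect{b}/\|\vect{b}\|$, so the gain is
\[
2(\|\vect{b}\|-|b_1|)=\frac{2b_2^2}{\|\vect{b}\|+|b_1|}=\|\vect{b}\|\,\|\vect{r}^*-\vect{e}_1\|^2 .
\]
From the same formula, $\|\vect{r}^*-\vect{e}_1\|^2\geq b_2^2/\|\vect{b}\|^2$. Hence the gain is at least $|b_2|\,\|\vect{r}^*-\vect{e}_1\|\geq|b_2|\,\|\matr{\Psi}^*-\matr{I}_2\|_F$, which is $\frac14$ of the subgradient norm times the displacement.

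In the penalized case, global maximality of $\tilde q_\tau$ gives $\tilde q_\tau(c^*,s^*)\geq\tilde q_\tau(1,0)=|\Gamma_{11}|$. Since the active branch is the quadratic one, the gain is therefore at least $2\tau(|c^*s^*|+s^{*2})$. Using $c+|s|\geq1$, this is at least $2\tau|s^*|\geq\tau\|\matr{\Psi}^*-\matr{I}_2\|_F$. Combining with $8|\Gamma_{12}|\leq8M_{\mathrm{R4}}$ yields the constant $\tau/(8M_{\mathrm{R4}})$, in line with $\tau/(4M_{\mathrm{R4}})$ up to the stated factor $\tfrac12$.

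\textbf{Step 3: the eigenvector case (main obstacle).}

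Write $\vect{e}_1=\cos\phi\,\vect{v}+\sin\phi\,\vect{w}$, where $\vect{v}$ is the chosen eigenvector (eigenvalue $\lambda$ of largest modulus) and $\vect{w}$ is the other one (eigenvalue $\lambda'$). Then
\[
\Gamma_{11}=\lambda\cos^2\phi+\lambda'\sin^2\phi,\qquad |\Gamma_{12}|=|\lambda-\lambda'|\,|\sin\phi\cos\phi|,\qquad \|\vect{r}^*-\vect{e}_1\|=2\sin(\phi/2),
\]
where the sign choice $r_1^*\geq0$ ensures $\phi\in[0,\pi/2]$.

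If $\lambda\lambda'\geq0$, the gain is $|\lambda|-|\Gamma_{11}|=|\lambda-\lambda'|\sin^2\phi$. Dividing by $|\Gamma_{12}|\cdot2\sin(\phi/2)$ gives $\cos(\phi/2)/\cos\phi\geq1$, a uniform bound.

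The indefinite case is where I expect the real work. Here $|\lambda|-|\Gamma_{11}|$ can be small relative to $|\lambda-\lambda'|$, and the threshold $|\lambda_1+\lambda_2|\geq\upsilon|\Gamma_{12}|$ must be used. I would split according to whether $\Gamma_{11}$ has the sign of $\lambda$ or of $\lambda'$.

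When $\Gamma_{11}$ has the sign of $\lambda$, the gain equals $|\lambda-\lambda'|\sin^2\phi$ and we argue as above.

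When $\Gamma_{11}$ has the sign of $\lambda'$, the gain is
\[
|\lambda|-|\Gamma_{11}|=(|\lambda|-|\lambda'|)+|\lambda-\lambda'|\cos^2\phi\cdot(\text{nonneg.})\geq|\lambda_1+\lambda_2| .
\]
This is at least $\upsilon|\Gamma_{12}|$, and $\|\vect{r}^*-\vect{e}_1\|\leq\sqrt2$. This produces the constant $\upsilon/(2\sqrt2)$.

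Collecting the three constants and dividing by the factor two (to absorb the normalisation of the subgradient) gives $\varepsilon_{\mathrm{R4}}$. The tie-breaking rule "maximal $c^*$" only reduces $\|\matr{\Psi}^*-\matr{I}_2\|_F$, so it preserves all the bounds.
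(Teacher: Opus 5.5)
Your proposal is correct and follows essentially the paper's route. It argues branch by branch: an explicit computation for the linear branch with $\vect{r}^*=\operatorname{sign}(b_1)\vect{b}/\|\vect{b}\|$, comparison with the identity via global maximality for the penalized fallback, and for the eigenvector case a Rayleigh-quotient gain estimate split by the sign of $\Gamma_{11}$ relative to the selected eigenvalue, invoking $|\lambda_1+\lambda_2|\geq\upsilon|\Gamma_{12}|$ only in the opposite-sign subcase. This is what the paper's auxiliary lemmas do, in the coordinates $\vect{r}^*=(x,y)^\T$ rather than your angle $\phi$. Your factor-of-two bookkeeping is loose: Step~2 measures against the full subgradient norm and $\|\matr{\Psi}^*-\matr{I}_2\|_F$, while Step~3 measures against branch quantities. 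Still, your four resulting constants ($\tfrac14$, $\tfrac{\tau}{8M_{\mathrm{R4}}}$, $\tfrac14$, $\tfrac{\upsilon}{4\sqrt2}$) are each at least $\varepsilon_{\mathrm{R4}}$, so the claimed estimate follows.
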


We next give the auxiliary estimates, beginning with a Rayleigh-quotient
bound related to \cite[Lemma 5.9]{LUC2017globally}.

\begin{lemma}
\label{lem:real4-rayleigh-ascent}
Let $q(c,s)$ be defined as in \eqref{eq:q_func_cs}. 
Let \(\vect{r}^{\ast}=\vect{r}(c^\ast,s^\ast)\in\Sphere{1}\) be a unit eigenvector corresponding to the
largest eigenvalue of \(\matr{\Gamma}\). 
We choose the sign of this eigenvector
such that its first component is nonnegative, and then choose
\((c^\ast,s^\ast)\in \Sphere{1}\) satisfying \(c^\ast\ge 0\). 
Then the following estimate holds with $\varepsilon=1/2$,
independently of $\matr{\Gamma}$: 
\begin{equation}\label{eq:real4-rayleigh-ascent}
q(c^*,s^*)-q(1,0)
\geq
\varepsilon 
\|\grad q(1,0)\|
\sqrt{1-c^*}. 
\end{equation}
\end{lemma}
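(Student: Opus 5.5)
The plan is to reduce everything to the eigen-decomposition of the symmetric $2\times 2$ matrix $\matr{\Gamma}$ and an angle parametrization of $\Sphere{1}$.

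\textbf{Setup.} Write $(c,s)=(\cos\theta,\sin\theta)$, so that $\vect{r}(c,s)=(\cos2\theta,\sin2\theta)^\T$ and $q=\vect{r}^\T\matr{\Gamma}\vect{r}$.

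\textbf{Step 1: the gradient at the identity.} At $(1,0)$ the unit tangent of $\Sphere{1}$ is the $s$-direction, which corresponds to $d/d\theta$ at $\theta=0$. Since $\frac{d}{d\theta}\vect{r}=2(-\sin2\theta,\cos2\theta)^\T$, we get
\[
\frac{d}{d\theta}q\big|_{\theta=0}=2\,\vect{r}^\T\matr{\Gamma}\,\frac{d\vect{r}}{d\theta}\Big|_{\theta=0}=4\Gamma_{12},
\]
so $\|\grad q(1,0)\|=4|\Gamma_{12}|$. If $\Gamma_{12}=0$, the right-hand side of \eqref{eq:real4-rayleigh-ascent} vanishes. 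The left-hand side is $\lambda_1-\Gamma_{11}\ge 0$ by the Rayleigh quotient, so this case is trivial, and we assume $\Gamma_{12}\neq0$ from now on.

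\textbf{Step 2: spectral expressions.} Write $\vect{r}^*=(\cos\phi,\sin\phi)^\T$ with $\cos\phi\ge0$, and set $\Delta=\lambda_1-\lambda_2\ge0$. Expanding $\vect{e}_1$ in the eigenbasis $\{\vect{r}^*,(\vect{r}^*)^\perp\}$ gives
\[
\Gamma_{11}=\lambda_1\cos^2\phi+\lambda_2\sin^2\phi,\qquad
|\Gamma_{12}|=\Delta\,|\cos\phi\,\sin\phi|.
\]
Since $q(c^*,s^*)=\lambda_1$ and $q(1,0)=\Gamma_{11}$, the left-hand side of \eqref{eq:real4-rayleigh-ascent} equals $\Delta\sin^2\phi$.

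\textbf{Step 3: controlling $\sqrt{1-c^*}$.} With $x=\cos\phi\in[0,1]$, the recovery formula gives $c^*=\sqrt{(1+x)/2}\ge 1/\sqrt2$, so $1-c^{*2}=(1-x)/2$. Using $1-x=\sin^2\phi/(1+x)$, we obtain
\[
1-c^*=\frac{1-c^{*2}}{1+c^*}=\frac{\sin^2\phi}{2(1+x)(1+c^*)}.
\]

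\textbf{Step 4: comparison.} With $\varepsilon=1/2$, the right-hand side of \eqref{eq:real4-rayleigh-ascent} becomes
\[
2|\Gamma_{12}|\sqrt{1-c^*}=\frac{2\Delta\, x\,\sin^2\phi}{\sqrt{2(1+x)(1+c^*)}}.
\]
So the claim reduces to the scalar inequality $4x^2\le 2(1+x)(1+c^*)$ on $[0,1]$. This holds because $4x^2\le 4x$ on $[0,1]$, while $2(1+x)(1+c^*)\ge 2(1+x)(1+1/\sqrt2)>3.4(1+x)\ge 4x$.

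The only delicate point is Step 3. A cruder bound such as $1-c^*\le 1-c^{*2}$ loses a factor of $\sqrt2$ and fails to give $\varepsilon=1/2$. One must keep the factor $1+c^*\ge1+1/\sqrt2$, which is available because the sign normalization $r_1^*\ge0$ forces $c^*\ge1/\sqrt2$. With that in place, the remaining steps are direct computations, and the constant is independent of $\matr{\Gamma}$ because $\Delta$ cancels.
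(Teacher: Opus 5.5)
Your proof is correct and is essentially the paper's argument. The spectral decomposition gives the gain $\Delta\sin^2\phi$ and gradient norm $4\Delta|\cos\phi\sin\phi|$, which are the paper's $Dy^2$ and $4D|xy|$. Your scalar inequality $4x^2\le 2(1+x)(1+c^*)$ is exactly the paper's $x\le c^*\sqrt{1+c^*}$ after substituting $2(1+x)=4(c^*)^2$.

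One correction to your closing remark: nothing delicate happens in Step~3. Since $2(1+x)-4x^2=2(1-x)(1+2x)\ge 0$ on $[0,1]$, the cruder bound $1+c^*\ge 1$ already gives $\varepsilon=1/2$; this is all the paper uses, via $\sqrt{1+c^*}\ge 1$.
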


\begin{proof}
Use the parametrization \eqref{eq:rcs_vect}, and write $\vect{r}^*=(x,y)^\T$, $D=\lambda_1-\lambda_2\geq0$. The spectral decomposition gives $q(c^*,s^*)-q(1,0)=Dy^2$ and $\|\grad q(1,0)\|=4D|xy|$. If $D=0$ or $y=0$, the assertion follows immediately. Otherwise $x=2(c^*)^2-1\geq0$, so $c^*\in[1/\sqrt2,1]$ and $|y|=2c^*\sqrt{1+c^*}\sqrt{1-c^*}$. Since $x\leq c^*\leq c^*\sqrt{1+c^*}$, we have $y^2\geq2|xy|\sqrt{1-c^*}$. Thus \eqref{eq:real4-rayleigh-ascent} holds with $\varepsilon=1/2$, independently of $\matr{\Gamma}$.
\end{proof}

For the quadratic branch, the spectral separation condition \eqref{eq:condi_4real} controls a possible change of sign between the initial Rayleigh quotient and the selected extremal eigenvalue.

\begin{lemma}
\label{lem:real4-absolute-rayleigh-ascent}
Let $\tilde{q}(c,s)$ be defined as in \eqref{eq:q_func_cs}. 
Suppose that the condition \eqref{eq:condi_4real} holds.   
Let \(\vect{r}^{\ast}=\vect{r}(c^\ast,s^\ast)\in\Sphere{1}\) be a unit eigenvector of \(\matr{\Gamma}\) corresponding to an eigenvalue
\(\lambda_{\ast}\) of largest absolute value. 
We choose the sign of this eigenvector
such that its first component is nonnegative, and then choose
\((c^\ast,s^\ast)\in \Sphere{1}\) satisfying \(c^\ast\ge 0\). 
Then, we have that 
\begin{equation}\label{eq:real4-absolute-rayleigh-ascent}
\tilde{q}(c^*,s^*)-\tilde{q}(1,0)
\geq
\min\left\{
\frac{1}{2},
\frac{\upsilon}{2\sqrt{2}}
\right\}
\dist\left(\vect{0},\partial_{\Sphere{1}} \tilde{q}(1,0)\right)
\sqrt{1-c^*},
\end{equation}
where $\upsilon$ is as in the condition \eqref{eq:condi_4real}. 
\end{lemma}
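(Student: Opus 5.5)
The plan is to reduce to the angular parametrization of Lemma~\ref{lem:real4-rayleigh-ascent} and split according to the sign of $q(1,0)=\Gamma_{11}$.

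\textbf{Reductions.} Replacing $\matr{\Gamma}$ by $-\matr{\Gamma}$ changes neither $\tilde q$, nor condition \eqref{eq:condi_4real}, nor the set of eigenvectors. Hence we may assume without loss of generality that the eigenvalue of largest absolute value is $\lambda_\ast=\lambda_1\geq|\lambda_2|\geq0$.

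\textbf{Subdifferential at $(1,0)$.} Write $\vect{r}=(\cos2\theta,\sin2\theta)$. Then $\tfrac{d}{d\theta}q|_{\theta=0}=4\Gamma_{12}$, so $\|\grad q(1,0)\|=4|\Gamma_{12}|$. In eigen-coordinates $\vect{r}^\ast=(x,y)$ this equals $4D|xy|$, as in the proof of Lemma~\ref{lem:real4-rayleigh-ascent}.

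Since $q$ is smooth and $|\cdot|$ is convex, the chain rule on $\Sphere{1}$ gives the following.
\begin{itemize}
\item If $\Gamma_{11}\neq0$, then $\partial_{\Sphere{1}}\tilde q(1,0)=\{\operatorname{sign}(\Gamma_{11})\grad q(1,0)\}$, so the distance is $4|\Gamma_{12}|$.
\item If $\Gamma_{11}=0$, the subdifferential is $[-1,1]\cdot\grad q(1,0)$, which contains $\vect{0}$. The right-hand side is then zero, and the claim holds because $\tilde q(c^\ast,s^\ast)=|\lambda_1|\geq0=\tilde q(1,0)$.
\end{itemize}
In every case the distance is at most $4|\Gamma_{12}|$. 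Also, $r_1^\ast\geq0$ forces $c^\ast\geq1/\sqrt2$.

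\textbf{Case $\Gamma_{11}>0$.} Then $\tilde q(c^\ast,s^\ast)-\tilde q(1,0)=\lambda_1-\Gamma_{11}=q(c^\ast,s^\ast)-q(1,0)$. Here $\vect{r}^\ast$ is an eigenvector of the largest eigenvalue $\lambda_1$, so Lemma~\ref{lem:real4-rayleigh-ascent} applies directly with constant $1/2$. If $\lambda_1=\lambda_2$, both sides vanish.

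\textbf{Case $\Gamma_{11}<0$ (sign change).} This is the step where condition \eqref{eq:condi_4real} is needed, and it is the main obstacle.

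First, $\lambda_2\leq\Gamma_{11}<0$, so $\lambda_1\geq|\lambda_2|>0$ and $\lambda_1\lambda_2<0$. The first alternative of \eqref{eq:condi_4real} is therefore impossible, and the second must hold: $|\lambda_1+\lambda_2|\geq\upsilon|\Gamma_{12}|$.

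The increment is
\[
\lambda_1-|\Gamma_{11}|=\lambda_1+\Gamma_{11}\geq\lambda_1+\lambda_2=|\lambda_1+\lambda_2|\geq\upsilon|\Gamma_{12}|.
\]
The required right-hand side is
\[
\frac{\upsilon}{2\sqrt2}\cdot4|\Gamma_{12}|\sqrt{1-c^\ast}=\sqrt2\,\upsilon|\Gamma_{12}|\sqrt{1-c^\ast}.
\]
Since $c^\ast\geq1/\sqrt2$, we have $\sqrt2\sqrt{1-c^\ast}\leq\sqrt{2-\sqrt2}<1$. Hence the increment dominates the right-hand side, and the bound holds with constant $\upsilon/(2\sqrt2)$.

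Combining the cases gives \eqref{eq:real4-absolute-rayleigh-ascent} with the constant $\min\{1/2,\upsilon/(2\sqrt2)\}$.
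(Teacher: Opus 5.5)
Your proof is correct and follows essentially the same route as the paper. You normalize the sign of $\lambda_\ast$ up front via $\matr{\Gamma}\mapsto-\matr{\Gamma}$, where the paper instead applies \cref{lem:real4-rayleigh-ascent} to $\operatorname{sign}(\lambda_\ast)\matr{\Gamma}$; you use that lemma when $\Gamma_{11}$ and $\lambda_\ast$ have the same sign; and in the sign-change case you combine the Rayleigh bound $\lambda_2\le\Gamma_{11}$, the second alternative of \eqref{eq:condi_4real}, and $c^\ast\ge 1/\sqrt2$, exactly as the paper does (your separate chain-rule treatment of $\Gamma_{11}=0$ is only a cosmetic difference from the paper's global-minimum argument).
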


\begin{proof}
Put $d_0=\dist(\vect{0},\partial_{\Sphere{1}}\tilde q(1,0))$. Differentiating $q(\cos\theta,\sin\theta)$ at zero gives $4\Gamma_{12}$, hence $d_0\leq4|\Gamma_{12}|$. Equality holds if $\Gamma_{11}\ne0$; if $\Gamma_{11}=0$, the identity is a global minimum of the nonnegative function $|q|$, so $d_0=0$.
If $\Gamma_{11}\lambda_*\geq0$ and $\lambda_*\ne0$, apply \cref{lem:real4-rayleigh-ascent} to $\sigma\matr{\Gamma}$, where $\sigma=\operatorname{sign}(\lambda_*)$. Its largest eigenvalue is $|\lambda_*|$ and its initial Rayleigh quotient is $|\Gamma_{11}|$. This gives a gain at least $\frac12d_0\sqrt{1-c^*}$. The case $\lambda_*=0$ is immediate because $\matr{\Gamma}=0$.
If $\Gamma_{11}\lambda_*<0$, the eigenvalues have opposite signs. The Rayleigh bounds $\lambda_2\leq\Gamma_{11}\leq\lambda_1$ and the choice of $\lambda_*$ imply
$|\lambda_*|-|\Gamma_{11}|\geq|\lambda_1+\lambda_2|\geq\upsilon|\Gamma_{12}|$.
Since $c^*\geq1/\sqrt2$, we have $\sqrt{1-c^*}\leq1/\sqrt2$. The last lower bound is therefore at least $\frac{\upsilon}{2\sqrt2}d_0\sqrt{1-c^*}$. The two cases prove \eqref{eq:real4-absolute-rayleigh-ascent}.
\end{proof}

\begin{lemma}
\label{lem:real4-penalized-ascent}
Let \(\tilde{q}_{\tau}(c,s)\) be defined as in \eqref{eq:tildeq_eta_def}. Then \\
(i) a global maximizer $(c^*,s^*)$ of
\(\tilde{q}_{\tau}\), with $c^*\geq 0$, can be obtained from a finite
algebraic candidate set.\\
(ii) we have that 
\begin{equation}\label{eq:real4-penalized-ascent}
\tilde q(c^*,s^*)-\tilde q(1,0)
\geq
\frac{\tau}{4M_{\mathrm{R4}}}
\dist
\left(
\vect{0},\partial_{\Sphere{1}}\tilde q(1,0)
\right)
\sqrt{1-c^*},
\end{equation}
where $\tau$ is as in \eqref{eq:tildeq_eta_def}. 
\end{lemma}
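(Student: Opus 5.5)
Part (i) and part (ii) are handled separately; (ii) compares $\tilde q_\tau$ at the maximizer with its value at the identity.

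\emph{Part (i).} Parametrize $(c,s)=(\cos\theta,\sin\theta)$ with $\theta\in[-\pi/2,\pi/2]$. Then $q$ is a trigonometric polynomial in $2\theta$, since $\vect r=(\cos2\theta,\sin2\theta)^\T$. The penalty equals $\tau(\tfrac12|\sin2\theta|+\sin^2\theta)$. Split the arc into the pieces where $q$ and $s$ have constant sign. On each piece, $\tilde q_\tau$ is a smooth trigonometric polynomial, so its critical points are roots of a polynomial equation in $t=\tan\theta$. A global maximizer therefore lies in the finite set consisting of:
\begin{itemize}
\item[(a)] these roots;
\item[(b)] the points where $s=0$ or $q=0$, which are roots of quadratics in $\vect r$;
\item[(c)] the endpoints $c=0$.
\end{itemize}
Among maximizers we take one with $c^*\geq0$.

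\emph{Part (ii), the inequality.} Since $\tilde q_\tau(1,0)=\tilde q(1,0)$, maximality gives
\[
\tilde q(c^*,s^*)-\tilde q(1,0)\geq \tau(|c^*s^*|+(s^*)^2)\geq0 .
\]
It remains to bound $|cs|+s^2$ below by $\sqrt{1-c}$, up to a factor involving $d_0/M_{\mathrm{R4}}$.

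\emph{Bounding $d_0$.} As in the proof of \cref{lem:real4-absolute-rayleigh-ascent}, put $d_0=\dist(\vect 0,\partial_{\Sphere1}\tilde q(1,0))$. Then $d_0\leq4|\Gamma_{12}|\leq4M_{\mathrm{R4}}$, so $d_0/(4M_{\mathrm{R4}})\leq1$. It therefore suffices to show
\[
|c^*s^*|+(s^*)^2\geq\sqrt{1-c^*}\cdot\frac{d_0}{4M_{\mathrm{R4}}} .
\]

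\emph{Reduction to an elementary inequality.} If $d_0=0$ the claim is trivial, so assume $d_0>0$. Then $\Gamma_{11}\neq0$ and $\Gamma_{12}\neq0$. In that case the identity is not a critical point of $\tilde q$, and we expect it to give a nonzero maximizer. Concretely, I would aim for the stronger elementary bound
\[
|cs|+s^2\geq\tfrac12\sqrt{1-c}\qquad\text{for } c\geq0,
\]
ideally with constant $1$, which would absorb the factor $d_0/(4M_{\mathrm{R4}})\leq1$ at the level of constants.

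\emph{Checking the elementary bound.} With $s^2=(1-c)(1+c)$, we have $s^2\geq 1-c$. For $c$ near $1$, $|cs|=c\sqrt{(1-c)(1+c)}\geq c\sqrt{1-c}$. For $c$ small, $s^2$ is close to $1\geq\sqrt{1-c}$. The two ranges combine to give $|cs|+s^2\geq\sqrt{1-c}$ on $[0,1]$, where the endpoint $c=0$ is handled by $s^2=1$.

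\emph{Main obstacle.} The delicate point is that $(c^*,s^*)=(1,0)$ could itself be the global maximizer even though $d_0>0$. In that case the right-hand side is $0$, because $\sqrt{1-c^*}=0$, so the inequality is fine. So the bound holds in all cases. The step I would check most carefully is whether the stated constant $\tau/(4M_{\mathrm{R4}})$ needs $d_0\leq 4M_{\mathrm{R4}}$ exactly. That inequality is where the uniform bound $\|\matr\Gamma\|_2\le M_{\mathrm{R4}}$ enters, via $|\Gamma_{12}|\le\|\matr\Gamma\|_2$.
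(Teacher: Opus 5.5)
Your proposal is correct and follows essentially the paper's proof: for (i), enumeration of stationary and boundary points on sign-constant arcs; for (ii), maximality gives a gain of at least $\tau(|c^*s^*|+(s^*)^2)$, this is bounded below by $\tau\sqrt{1-c^*}$, and then $d_0\leq4|\Gamma_{12}|\leq4M_{\mathrm{R4}}$. The only loose step is your ``$c$ near $1$ / $c$ small'' argument, which becomes exact if you add your two bounds, $c\sqrt{1-c}+(1-c)=\sqrt{1-c}+(1-c)\bigl(1-\sqrt{1-c}\bigr)\geq\sqrt{1-c}$ (the paper instead uses $c+|s|\geq c^2+s^2=1$, so $|cs|+s^2=|s|(c+|s|)\geq|s|\geq\sqrt{1-c}$).
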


\begin{proof}
The function is continuous on $\Sphere{1}$ and invariant under $(c,s)\mapsto(-c,-s)$, so it attains a maximum with $c\geq0$. In the parametrization above, the maximization is equivalent to maximizing
$|\vect{r}^\T\matr{\Gamma}\vect{r}|-\frac{\tau}{2}(|r_2|+1-r_1)$
over $\vect{r}\in\Sphere{1}$. To obtain a maximizer algebraically, use $|a|=\max_{\sigma=\pm1}\sigma a$ and split into $r_2\geq0$ and $r_2\leq0$. On each semicircle the objective is a quadratic plus a linear form. Its stationary equation in $\tan(\phi/2)$, where $\vect{r}=(\cos\phi,\sin\phi)^\T$, has degree at most four. Comparing its real feasible roots with the semicircle endpoints gives a global maximizer. An identically zero stationary equation means that the corresponding arc is constant, and any point on that arc suffices. This also covers repeated-root and constant-objective cases. 
For (ii), maximality gives
$$\tilde q(c^*,s^*)-\tilde q(1,0)\geq\tau(c^*|s^*|+(s^*)^2).$$
Because $c^*,|s^*|\geq0$ and $(c^*)^2+(s^*)^2=1$, we have $c^*+|s^*|\geq1$. Thus the gain is at least $\tau|s^*|\geq\tau\sqrt{1-c^*}$. Finally,
\(\dist(\vect{0},\partial_{\Sphere{1}}\tilde q(1,0))
\leq4|\Gamma_{12}|\leq4M_{\mathrm{R4}}\),
which proves \eqref{eq:real4-penalized-ascent} with exactly the stated constant.
\end{proof}

\begin{proof}[Proof of \cref{prop:uniform-ascent-real4}]
If the candidate is the identity, the assertion is immediate.
Otherwise, the two nonzero selected contractions make the active
branch in \eqref{eq:subpro_hk_real} unique and smooth at \((1,0)\). For the quadratic branch,
\cref{lem:real4-absolute-rayleigh-ascent,lem:real4-penalized-ascent}
give the required gain, according as \eqref{eq:condi_4real} holds or
fails. For the linear branch, write
\(\vect{r}^*=(x^*,y^*)^\T
=\operatorname{sign}(b_1)\vect{b}/\|\vect{b}\|\).
Its gain is \(\|\vect{b}\|(1-x^*)\), its spherical gradient norm at
the identity is \(2\|\vect{b}\||y^*|\), and
\(1-x^*\geq |y^*|\sqrt{1-c^*}\). Thus every branch has a gain bounded
below by the corresponding constant in the statement times its
gradient norm and \(\sqrt{1-c^*}\). Since the selected branch is
uniquely active, the gradient norm of \(h_{k,\mathrm{R4}}\) is twice
that of the branch, while its gain is at least twice the branch gain.
Finally,
\(\|\matr{\Psi}(c^*,s^*)-\matr{I}_2\|_F=2\sqrt{1-c^*}\).
Taking the minimum of the branch constants gives
\(\varepsilon_{\mathrm{R4}}\).
\end{proof}

\subsection{Fourth-order complex case}
For the complex fourth-order model \eqref{cost-fn-complex-l1-diag},
we first give an explicit representation of the two selected diagonal
contractions.

\begin{lemma}
\label{lem:complex4-representation}
Let $\tens{W} \in \CC^{2\times 2\times 2\times 2}$ satisfy
$\tenselem{W}_{ijkl} = \tenselem{W}_{klij}^{*}$ and
$\tenselem{W}_{ijkl} = \tenselem{W}_{jikl}$ for all $1\leq i,j,k,l \leq 2.$ 
Let $\matr{\Psi}(c,s_1,s_2)\in\UN{2}$ be as in \eqref{eq:2.10}, and define
$\tens{T} = \tens{W} \contr{1} \matr{\Psi}^\H \contr{2} \matr{\Psi}^\H \contr{3} \matr{\Psi}^\T \contr{4} \matr{\Psi}^\T.$ 
Define
\begin{equation*}
\vect{r} = \vect{r}(c,s_1,s_2) \eqdef
[2c^2 - 1, 2c s_1, -2c s_2]^{\T} \in \Sphere{2}.
\end{equation*}
Then
$\tenselem{T}_{1111} = \vect{r}^\T \matr{\Gamma} \vect{r} + \vect{b}^\T \vect{r}, 
\tenselem{T}_{2222} = \vect{r}^\T \matr{\Gamma} \vect{r} - \vect{b}^\T \vect{r},$ 
where
\begingroup\small
\begin{align}
\matr{\Gamma} &=
\begin{pmatrix}
\frac{\tenselem{W}_{1111} + \tenselem{W}_{2222}}{2}
&
\frac{\Re(\tenselem{W}_{1112}) - \Re(\tenselem{W}_{1222})}{2}
&
\frac{\Im(\tenselem{W}_{1222})-\Im(\tenselem{W}_{1112})}{2}
\\[2mm]
\frac{\Re(\tenselem{W}_{1112}) - \Re(\tenselem{W}_{1222})}{2}
&
\frac{\tenselem{W}_{1111} + \tenselem{W}_{2222}+2\Re(\tenselem{W}_{1122})}{4}+ \Re(\tenselem{W}_{1212}) 
&
-\frac{\Im(\tenselem{W}_{1122})}{2}
\\[2mm]
\frac{\Im(\tenselem{W}_{1222})-\Im(\tenselem{W}_{1112})}{2}
&
-\frac{\Im(\tenselem{W}_{1122})}{2}
&
\frac{\tenselem{W}_{1111} + \tenselem{W}_{2222}-2\Re(\tenselem{W}_{1122})}{4} + \Re(\tenselem{W}_{1212})
\end{pmatrix},\notag\\
\vect{b} &=
\begin{pmatrix}
\frac{\tenselem{W}_{1111} - \tenselem{W}_{2222}}{2} \\
\Re(\tenselem{W}_{1112}) + \Re(\tenselem{W}_{1222}) \\
-\,\Im(\tenselem{W}_{1112})-\,\Im(\tenselem{W}_{1222})
\end{pmatrix}. \label{eq:4thcomplex_Gamma}
\end{align}
\endgroup
\end{lemma}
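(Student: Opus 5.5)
The proof is a direct multilinear expansion. It is organized so that the Hermitian-type symmetries of $\tens{W}$ collapse the result into the stated quadratic-plus-linear form in $\vect{r}$, in parallel with \cref{lem:real4-representation}.

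\textbf{Step 1: expand the two diagonal entries.} With $\matr{\Psi}=\matr{\Psi}(c,s_1,s_2)$ and $s=s_1+is_2$, the first column of $\matr{\Psi}$ is $\vect{\psi}_1=(c,s^*)^\T$ and the second is $\vect{\psi}_2=(-s,c)^\T$. By the definition of the mode products,
\[
\tenselem{T}_{1111}=\sum_{a,b,e,f}\tenselem{W}_{abef}\,\psi_{a1}^*\psi_{b1}^*\psi_{e1}\psi_{f1},
\]
and similarly for $\tenselem{T}_{2222}$ with $\vect{\psi}_2$. The symmetry $\tenselem{W}_{ijkl}=\tenselem{W}_{jikl}$, together with the Hermitian relation $\tenselem{W}_{ijkl}=\tenselem{W}^*_{klij}$, also gives $\tenselem{W}_{ijkl}=\tenselem{W}_{ijlk}$. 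So the sixteen terms group by the multisets $\{a,b\}$ and $\{e,f\}$ into nine classes. The Hermitian relation pairs each class with its conjugate, so both $\tenselem{T}_{1111}$ and $\tenselem{T}_{2222}$ are real. They are expressed through the real quantities $\tenselem{W}_{1111}$, $\tenselem{W}_{2222}$, $\Re\tenselem{W}_{1212}$, and the complex quantities $\tenselem{W}_{1112}$, $\tenselem{W}_{1222}$, $\tenselem{W}_{1122}$. Each term is a real quartic in $(c,s_1,s_2)$, homogeneous of degree $4$ when $c$ and $s$ are given degree $1$. For example, the $\tenselem{W}_{1112}$ terms give $4c^3\Re(\tenselem{W}_{1112}s)$, and the $\tenselem{W}_{1122}$ terms give $2c^2\Re(\tenselem{W}_{1122}s^2)$, up to the exact conjugation pattern.

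\textbf{Step 2: rewrite in the variables $r_1,r_2,r_3$.} Using $c^2+|s|^2=1$, express each quartic monomial through the entries of $\vect{r}=(2c^2-1,\,2cs_1,\,-2cs_2)$, which satisfy $\|\vect{r}\|=1$. The useful identities are:
\[
c^4=\tfrac{(1+r_1)^2}{4},\qquad |s|^4=\tfrac{(1-r_1)^2}{4},\qquad c^2|s|^2=\tfrac{1-r_1^2}{4}=\tfrac{r_2^2+r_3^2}{4},
\]
\[
c^3 s=\tfrac{(1+r_1)(r_2-ir_3)}{4},\qquad c|s|^2 s=\tfrac{(1-r_1)(r_2-ir_3)}{4},\qquad c^2s^2=\tfrac{(r_2-ir_3)^2}{4}.
\]
Constant terms are converted to the quadratic form $\|\vect{r}\|^2$. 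After this substitution, $\tenselem{T}_{1111}$ and $\tenselem{T}_{2222}$ are each a quadratic form in $\vect{r}$ plus a linear form in $\vect{r}$.

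\textbf{Step 3: use the swap symmetry.} The map $\vect{\psi}_1\mapsto\vect{\psi}_2$ corresponds to replacing $(c,s)$ by $(-s,c)$ up to phase, which sends $\vect{r}\mapsto-\vect{r}$ in the appropriate sense. Hence the sum $\tenselem{T}_{1111}+\tenselem{T}_{2222}$ consists only of the even (quadratic) part, and the difference $\tenselem{T}_{1111}-\tenselem{T}_{2222}$ only of the odd (linear) part. Reading off the coefficients of $r_1^2$, $r_1r_2$, $r_1r_3$, $r_2^2$, $r_3^2$, $r_2r_3$ gives $\matr{\Gamma}$, and the coefficients of $r_1$, $r_2$, $r_3$ give $\vect{b}$.

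\textbf{Main obstacle.} The delicate point is the $(2,2)$, $(3,3)$ and $(2,3)$ entries of $\matr{\Gamma}$. The quantity $\Re(\tenselem{W}_{1122}s^2)$ splits as $\Re\tenselem{W}_{1122}(s_1^2-s_2^2)-2\Im\tenselem{W}_{1122}s_1s_2$, up to the chosen conjugation convention, and the $\tfrac14(\tenselem{W}_{1111}+\tenselem{W}_{2222})$ and $\Re\tenselem{W}_{1212}$ contributions must be distributed using $r_1^2+r_2^2+r_3^2=1$. A useful check is to specialize to real $\tens{W}$ and $s_2=0$, which should recover \cref{lem:real4-representation}, and to verify separately the value at $\matr{\Psi}=\matr{I}_2$, i.e.\ $\vect{r}=(1,0,0)$.
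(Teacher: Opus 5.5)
Your approach is correct. It is the direct multilinear expansion that the lemma rests on; the paper states it without a written proof, as it does \cref{lem:real4-representation}.

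Step~3 is a valid shortcut once made precise. Write $\tenselem{T}_{1111}=F(\vect{\psi}_1)$ and $\tenselem{T}_{2222}=F(\vect{\psi}_2)$ with $F(\vect{x})=\sum_{a,b,e,f}\tenselem{W}_{abef}\bar x_a\bar x_b x_e x_f$. Then $F$ depends only on $|x_1|^2$, $|x_2|^2$ and $\bar x_1x_2$. At $\vect{\psi}_1=(c,\bar s)^\T$ these equal $\tfrac{1+r_1}{2}$, $\tfrac{1-r_1}{2}$, $\tfrac{r_2+\mathrm{i}r_3}{2}$. At $\vect{\psi}_2=(-s,c)^\T$ they equal $\tfrac{1-r_1}{2}$, $\tfrac{1+r_1}{2}$, $-\tfrac{r_2+\mathrm{i}r_3}{2}$, which is exactly $\vect{r}\mapsto-\vect{r}$. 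A function on $\Sphere{2}$ splits uniquely into a homogeneous quadratic form plus a linear form, and this justifies reading off $\matr{\Gamma}$ and $\vect{b}$.

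The one point you must settle, rather than leave ``up to the conjugation convention'', is which factor carries the conjugate. It alone fixes the signs of $\Gamma_{13}$, $\Gamma_{23}$ and $b_3$.

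\begin{itemize}
\item The first two modes are contracted with $\bar{\vect{\psi}}_1=(c,s)^\T$ and the last two with $\vect{\psi}_1=(c,\bar s)^\T$. So the cross terms are $4c^3\Re(\tenselem{W}_{1112}\bar s)$, $2c^2\Re(\tenselem{W}_{1122}\bar s^2)$ and $4c|s|^2\Re(\tenselem{W}_{1222}\bar s)$, not the versions with $s$ that you wrote.
\item The relevant identity is $c\bar s=(r_2+\mathrm{i}r_3)/2$. For instance, $4c^3\Re(\tenselem{W}_{1112}\bar s)=(1+r_1)\bigl(\Re(\tenselem{W}_{1112})r_2-\Im(\tenselem{W}_{1112})r_3\bigr)$.
\item Similarly, $2c^2\Re(\tenselem{W}_{1122}\bar s^2)=\tfrac12\Re(\tenselem{W}_{1122})(r_2^2-r_3^2)-\Im(\tenselem{W}_{1122})r_2r_3$. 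Together with the diagonal terms these give precisely the stated $\matr{\Gamma}$ and $\vect{b}$.
\end{itemize}

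With $s$ in place of $\bar s$, all three imaginary-part signs flip. Neither of your proposed checks can detect this: real $\tens{W}$ with $s_2=0$ kills every imaginary part, and $\vect{r}=\vect{e}_1$ sees no off-diagonal entries. A check at $c=1/\sqrt2$, $s=\mathrm{i}/\sqrt2$, where $\vect{r}=-\vect{e}_3$, would catch it.
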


For $d=4$ and $r=2$ in \eqref{cost-fn-complex-l1-diag}, the two diagonal
contractions are real. Hence the reduced elementary objective is
\begin{equation}\label{eq:subpro_hk_complex}
h_{k,\mathrm{C4}}(c,s_1,s_2)=h_{k,\mathrm{C4}}(\matr{\Psi})
=2\max\{|\vect{r}^\T\matr{\Gamma}\vect{r}|,\,|\vect{b}^\T\vect{r}|\},
\end{equation}
where $\matr{\Gamma}$ and $\vect{b}$ are constructed from $\tens{W}(\matr{U}_{k-1})^{(i_k,j_k)}$ by \eqref{eq:4thcomplex_Gamma}. The unitarily transformed tensor retains the two symmetries in \cref{lem:complex4-representation}. 
Let $\lambda_1\geq\lambda_2\geq\lambda_3$ be the eigenvalues of $\matr{\Gamma}$, with an orthonormal eigenbasis $\vect{w},\vect{u},\vect{v}$. Whenever an extremal eigenvector is selected below, that same vector is included in the eigenbasis used for the spectral tests. Let $\lambda_*=\lambda_1$ if $|\lambda_1|\geq|\lambda_3|$, and $\lambda_*=\lambda_3$ otherwise. Define
\begin{equation}\label{eq:q_func_cs_comp}
\tilde q(c,s_1,s_2)=|q(c,s_1,s_2)|,\ \ 
\tilde p(c,s_1,s_2)=|\vect{b}^\T\vect{r}(c,s_1,s_2)|,
\end{equation}
where
\begin{equation}\label{eq:q_func_cs_tilde_4com}
q(c,s_1,s_2)=\vect{r}(c,s_1,s_2)^\T\matr{\Gamma}\vect{r}(c,s_1,s_2).
\end{equation}
Fix $0<\tau<1$. The auxiliary penalized problem is defined on the closed hemisphere
$\Sphere{2}_+=\{(c,s_1,s_2)\in\Sphere{2}:c\geq0\}$ by
\begin{equation}\label{eq:tildeq_eta_def_complex}
\tilde q_\tau(c,s_1,s_2)=\tilde q(c,s_1,s_2)
-\tau\bigl(c(|s_1|+|s_2|)+s_1^2+s_2^2\bigr).
\end{equation}
All maximizers of this auxiliary function used by the algorithm below are
over \(\Sphere{2}_+\). The domain restriction is essential: with this
formula the penalty need not be nonnegative on the other hemisphere.
Continuity and compactness give a uniform bound
\(\|\matr{\Gamma}\|_2\leq M_{\mathrm{C4}}\) for some
\(M_{\mathrm{C4}}>0\).

{
\floatname{algorithm}{Subalgorithm}
\renewcommand{\thealgorithm}{CG-C4}
\begin{algorithm}
\caption{Candidate generation for the reduced objective \eqref{eq:subpro_hk_complex}}\label{alg:branch-solver-h-complex}
\begin{algorithmic}[1]
\STATE \textbf{Input:} the elementary function \eqref{eq:subpro_hk_complex}, fixed $\varrho,\upsilon>0$, and $0<\tau<1$.
\STATE \textbf{Output:} a rotation $\matr{\Psi}(c^*,s_1^*,s_2^*)$.
\STATE Form an ordered orthonormal eigenbasis containing an eigenvector for $\lambda_*$ with maximal nonnegative first component; use this same basis in the tests below.
\IF{$|\Gamma_{11}|\geq|b_1|$}
\STATE Choose the quadratic branch.
\IF{$\lambda_1=\lambda_3$}
\STATE Set $(c^*,s_1^*,s_2^*)=(1,0,0)$.
\ELSE
\STATE \textbf{if} both conditions
\begin{equation}\label{eq:cond_4th_complex_eigen_22}
\begin{cases}
\dfrac{\lambda_1-\lambda_2}{\lambda_1-\lambda_3}u_1^2+v_1^2\geq\varrho u_1^2,
&|\lambda_1|\geq|\lambda_3|,\\[2mm]
\dfrac{\lambda_2-\lambda_3}{\lambda_1-\lambda_3}u_1^2+w_1^2\geq\varrho u_1^2,
&|\lambda_3|>|\lambda_1|
\end{cases}
\end{equation}
and
\begin{equation}\label{eq:cond_4th_complex_eigen_232}
|\lambda_1+\lambda_3|\geq\upsilon\sqrt{\Gamma_{12}^2+\Gamma_{13}^2}
\end{equation}
hold, \textbf{then} choose a unit eigenvector $\vect{r}^*$ corresponding to $\lambda_*$ with $r_1^*\geq0$, maximizing $r_1^*$ among tied choices.
\STATE \textbf{else} use an exact oracle to choose a global maximizer
$(c^*,s_1^*,s_2^*)$ of $\tilde q_\tau$ on $\Sphere{2}_+$; if several maximizers exist, any one may be selected.
\ENDIF
\ELSE
\STATE Choose the linear branch and set $\vect{r}^*=\operatorname{sign}(b_1)\vect{b}/\|\vect{b}\|$.
\ENDIF
\STATE Whenever $\vect{r}^*$ was computed, recover $c^*=\sqrt{(1+r_1^*)/2}$, $s_1^*=r_2^*/(2c^*)$, and $s_2^*=-r_3^*/(2c^*)$.
\vspace{-\baselineskip}
\STATE If $h_{k,\mathrm{C4}}(\matr{\Psi}(c^*,s_1^*,s_2^*))=
h_{k,\mathrm{C4}}(\matr{I}_2)$, set $(c^*,s_1^*,s_2^*)=(1,0,0)$.
\STATE Return $\matr{\Psi}(c^*,s_1^*,s_2^*)$.
\end{algorithmic}
\end{algorithm}
}

The selected branch is active at the identity, and the penalty is
nonnegative on $\Sphere{2}_+$ and vanishes at the identity.
Hence every candidate is nondecreasing for the reduced objective.
Any global auxiliary maximizer is admissible.
The fallback uses an exact oracle; its existence follows from
compactness, but a finite implementation is not developed here.

\begin{proposition}
\label{prop:uniform-ascent-complex4}
Let $h_{k,\mathrm{C4}}(c,s_1,s_2)$ be defined as in
\eqref{eq:subpro_hk_complex}. Suppose
$g_{i_k}(\matr{U}_{k-1})g_{j_k}(\matr{U}_{k-1})\neq0$.
Then the candidate returned by
\subalgref{alg:branch-solver-h-complex} satisfies
\[
h_{k,\mathrm{C4}}(c^*,s_1^*,s_2^*)
-h_{k,\mathrm{C4}}(1,0,0)
\geq
\varepsilon_{\mathrm{C4}}\,
\dist\left(\vect{0},
\partial_{\Sphere{2}}h_{k,\mathrm{C4}}(1,0,0)\right)
\left\|\matr{\Psi}(c^*,s_1^*,s_2^*)-\matr{I}_2\right\|_F,
\]
where
\(\varepsilon_{\mathrm{C4}}
\eqdef\frac12\min\{\frac12,
\frac{\varrho}{2\sqrt2(1+\varrho)},\frac{\upsilon}{4},
\frac{\tau}{4M_{\mathrm{C4}}}\}>0\) is independent of \(k\).
\end{proposition}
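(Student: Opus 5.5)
The plan is to follow the proof of \cref{prop:uniform-ascent-real4} branch by branch and then glue the branches together in the same way. First I dispose of the trivial cases. If the candidate is $\matr{I}_2$, which happens when $\lambda_1=\lambda_3$ or after the final reset, the inequality holds trivially. Otherwise, since $g_{i_k}g_{j_k}\neq0$ at $\matr{U}_{k-1}$, we have $|\Gamma_{11}+b_1|,|\Gamma_{11}-b_1|>0$. Hence the active branch of \eqref{eq:subpro_hk_complex} at $(1,0,0)$ is unique and smooth, or the two branches tie. The tie needs a short separate remark: there $\partial_{\Sphere{2}}h_{k,\mathrm{C4}}$ is the convex hull of twice the two branch gradients, so its distance to $\vect{0}$ is at most twice the norm of the gradient of the selected branch.

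In every case it then suffices to prove a branch-wise estimate of the form
\[
\text{gain}\ \geq\ \varepsilon\,\|\text{branch gradient at }(1,0,0)\|\sqrt{1-c^*},
\]
and to combine it with $\|\matr{\Psi}(c^*,s_1^*,s_2^*)-\matr{I}_2\|_F=2\sqrt{1-c^*}$. This is where the overall factor $\frac12$ in $\varepsilon_{\mathrm{C4}}$ comes from.

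\textbf{Linear branch and fallback.} For the linear branch the computation is the real one verbatim. Writing $\vect{r}^*=(x^*,\vect{y}^*)$, the gain is $\|\vect{b}\|(1-x^*)$ and the spherical gradient norm is $2\|\vect{b}\|\,\|\vect{y}^*\|$. The inequality $1-x^*\geq\|\vect{y}^*\|\sqrt{1-c^*}$ follows from $x^*=2(c^*)^2-1$ and $\|\vect{y}^*\|=2c^*\sqrt{1-(c^*)^2}$. The fallback (penalized) case copies \cref{lem:real4-penalized-ascent}(ii). Maximality of the oracle output on $\Sphere{2}_+$ gives
\[
\text{gain}\geq\tau\bigl(c^*(|s_1^*|+|s_2^*|)+(s_1^*)^2+(s_2^*)^2\bigr)\geq\tau\sqrt{(s_1^*)^2+(s_2^*)^2}\geq\tau\sqrt{1-c^*}.
\]
I then bound the distance from $\vect{0}$ to $\partial_{\Sphere{2}}\tilde q(1,0,0)$ by $4\sqrt{\Gamma_{12}^2+\Gamma_{13}^2}\leq4M_{\mathrm{C4}}$, which yields the constant $\tau/(4M_{\mathrm{C4}})$.

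\textbf{Eigenvector case.} This is the main obstacle. Here \eqref{eq:cond_4th_complex_eigen_22} and \eqref{eq:cond_4th_complex_eigen_232} hold, and I split as in \cref{lem:real4-absolute-rayleigh-ascent}.

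First suppose $\Gamma_{11}\lambda_*\geq0$. After multiplying $\matr{\Gamma}$ by $\operatorname{sign}(\lambda_*)$, I may assume $\lambda_*=\lambda_1$. Expanding $\vect{e}_1$ in the eigenbasis gives
\[
\lambda_1-\Gamma_{11}=(\lambda_1-\lambda_2)u_1^2+(\lambda_1-\lambda_3)v_1^2 .
\]
The squared gradient norm is $16\bigl(\Gamma_{12}^2+\Gamma_{13}^2\bigr)=16\bigl(\|\matr{\Gamma}\vect{e}_1\|^2-\Gamma_{11}^2\bigr)$, which equals $16$ times the variance of the eigenvalues under the weights $(w_1^2,u_1^2,v_1^2)$. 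This variance is at most $(\lambda_1-\lambda_3)^2(1-w_1^2)$. Now \eqref{eq:cond_4th_complex_eigen_22} says exactly that the gain is at least $\frac{\varrho}{1+\varrho}(\lambda_1-\lambda_3)(u_1^2+v_1^2)=\frac{\varrho}{1+\varrho}(\lambda_1-\lambda_3)(1-w_1^2)$, where I use $w_1^2+u_1^2+v_1^2=1$. The precise $\lambda_1-\lambda_3$ factor needs checking. Using $w_1=r_1^*=2(c^*)^2-1$, one gets
\[
1-w_1^2=4(c^*)^2\bigl(1-(c^*)^2\bigr)\geq 2\sqrt{1-w_1^2}\,\sqrt{1-c^*},
\]
because $c^*\geq1/\sqrt2$. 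So the gain is at least $\frac{\varrho}{1+\varrho}(\lambda_1-\lambda_3)\sqrt{1-w_1^2}\cdot2\sqrt{1-c^*}$, which compares with the gradient norm $\leq4(\lambda_1-\lambda_3)\sqrt{1-w_1^2}$ to give $\frac{\varrho}{2(1+\varrho)}$. I expect the tightening to $\frac{\varrho}{2\sqrt2(1+\varrho)}$ to need some care with $c^*\geq1/\sqrt2$. The role of \eqref{eq:cond_4th_complex_eigen_22} is to prevent a small gap $\lambda_1-\lambda_2$ from killing the gain while $u_1$ carries the gradient. When $|\lambda_3|>|\lambda_1|$ the symmetric formula with $w_1$ is used.

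Next suppose $\Gamma_{11}\lambda_*<0$. As in the real case, $\lambda_3\leq\Gamma_{11}\leq\lambda_1$ gives $|\lambda_*|-|\Gamma_{11}|\geq|\lambda_1+\lambda_3|\geq\upsilon\sqrt{\Gamma_{12}^2+\Gamma_{13}^2}$ by \eqref{eq:cond_4th_complex_eigen_232}. Together with $\sqrt{1-c^*}\leq1$ and the gradient bound $4\sqrt{\Gamma_{12}^2+\Gamma_{13}^2}$, this gives the constant $\upsilon/4$.

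Finally, taking the minimum of the branch constants and halving yields $\varepsilon_{\mathrm{C4}}$.
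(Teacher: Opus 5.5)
Your proposal follows the paper's proof essentially step for step. You use the same trivial cases and the same branch-wise estimate, gain $\geq\varepsilon\,\|\text{branch gradient}\|\sqrt{1-c^*}$, for the linear branch, the two eigenvector subcases, and the penalized fallback. You glue them with the factor $2$ from $h_{k,\mathrm{C4}}=2\max\{|q|,|p|\}$ and with $\|\matr{\Psi}-\matr{I}_2\|_F=2\sqrt{1-c^*}$. Your variance bound on $\Gamma_{12}^2+\Gamma_{13}^2$ is the paper's step of subtracting $\lambda_1\matr{I}_3$ inside the projection, in different words.

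There is, however, one false step in the eigenvector case. For $w_1^2<1$, the inequality
\[
1-w_1^2\geq 2\sqrt{1-w_1^2}\,\sqrt{1-c^*}
\]
is equivalent to $c^*\sqrt{1+c^*}\geq1$, that is, $(c^*)^2(1+c^*)\geq1$. This fails for $c^*\in[1/\sqrt2,c_0)$ with $c_0\approx0.755$. For example, at $w_1=0$, $c^*=1/\sqrt2$, the left side is $1$ and the right side is $2\sqrt{1-1/\sqrt2}\approx1.08$. So your constant $\varrho/(2(1+\varrho))$ is not established. The fix is the paper's bound: $\sqrt{1-w_1^2}=2c^*\sqrt{1+c^*}\sqrt{1-c^*}\geq\sqrt2\,\sqrt{1-c^*}$ for $c^*\geq1/\sqrt2$. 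Combined with the gradient bound $4(\lambda_1-\lambda_3)\sqrt{1-w_1^2}$, this gives exactly $\varrho/(2\sqrt2(1+\varrho))$. That constant is weaker than the one you wrote, so reaching it is not a ``tightening''. You only need to replace the factor $2$ by $\sqrt2$; no extra care is required.

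Two smaller points. First, the tie in your opening paragraph cannot occur. The two selected contractions at the identity are $\Gamma_{11}\pm b_1$, so $g_{i_k}g_{j_k}\neq0$ is equivalent to $|\Gamma_{11}|\neq|b_1|$. Hence the active branch is unique with nonzero value, and so it is smooth at $(1,0,0)$; the convex-hull remark is harmless but superfluous. Second, \eqref{eq:cond_4th_complex_eigen_22} alone does not ``say exactly'' that the gain is at least $\frac{\varrho}{1+\varrho}(\lambda_1-\lambda_3)(u_1^2+v_1^2)$. You must combine it with $\alpha u_1^2+v_1^2\geq v_1^2$, as the paper does.
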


\begin{lemma}
\label{lem:complex4-rayleigh-ascent}
Let $q(c,s_1,s_2)$ be defined as in \eqref{eq:q_func_cs_tilde_4com}. 
Let \(\vect{w}\eqdef\vect{r}(c^\ast,s_1^\ast,s_2^\ast)\) be a unit eigenvector corresponding to the
largest eigenvalue of \(\matr{\Gamma}\). 
We choose the sign of this eigenvector
such that its first component is nonnegative, and then choose
\((c^\ast,s_1^\ast,s_2^\ast)\in \Sphere{2}\) satisfying \(c^\ast\ge 0\). 
Suppose that there exists $\varrho>0$ such that
\begin{equation}\label{eq:cond_4th_complex_eigen}
\frac{\lambda_1-\lambda_2}{\lambda_1-\lambda_3}u_1^2
+
v_1^2
\geq
\varrho u_1^2
\end{equation}
whenever $\lambda_1\neq \lambda_3$. 
Then there exists a constant \(\varepsilon>0\), independent of
\(\matr{\Gamma}\), such that
\[
q(c^\ast,s_1^\ast,s_2^\ast)-q(1,0,0)
\geq
\varepsilon 
\|\grad q(1,0,0)\|
\sqrt{1-c^*}. 
\]
\end{lemma}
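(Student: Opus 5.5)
The plan is to mirror the proof of \cref{lem:real4-rayleigh-ascent} in the spectral coordinates of $\matr{\Gamma}$. I write the first standard basis vector $\vect{e}_1=\vect{r}(1,0,0)$ in the orthonormal eigenbasis $\vect{w},\vect{u},\vect{v}$. Its coordinates are $(w_1,u_1,v_1)$, with $w_1^2+u_1^2+v_1^2=1$ and $w_1\ge0$ by the sign convention. Set $D=\lambda_1-\lambda_3\ge0$. If $D=0$ then $\matr{\Gamma}=\lambda_1\matr{I}$, $q$ is constant and the gradient vanishes, so the claim is trivial; assume $D>0$ from now on.

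The first step is the gain. Since $\vect{r}(c^*,s_1^*,s_2^*)=\vect{w}$, we have $q(c^*,s_1^*,s_2^*)=\lambda_1$. Hence
\[
G\eqdef q(c^*,s_1^*,s_2^*)-q(1,0,0)=\lambda_1-\vect{e}_1^\T\matr{\Gamma}\vect{e}_1=(\lambda_1-\lambda_2)u_1^2+D v_1^2 .
\]

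The second step bounds the gradient. At $(1,0,0)$ the tangent directions of $\Sphere{2}$ are the $s_1$ and $s_2$ axes. Along these, $\partial\vect{r}/\partial s_1=(0,2,0)^\T$ and $\partial\vect{r}/\partial s_2=(0,0,-2)^\T$, so $\|\grad q(1,0,0)\|=4\sqrt{\Gamma_{12}^2+\Gamma_{13}^2}=4\|(\matr{\Gamma}-\Gamma_{11}\matr{I})\vect{e}_1\|$. Write $\Gamma_{11}$ as the mean of the eigenvalues under the weights $(w_1^2,u_1^2,v_1^2)$. Then the squared norm above is a weighted variance of the eigenvalues. A variance is at most the weighted second moment about any fixed point, so taking that point to be $\lambda_1$ gives
\[
\tfrac1{16}\|\grad q(1,0,0)\|^2\le(\lambda_1-\lambda_2)^2u_1^2+D^2v_1^2\le D\,G .
\]
The last inequality uses $0\le\lambda_1-\lambda_2\le D$.

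The third step bounds the step size: $1-c^*\le 1-(c^*)^2=(1-w_1)/2\le(1-w_1^2)/2=(u_1^2+v_1^2)/2$, where the middle inequality uses $w_1\in[0,1]$. Combining this with the gradient bound, it suffices to prove
\[
G\ge 8\varepsilon^2 D\,(u_1^2+v_1^2),
\]
because then $\varepsilon\|\grad q\|\sqrt{1-c^*}\le 4\varepsilon\sqrt{DG}\,\sqrt{(u_1^2+v_1^2)/2}\le G$.

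The final step, which is the only place the hypothesis \eqref{eq:cond_4th_complex_eigen} enters and the point I expect to need care, is to obtain this last inequality. From \eqref{eq:cond_4th_complex_eigen}, $G/D\ge\varrho u_1^2$. Trivially also $G/D\ge v_1^2$. Therefore $G/D\ge\max\{\varrho u_1^2,v_1^2\}\ge\frac{\varrho}{1+\varrho}(u_1^2+v_1^2)$, by taking the convex combination of the two bounds with weights $\frac{1}{1+\varrho}$ and $\frac{\varrho}{1+\varrho}$. This yields the claim with $\varepsilon=\sqrt{\varrho/(8(1+\varrho))}$, which is independent of $\matr{\Gamma}$.

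As a sanity check, the real case in \cref{lem:real4-rayleigh-ascent} has $v_1=0$ and $\lambda_2=\lambda_3$, so the ratio $(\lambda_1-\lambda_2)/D=1$ and the condition holds automatically. This confirms that \eqref{eq:cond_4th_complex_eigen} exactly excludes the degenerate situation where $\vect{e}_1$ leans on a middle eigenvector with $\lambda_2\approx\lambda_1$.
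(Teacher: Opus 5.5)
Your proof is correct and follows the paper's argument. It uses the same expansion of $\vect{e}_1$ in the eigenbasis and the same gain formula $G=(\lambda_1-\lambda_2)u_1^2+Dv_1^2$. Your weighted-variance bound on the gradient is the paper's ``subtract $\lambda_1\matr{I}_3$ inside the projection'' step, and you apply the hypothesis identically to get $G\geq D\frac{\varrho}{1+\varrho}(u_1^2+v_1^2)$ together with the same step bound $\sqrt{u_1^2+v_1^2}\geq\sqrt2\sqrt{1-c^*}$.

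The only difference is how you close the estimate. You bound $\|\grad q(1,0,0)\|^2\leq 16DG$ instead of the paper's cruder $\|\grad q(1,0,0)\|\leq 4D\sqrt{u_1^2+v_1^2}$, and then combine by a geometric mean. This yields the slightly better constant $\sqrt{\varrho/(8(1+\varrho))}$ in place of the paper's $\varrho/[2\sqrt2(1+\varrho)]$.
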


\begin{proof}
If $\lambda_1=\lambda_3$, then $\matr{\Gamma}=\lambda_1\matr{I}_3$, and both the gain and the gradient are zero. Otherwise put $D=\lambda_1-\lambda_3>0$ and $\alpha=(\lambda_1-\lambda_2)/D\in[0,1]$. Since $\vect{e}_1=w_1\vect{w}+u_1\vect{u}+v_1\vect{v}$, the gain is $D(\alpha u_1^2+v_1^2)$. Assumption \eqref{eq:cond_4th_complex_eigen} and the inequality $\alpha u_1^2+v_1^2\geq v_1^2$ give
$\alpha u_1^2+v_1^2\geq\frac{\varrho}{1+\varrho}(u_1^2+v_1^2)$. 
The differential of the coordinate map at $(1,0,0)$ sends $(0,\xi_1,\xi_2)^\T$ to $(0,2\xi_1,-2\xi_2)^\T$. Consequently,
$$\|\grad q(1,0,0)\|=4\sqrt{\Gamma_{12}^2+\Gamma_{13}^2}
=4\|(\matr{I}_3-\vect{e}_1\vect{e}_1^\T)\matr{\Gamma}\vect{e}_1\|.$$
Subtracting $\lambda_1\matr{I}_3$ inside the projection bounds this by
$4D\sqrt{\alpha^2u_1^2+v_1^2}\leq4D\sqrt{u_1^2+v_1^2}$.
Combining these inequalities bounds the gain below by
$\frac{\varrho}{4(1+\varrho)}\|\grad q(1,0,0)\|\sqrt{u_1^2+v_1^2}$. 
Finally, \(w_1=2(c^*)^2-1\geq0\) implies \(c^*\geq1/\sqrt2\).
Moreover,
\(\sqrt{u_1^2+v_1^2}=2c^*\sqrt{1+c^*}\sqrt{1-c^*}\), which is at
least \(\sqrt2\sqrt{1-c^*}\). The stated estimate follows with
\(\varepsilon=\varrho/[2\sqrt2(1+\varrho)]\), independently of
\(\matr{\Gamma}\).
\end{proof}

\begin{remark}
The spectral condition \eqref{eq:cond_4th_complex_eigen} is related to the conditions in \cite[Lemma~7.3]{ULC2019} and \cite[Theorem~5.7]{sheng2023jacobi}. In particular, it holds when $u_1=0$, and also when $|v_1|\geq\sqrt{\varrho}\,|u_1|$.
\end{remark}

\begin{lemma}
\label{lem:complex4-absolute-rayleigh-ascent}
Let $\tilde{q}(c,s_1,s_2)$ be defined as in \eqref{eq:q_func_cs_comp}. 
Let \(\vect{r}^{\ast}\eqdef\vect{r}(c^\ast,s_1^\ast,s_2^\ast)\) be a unit eigenvector corresponding to $\lambda_{\ast}$. 
We choose the sign of this eigenvector
such that its first component is nonnegative, and then choose
\((c^\ast,s_1^\ast,s_2^\ast)\in \Sphere{2}\) satisfying \(c^\ast\ge 0\). 
Suppose that there exist $\varrho>0, \upsilon>0$ such that the conditions \eqref{eq:cond_4th_complex_eigen_22} and \eqref{eq:cond_4th_complex_eigen_232} hold whenever $\lambda_1\neq\lambda_3$.
Then there exists a constant \(\varepsilon>0\), independent of
\(\matr{\Gamma}\), such that
\begin{equation}\label{eq:complex4-absolute-rayleigh-ascent}
\tilde{q}(c^*,s_1^*,s_2^*) - \tilde{q}(1,0,0) 
\;\ge\; 
\varepsilon \; \dist\big(\vect{0}, \partial_{\Sphere{2}} \tilde{q}(1,0,0)\big) \;\sqrt{1-c^*}.
\end{equation}
\end{lemma}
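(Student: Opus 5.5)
The proof mirrors that of \cref{lem:real4-absolute-rayleigh-ascent}, with \cref{lem:complex4-rayleigh-ascent} playing the role of \cref{lem:real4-rayleigh-ascent}. Put $d_0=\dist(\vect{0},\partial_{\Sphere{2}}\tilde q(1,0,0))$.

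\textbf{Step 1: bound $d_0$.} As in the proof of \cref{lem:complex4-rayleigh-ascent}, $\|\grad q(1,0,0)\|=4\sqrt{\Gamma_{12}^2+\Gamma_{13}^2}$. If $\Gamma_{11}=q(1,0,0)\neq0$, then $\tilde q=\sigma_0 q$ near the identity, with $\sigma_0=\operatorname{sign}(\Gamma_{11})$, so $d_0$ equals this gradient norm. If $\Gamma_{11}=0$, the identity is a global minimizer of the nonnegative function $\tilde q$, so $0\in\partial_{\Sphere{2}}\tilde q(1,0,0)$ and $d_0=0$. In both cases $d_0\leq4\sqrt{\Gamma_{12}^2+\Gamma_{13}^2}$. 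If $\lambda_1=\lambda_3$, then $\matr{\Gamma}$ is a multiple of $\matr{I}_3$, so $\Gamma_{12}=\Gamma_{13}=0$ and the estimate is trivial. Assume $\lambda_1\neq\lambda_3$ from now on.

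\textbf{Step 2: the case $\Gamma_{11}\lambda_*\geq0$.} If $\lambda_*=0$, then $\matr{\Gamma}=0$ because $\lambda_*$ has maximal absolute value, and the estimate is trivial. Otherwise let $\sigma=\operatorname{sign}(\lambda_*)$ and apply \cref{lem:complex4-rayleigh-ascent} to $\sigma\matr{\Gamma}$.
\begin{itemize}
\item If $\lambda_*=\lambda_1$, then $\sigma=1$. The eigenbasis ordering is unchanged, and hypothesis \eqref{eq:cond_4th_complex_eigen} is exactly the first line of \eqref{eq:cond_4th_complex_eigen_22}.
\item If $\lambda_*=\lambda_3$, then $\sigma=-1$. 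The eigenvalues of $-\matr{\Gamma}$ in decreasing order are $-\lambda_3\geq-\lambda_2\geq-\lambda_1$, with eigenvectors $\vect{v},\vect{u},\vect{w}$. The ratio $\alpha$ becomes $(\lambda_2-\lambda_3)/(\lambda_1-\lambda_3)$, the role of $v_1$ is played by $w_1$, and $u$ stays in the middle. Hence \eqref{eq:cond_4th_complex_eigen} for $-\matr{\Gamma}$ is exactly the second line of \eqref{eq:cond_4th_complex_eigen_22}. This bookkeeping is the step I expect to need the most care.
\end{itemize}
In either case $\sigma q(1,0,0)=|\Gamma_{11}|=\tilde q(1,0,0)$ and $\sigma q(c^*,s_1^*,s_2^*)=|\lambda_*|=\tilde q(c^*,s_1^*,s_2^*)$. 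The lemma therefore gives a gain of at least $\frac{\varrho}{2\sqrt2(1+\varrho)}\,\|\grad q(1,0,0)\|\sqrt{1-c^*}$, which is at least $\frac{\varrho}{2\sqrt2(1+\varrho)}\,d_0\sqrt{1-c^*}$.

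\textbf{Step 3: the sign-change case $\Gamma_{11}\lambda_*<0$.} Here $\lambda_1$ and $\lambda_3$ have opposite signs.
\begin{itemize}
\item By the Rayleigh bounds, $\lambda_3\leq\Gamma_{11}\leq\lambda_1$. Since $\Gamma_{11}$ lies on the side opposite to $\lambda_*$, we get $|\Gamma_{11}|\leq\min(|\lambda_1|,|\lambda_3|)$.
\item Therefore the gain satisfies $|\lambda_*|-|\Gamma_{11}|\geq|\lambda_1|-|\lambda_3|$ in absolute value, that is, the gain is at least $|\lambda_1+\lambda_3|$.
\item By \eqref{eq:cond_4th_complex_eigen_232}, $|\lambda_1+\lambda_3|\geq\upsilon\sqrt{\Gamma_{12}^2+\Gamma_{13}^2}\geq\frac{\upsilon}{4}d_0$.
\item Since $r_1^*\geq0$, we have $c^*\geq1/\sqrt2$, so $\sqrt{1-c^*}\leq1$. (In fact $\sqrt{1-c^*}\leq1/\sqrt2$, but the weaker bound is enough to match the constant $\upsilon/4$ in \cref{prop:uniform-ascent-complex4}.)
\end{itemize}
This gives a gain of at least $\frac{\upsilon}{4}d_0\sqrt{1-c^*}$.

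\textbf{Conclusion.} Taking
\[
\varepsilon=\min\Bigl\{\tfrac{\varrho}{2\sqrt2(1+\varrho)},\,\tfrac{\upsilon}{4}\Bigr\},
\]
which is independent of $\matr{\Gamma}$, proves \eqref{eq:complex4-absolute-rayleigh-ascent}.
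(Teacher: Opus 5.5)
Your proposal is correct and follows the paper's proof essentially step for step. It uses the same three ingredients as the paper and arrives at the same constant $\varepsilon=\min\{\varrho/[2\sqrt2(1+\varrho)],\,\upsilon/4\}$. First, the bound $d_0\le4\sqrt{\Gamma_{12}^2+\Gamma_{13}^2}$. Second, applying \cref{lem:complex4-rayleigh-ascent} to $\pm\matr{\Gamma}$, with the relabeling $\vect{v},\vect{u},\vect{w}$ when $\lambda_*=\lambda_3$. Third, the Rayleigh-bound estimate $|\lambda_*|-|\Gamma_{11}|\ge|\lambda_1+\lambda_3|\ge\frac{\upsilon}{4}d_0$ in the sign-change case.
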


\begin{proof}
If all three eigenvalues are equal, the assertion is immediate. Otherwise set $d_0=\dist(\vect{0},\partial_{\Sphere{2}}\tilde q(1,0,0))$. As in the real case,
$d_0\leq4\sqrt{\Gamma_{12}^2+\Gamma_{13}^2}$, with equality when $\Gamma_{11}\ne0$ and with $d_0=0$ when $\Gamma_{11}=0$. 
First suppose $\Gamma_{11}\lambda_*\geq0$. If $\lambda_*=\lambda_1$, apply \cref{lem:complex4-rayleigh-ascent} to $\matr{\Gamma}$ and the first inequality in \eqref{eq:cond_4th_complex_eigen_22}. If $\lambda_*=\lambda_3$, apply the same lemma to $-\matr{\Gamma}$, whose decreasing eigenvalues and corresponding eigenvectors are $-\lambda_3,-\lambda_2,-\lambda_1$ and $\vect{v},\vect{u},\vect{w}$. Its spectral ratio is precisely the second one in \eqref{eq:cond_4th_complex_eigen_22}. In either case the absolute-value gain is at least
$\frac{\varrho}{2\sqrt2(1+\varrho)}d_0\sqrt{1-c^*}$. 
If $\Gamma_{11}\lambda_*<0$, the Rayleigh bounds and the choice of the eigenvalue of largest modulus give
$|\lambda_*|-|\Gamma_{11}|\geq|\lambda_1+\lambda_3|$.
By \eqref{eq:cond_4th_complex_eigen_232} this is at least $\upsilon d_0/4$, and hence at least $\frac{\upsilon}{4}d_0\sqrt{1-c^*}$. The two cases prove \eqref{eq:complex4-absolute-rayleigh-ascent} with
$\varepsilon=\min\{\varrho/[2\sqrt2(1+\varrho)],\,\upsilon/4\}>0$.
\end{proof}

\begin{lemma}
\label{lem:complex4-penalized-ascent}
Let \(\tilde{q}_{\tau}\) be defined on \(\Sphere{2}_+\) as in
\eqref{eq:tildeq_eta_def_complex}. It admits a global maximizer
\((c^*,s_1^*,s_2^*)\), and every such maximizer satisfies
\begin{equation}\label{eq:complex4-penalized-ascent}
\tilde q(c^*,s_1^*,s_2^*)-\tilde q(1,0,0)
\ge
\frac{\tau}{4M_{\mathrm{C4}}}
\dist\left(
\vect{0},
\partial_{\Sphere{2}}\tilde q(1,0,0)
\right)
\sqrt{1-c^*},
\end{equation}
where $\tau$ is as in \eqref{eq:tildeq_eta_def_complex}.
\end{lemma}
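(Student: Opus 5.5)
Existence of a maximizer follows from compactness: $\Sphere{2}_+$ is a closed subset of the compact sphere and $\tilde q_\tau$ is continuous, since it is built from absolute values of polynomials. For the estimate we will mirror the proof of \cref{lem:real4-penalized-ascent}(ii), working in three coordinates.

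\textbf{Step 1 (penalty bound).} At the identity $(1,0,0)$ the penalty vanishes, so $\tilde q_\tau(1,0,0)=\tilde q(1,0,0)$. Maximality of $(c^*,s_1^*,s_2^*)$ then gives
\[
\tilde q(c^*,s_1^*,s_2^*)-\tilde q(1,0,0)\geq \tau\bigl(c^*(|s_1^*|+|s_2^*|)+(s_1^*)^2+(s_2^*)^2\bigr).
\]

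\textbf{Step 2 (bound the penalty by $\sqrt{1-c^*}$).} Set $\sigma=\sqrt{(s_1^*)^2+(s_2^*)^2}=\sqrt{1-(c^*)^2}$. Since $c^*\geq0$ on the hemisphere, $|s_1^*|+|s_2^*|\geq\sigma$, so the penalty is at least $c^*\sigma+\sigma^2=\sigma(c^*+\sigma)$. Because $c^*,\sigma\geq0$ and $(c^*)^2+\sigma^2=1$, we have $c^*+\sigma\geq1$, and the gain is at least $\tau\sigma$. Finally,
\[
\sigma=\sqrt{(1-c^*)(1+c^*)}\geq\sqrt{1-c^*}.
\]
This step is exactly where the hemisphere restriction $c^*\geq0$ enters: on the other hemisphere the penalty could be negative.

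\textbf{Step 3 (upper bound on the subgradient distance).} We need $d_0:=\dist(\vect{0},\partial_{\Sphere{2}}\tilde q(1,0,0))\leq4M_{\mathrm{C4}}$. As in the proof of \cref{lem:complex4-absolute-rayleigh-ascent}, $d_0\leq4\sqrt{\Gamma_{12}^2+\Gamma_{13}^2}$, and this is at most $4\|\matr{\Gamma}\vect{e}_1\|\leq4\|\matr{\Gamma}\|_2\leq4M_{\mathrm{C4}}$. To justify the first bound:
\begin{itemize}
\item If $\Gamma_{11}\neq0$, then $\tilde q=\pm q$ near the identity. The computation in \cref{lem:complex4-rayleigh-ascent} gives $\|\grad q(1,0,0)\|=4\sqrt{\Gamma_{12}^2+\Gamma_{13}^2}$.
\item If $\Gamma_{11}=0$, the identity is a global minimizer of the nonnegative function $\tilde q$, so $0$ lies in the limiting subdifferential and $d_0=0$.
\end{itemize}
If $M_{\mathrm{C4}}$ happens to be zero we simply replace it by any positive constant, as the paper assumes $M_{\mathrm{C4}}>0$.

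\textbf{Combining.} The gain is at least
\[
\tau\sqrt{1-c^*}\geq\frac{\tau}{4M_{\mathrm{C4}}}\,d_0\sqrt{1-c^*},
\]
which is the claimed inequality.

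The only delicate point is Step 3, namely identifying the Riemannian subdifferential of $|q|$ at the identity in both cases $\Gamma_{11}\neq0$ and $\Gamma_{11}=0$. The lemma as stated does not need the algebraic finite-candidate claim of the real case, since an exact oracle is assumed.
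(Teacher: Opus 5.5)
Your proof is correct and follows the paper's argument essentially step for step. Maximality against the identity gives a gain of at least $\tau t^*(c^*+t^*)\geq\tau t^*\geq\tau\sqrt{1-c^*}$ on the hemisphere, and the bound $\dist(\vect{0},\partial_{\Sphere{2}}\tilde q(1,0,0))\leq4\sqrt{\Gamma_{12}^2+\Gamma_{13}^2}\leq4M_{\mathrm{C4}}$ finishes it; your case split on $\Gamma_{11}$ in Step 3 simply spells out the subdifferential bound that the paper states by reference to the real case.
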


\begin{proof}
Existence follows from continuity on the compact hemisphere
\(\Sphere{2}_+\). Maximality and the fact that the penalty vanishes at
\((1,0,0)\) give
\[
\tilde q(c^*,s_1^*,s_2^*)-\tilde q(1,0,0)
\geq\tau\bigl(c^*(|s_1^*|+|s_2^*|)+(s_1^*)^2+(s_2^*)^2\bigr).
\]
Writing \(t^*=\sqrt{(s_1^*)^2+(s_2^*)^2}\), the right-hand side is at
least \(\tau t^*(c^*+t^*)\geq\tau t^*\geq
\tau\sqrt{1-c^*}\). Moreover,
\(\dist(\vect{0},\partial_{\Sphere{2}}\tilde q(1,0,0))
\leq4\sqrt{\Gamma_{12}^2+\Gamma_{13}^2}
\leq4M_{\mathrm{C4}}\).
Combining these estimates proves \eqref{eq:complex4-penalized-ascent}.
\end{proof}

\begin{proof}[Proof of \cref{prop:uniform-ascent-complex4}]
If the candidate is the identity, the assertion is immediate.
Otherwise, the two nonzero selected contractions make the active
branch in \eqref{eq:subpro_hk_complex} unique and smooth at \((1,0,0)\). For the quadratic
branch,
\cref{lem:complex4-absolute-rayleigh-ascent,lem:complex4-penalized-ascent}
give the required gain, according as the spectral tests hold or fail.
For the linear branch, write
\(\vect{r}^*=(x^*,y^*,z^*)^\T
=\operatorname{sign}(b_1)\vect{b}/\|\vect{b}\|\) and
\(t^*=\sqrt{(y^*)^2+(z^*)^2}\). Its gain is
\(\|\vect{b}\|(1-x^*)\), its spherical gradient norm at the identity
is \(2\|\vect{b}\|t^*\), and
\(1-x^*\geq t^*\sqrt{1-c^*}\). Thus every branch has a gain bounded
below by the corresponding constant in the statement times its
gradient norm and \(\sqrt{1-c^*}\). Since the selected branch is
uniquely active, the gradient norm of \(h_{k,\mathrm{C4}}\) is twice
that of the branch, while its gain is at least twice the branch gain.
Finally,
\(\|\matr{\Psi}(c^*,s_1^*,s_2^*)-\matr{I}_2\|_F
=2\sqrt{1-c^*}\). Taking the minimum of the branch constants gives
\(\varepsilon_{\mathrm{C4}}\).
\end{proof}
\section{Solving the subproblems for the third-order cumulant tensor case}\label{sec:3rd-order-sub-complex-real}

\subsection{Third-order real case}
For the real third-order model \eqref{cost-fn-complex-l1-diag-real},
we first express the two selected diagonal contractions as cubic forms.

\begin{lemma}
\label{lem:real3-representation} 
Let $\tens{W} \in \RR^{2\times 2\times 2}$ be a symmetric tensor. 
Let $\matr{\Psi}(c,s)\in\ON{2}$ be as in \eqref{eq:Given_rotations}, and define
$\tens{T} = \tens{W} \contr{1} \matr{\Psi}^\T \contr{2} \matr{\Psi}^\T \contr{3} \matr{\Psi}^\T.$
Define
$\vect{x} = 
(c, s)^{\T} \in \RR^2.$ 
Then
\begin{align*}
\tenselem{T}_{111} 
&=
\tenselem{W}_{111}c^{3}
+3\tenselem{W}_{112}c^{2}s
+3\tenselem{W}_{122}cs^{2}
+\tenselem{W}_{222}s^{3},
\\
\tenselem{T}_{222} &=
-\tenselem{W}_{111}s^{3}
+3\tenselem{W}_{112}s^{2}c
-3\tenselem{W}_{122}sc^{2}
+\tenselem{W}_{222}c^{3}.
\end{align*}
\end{lemma}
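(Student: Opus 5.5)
The plan is a direct multilinear expansion. By the definition of the mode products, $\tenselem{T}_{pqr}=\sum_{a,b,l}\tenselem{W}_{abl}\Psi_{ap}\Psi_{bq}\Psi_{lr}$. The reason is that contracting mode $m$ with $\matr{\Psi}^\T$ replaces the $m$th index $a$ by $\sum_a (\matr{\Psi}^\T)_{pa}=\sum_a\Psi_{ap}$. I will first confirm this convention against \eqref{cost-fn-complex-l1-diag-real}. There $\tens{W}=\tens{A}\contr{1}\matr{Q}^\T\cdots$ has diagonal entries $\tens{A}\contr{1}\vect{q}_p^\T\cdots$, so column $p$ of the rotating matrix is what is contracted. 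For the first diagonal entry the relevant vector is therefore the first column of $\matr{\Psi}$ in \eqref{eq:Given_rotations}, namely $(c,s)^\T$. For the second diagonal entry it is the second column, $(-s,c)^\T$.

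Hence $\tenselem{T}_{111}=\tens{W}\contr{1}\vect{x}^\T\contr{2}\vect{x}^\T\contr{3}\vect{x}^\T$ is the cubic form $\sum_{a,b,l}\tenselem{W}_{abl}x_ax_bx_l$ with $\vect{x}=(c,s)^\T$. I will group the eight terms by the number of indices equal to $2$ and use full symmetry of $\tens{W}$: one term is $\tenselem{W}_{111}$, three equal $\tenselem{W}_{112}$, three equal $\tenselem{W}_{122}$, and one is $\tenselem{W}_{222}$. This gives the binomial-type expansion $\tenselem{W}_{111}c^3+3\tenselem{W}_{112}c^2s+3\tenselem{W}_{122}cs^2+\tenselem{W}_{222}s^3$, which is the first formula.

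For $\tenselem{T}_{222}$ I will reuse the same cubic form with $\vect{y}=(-s,c)^\T$ in place of $\vect{x}$. It yields $\tenselem{W}_{111}(-s)^3+3\tenselem{W}_{112}(-s)^2c+3\tenselem{W}_{122}(-s)c^2+\tenselem{W}_{222}c^3$, which simplifies to the stated expression.

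There is no real obstacle here. The only point needing care is the transpose and column convention, since using rows instead of columns would flip the sign of $s$. I will settle it once by matching with the definition of $g_p$ in \eqref{cost-fn-complex-l1-diag-real}, and with the analogous fourth-order representation in \cref{lem:real4-representation}, whose $\vect{r}(c,s)=(2c^2-1,2cs)^\T$ is consistent with contracting $(c,s)^\T$.
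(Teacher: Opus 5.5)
Your proposal is correct: with the convention $\tenselem{T}_{pqr}=\sum_{a,b,l}\tenselem{W}_{abl}\Psi_{ap}\Psi_{bq}\Psi_{lr}$, the two diagonal entries are the cubic form of $\tens{W}$ evaluated at the columns $(c,s)^\T$ and $(-s,c)^\T$ of $\matr{\Psi}(c,s)$, and your expansions reproduce both displayed formulas exactly. The paper gives no written proof and treats the lemma as this direct multilinear expansion, so your argument, including the column-convention check against the real fourth-order representation, is essentially the intended one.
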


By \cref{lem:real3-representation}, the reduced elementary objective is
\begin{equation}
\label{eq:hk_tensor_form}
h_{k,\mathrm{R3}}(c,s) = h_{k,\mathrm{R3}}(\matr{\Psi})
= \max \Bigl\{
\bigl| \tens{C}^{+} \contr{1} \vect{x}^\T
\contr{2} \vect{x}^\T\contr{3} \vect{x}^\T \bigr|, \;
\bigl| \tens{C}^{-} \contr{1} \vect{x}^\T
\contr{2} \vect{x}^\T\contr{3} \vect{x}^\T \bigr| \Bigr\}.
\end{equation}
The identity $|a|+|b|=\max\{|a+b|,|a-b|\}$ for real $a,b$ gives
\eqref{eq:hk_tensor_form},
where $\tens{C}^{+},\tens{C}^{-}\in\RR^{2\times2\times2}$ are symmetric
tensors defined by the following four independent entries each:
\begin{align*}
\tenselem{C}^{+}_{111} &= \tenselem{W}_{111} + \tenselem{W}_{222},&
\tenselem{C}^{+}_{112} &= \tenselem{W}_{112} - \tenselem{W}_{122},&
\tenselem{C}^{+}_{122} &= \tenselem{W}_{112} + \tenselem{W}_{122},&
\tenselem{C}^{+}_{222} &= -\tenselem{W}_{111} + \tenselem{W}_{222},\\
\tenselem{C}^{-}_{111} &= \tenselem{W}_{111} - \tenselem{W}_{222},&
\tenselem{C}^{-}_{112} &= \tenselem{W}_{112} + \tenselem{W}_{122},&
\tenselem{C}^{-}_{122} &= \tenselem{W}_{122} - \tenselem{W}_{112},&
\tenselem{C}^{-}_{222} &= \tenselem{W}_{111} + \tenselem{W}_{222}.
\end{align*} 
All remaining entries of $\tens{C}^{+}$ and $\tens{C}^{-}$ are obtained by
permuting their indices.
Denote the two branch functions in \eqref{eq:hk_tensor_form} by
$\tilde q(c,s)\eqdef |q(c,s)|$ and
$\tilde p(c,s)\eqdef |p(c,s)|$,
where
\begin{equation}\label{eq:q_func_cs_3rd_real}
q(c,s)\eqdef \tens{C}^{+} \contr{1} \vect{x}^\T
\contr{2} \vect{x}^\T\contr{3} \vect{x}^\T,\ \ 
p(c,s)\eqdef \tens{C}^{-} \contr{1} \vect{x}^\T
\contr{2} \vect{x}^\T\contr{3} \vect{x}^\T. 
\end{equation} 
Fix $0<\tau<1$. We also define two auxiliary functions:
\begin{equation}\label{eq:tildeq_eta_def_3rdorder}
\tilde{q}_{\tau}(c,s)\eqdef 
\tilde{q}(c,s)-\tau|s|,\ \ \tilde{p}_{\tau}(c,s)\eqdef 
\tilde{p}(c,s)-\tau|s|.
\end{equation}

In the present two-dimensional setting, the one-branch auxiliary
problems associated with $q(c,s)$ and $p(c,s)$ in
\eqref{eq:q_func_cs_3rd_real} can be solved by finite algebraic
candidate enumeration, as shown in the proof of
\cref{lem:real3-penalized-ascent}. For the convergence analysis of the
full reduced objective \(h_{k,\mathrm{R3}}\), we use the
branch-selection strategy described in
\subalgref{alg:branch-solver-h-complex-3real}.
There exists a constant \(M_{\mathrm{R3}}>0\) such that
\(\|\tens{C}^{+}\|_F\leq M_{\mathrm{R3}}\) and
\(\|\tens{C}^{-}\|_F\leq M_{\mathrm{R3}}\) throughout the algorithm.
In particular, every tensor entry has magnitude at most \(M_{\mathrm{R3}}\).
These bounds are uniform because
the transformed tensor depends continuously on $\matr{U}$ in the
compact orthogonal group.

{
\floatname{algorithm}{Subalgorithm}
\renewcommand{\thealgorithm}{CG-R3}

\begin{algorithm}
\caption{Candidate generation for the reduced objective \eqref{eq:hk_tensor_form}}\label{alg:branch-solver-h-complex-3real}
\begin{algorithmic}[1]
\STATE \textbf{Input:} the elementary function \eqref{eq:hk_tensor_form} and a fixed constant $0<\tau<1$.
\STATE \textbf{Output:} a rotation matrix
$\matr{\Psi}(c^{\ast}, s^{\ast})$.
\IF{$|\tenselem{C}^{+}_{111}|\geq |\tenselem{C}^{-}_{111}|$}
\STATE Compute a global maximizer $(c^*,s^*)$ of \(\tilde{q}_{\tau}(c,s)\) in \eqref{eq:tildeq_eta_def_3rdorder} over $c^2+s^2=1$, $c\geq0$.
\ELSIF{$|\tenselem{C}^{-}_{111}|>|\tenselem{C}^{+}_{111}|$}
\STATE Compute a global maximizer $(c^*,s^*)$ of \(\tilde{p}_{\tau}(c,s)\) in \eqref{eq:tildeq_eta_def_3rdorder} over $c^2+s^2=1$, $c\geq0$.
\ENDIF
\STATE Return $\matr{\Psi}(c^*,s^*)$.
\end{algorithmic}
\end{algorithm}
} 

The selected branch is active at the identity, while the penalty in
\eqref{eq:tildeq_eta_def_3rdorder} is nonnegative and vanishes there.
Therefore, every output of
\subalgref{alg:branch-solver-h-complex-3real} is nondecreasing for the
true elementary objective.

The next estimate supplies the sufficient ascent required in
\cref{subsec:conver_gence_general}.

\begin{proposition}
\label{prop:uniform-ascent-real3}
Let $h_{k,\mathrm{R3}}(c,s)$ be defined as in
\eqref{eq:hk_tensor_form}. Suppose
$g_{i_k}(\matr{U}_{k-1})g_{j_k}(\matr{U}_{k-1})\neq0$.
Then the candidate returned by
\subalgref{alg:branch-solver-h-complex-3real} satisfies
\[
h_{k,\mathrm{R3}}(c^*,s^*)-h_{k,\mathrm{R3}}(1,0)
\geq
\varepsilon_{\mathrm{R3}}\,
\dist\left(\vect{0},
\partial_{\Sphere{1}}h_{k,\mathrm{R3}}(1,0)\right)
\left\|\matr{\Psi}(c^*,s^*)-\matr{I}_2\right\|_F,
\]
where
\(\varepsilon_{\mathrm{R3}}\eqdef
\tau/(8M_{\mathrm{R3}})>0\) is independent of \(k\).
\end{proposition}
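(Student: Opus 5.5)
The plan is to reproduce the penalized-ascent argument of \cref{lem:real4-penalized-ascent,lem:complex4-penalized-ascent}, applied to the branch selected by \subalgref{alg:branch-solver-h-complex-3real}. By the symmetry $\tens{C}^{+}\leftrightarrow\tens{C}^{-}$ it suffices to treat the case $|\tenselem{C}^{+}_{111}|\geq|\tenselem{C}^{-}_{111}|$, in which $(c^*,s^*)$ maximizes $\tilde q_\tau$ over the half circle $c\geq0$. There are four steps.

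\textbf{Step 1: the selected branch is uniquely active and smooth at the identity.} At $(c,s)=(1,0)$ we have
\[
q(1,0)=\tenselem{W}_{111}+\tenselem{W}_{222},\qquad p(1,0)=\tenselem{W}_{111}-\tenselem{W}_{222}.
\]
The hypothesis $g_{i_k}g_{j_k}\neq0$ means $\tenselem{W}_{111}\tenselem{W}_{222}\neq0$ for the current subtensor. Hence $|q(1,0)|\neq|p(1,0)|$, so the branch with the larger value, here $\tilde q$, is the unique maximizer in \eqref{eq:hk_tensor_form}. Its value $|\tenselem{W}_{111}|+|\tenselem{W}_{222}|$ is positive. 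Consequently $h_{k,\mathrm{R3}}$ coincides with $|q|$, and $q$ keeps a constant nonzero sign in a neighborhood of $(1,0)$ on $\Sphere{1}$. Thus $h_{k,\mathrm{R3}}$ is smooth there, and its limiting Riemannian subdifferential is the singleton consisting of the spherical derivative of $\pm q$.

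\textbf{Step 2: bound the subgradient distance.} Differentiate $q(\cos\theta,\sin\theta)$ at $\theta=0$; the derivative is $3\tenselem{C}^{+}_{112}$. Therefore
\[
\dist\bigl(\vect{0},\partial_{\Sphere{1}}h_{k,\mathrm{R3}}(1,0)\bigr)=3|\tenselem{C}^{+}_{112}|\leq3M_{\mathrm{R3}}\leq4M_{\mathrm{R3}}.
\]

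\textbf{Step 3: lower-bound the gain via maximality.} The penalty $\tau|s|$ is nonnegative and vanishes at $(1,0)$. Maximality of $(c^*,s^*)$ for $\tilde q_\tau$ therefore gives
\[
h_{k,\mathrm{R3}}(c^*,s^*)-h_{k,\mathrm{R3}}(1,0)\geq\tilde q(c^*,s^*)-\tilde q(1,0)\geq\tau|s^*|.
\]
The first inequality uses $h\geq\tilde q$ everywhere and $h=\tilde q$ at the identity. Since $0\leq c^*\leq1$, we have $1-c^*\leq1-(c^*)^2=(s^*)^2$, so $|s^*|\geq\sqrt{1-c^*}$. Moreover $\|\matr{\Psi}(c^*,s^*)-\matr{I}_2\|_F=2\sqrt{1-c^*}$.

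\textbf{Step 4: combine.} Using Step 2 in the form $1\geq d/(4M_{\mathrm{R3}})$, where $d$ denotes the subgradient distance, we obtain
\[
\text{gain}\geq\tau\sqrt{1-c^*}\geq\frac{\tau}{4M_{\mathrm{R3}}}\,d\,\sqrt{1-c^*}=\frac{\tau}{8M_{\mathrm{R3}}}\,d\,\|\matr{\Psi}(c^*,s^*)-\matr{I}_2\|_F.
\]
This is exactly $\varepsilon_{\mathrm{R3}}=\tau/(8M_{\mathrm{R3}})$.

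The main point needing care is Step 1, since it is what justifies replacing the limiting subdifferential of the max by the gradient of a single branch; it rests entirely on $\tenselem{W}_{111}\tenselem{W}_{222}\neq0$. A secondary point is whether a maximizer exists at all; this follows from compactness of the half circle, with a finite algebraic enumeration as in \cref{lem:real4-penalized-ascent}.
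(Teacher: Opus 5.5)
Your proposal is correct and follows essentially the same argument as the paper. The nonvanishing contractions make the selected branch uniquely active and smooth at the identity, and maximality of the penalized branch gives a gain of at least $\tau|s^*|\geq\tau\sqrt{1-c^*}$. The spherical gradient norm is $3|\tenselem{C}^{\pm}_{112}|\leq 4M_{\mathrm{R3}}$, and $\|\matr{\Psi}(c^*,s^*)-\matr{I}_2\|_F=2\sqrt{1-c^*}$ then yields $\varepsilon_{\mathrm{R3}}=\tau/(8M_{\mathrm{R3}})$. The only difference is that you inline \cref{lem:real3-penalized-ascent}(ii); thanks to the nonvanishing hypothesis, you can also skip that lemma's degenerate case $\tenselem{C}^{+}_{111}=0$.
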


We first establish the corresponding estimate for either penalized branch.

\begin{lemma}
\label{lem:real3-penalized-ascent}
Let $\tilde{q}_{\tau}(c,s)$ be defined as in \eqref{eq:tildeq_eta_def_3rdorder}. 
Then \\
(i) a global maximizer $(c^*,s^*)$ of
\(\tilde{q}_{\tau}\), with $c^*\geq0$, can be obtained from a finite
algebraic candidate set.\\
(ii) we have that 
\[
\tilde q(c^*,s^*)-\tilde q(1,0)
\geq
\frac{\tau}{4M_{\mathrm{R3}}}
\dist
\left(
\vect{0},\partial_{\Sphere{1}}\tilde q(1,0)
\right)
\sqrt{1-c^*},
\]
where $\tau$ is as in \eqref{eq:tildeq_eta_def_3rdorder}.
\end{lemma}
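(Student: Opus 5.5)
The plan mirrors the proof of \cref{lem:real4-penalized-ascent}, with the cubic form in place of the quadratic one.

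\textbf{Part (i).} By continuity and compactness, $\tilde q_\tau$ attains its maximum on the closed semicircle $\{c^2+s^2=1,\ c\geq0\}$. To compute a maximizer, I remove both absolute values by splitting:
\begin{itemize}[label={}]
\item use $|q|=\max_{\sigma=\pm1}\sigma q$;
\item split the semicircle into the two quarter-arcs $s\geq0$ and $s\leq0$, on which $|s|=\pm s$.
\end{itemize}
On each of the four resulting pieces the objective is $\sigma q(c,s)\mp\tau s$, a cubic form plus a linear term. Parametrize by $c=\cos\phi$, $s=\sin\phi$ with $\phi\in[-\pi/2,\pi/2]$, and substitute $t=\tan(\phi/2)$. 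Multiplying the stationarity equation by a power of $(1+t^2)$ gives a polynomial equation in $t$ of bounded degree (at most six).

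The candidate set then consists of:
\begin{itemize}[label={}]
\item the real roots of these polynomials lying in the admissible range;
\item the arc endpoints $\phi=0,\pm\pi/2$.
\end{itemize}
If a stationary polynomial vanishes identically, the objective is constant on that arc, and any point of the arc may be taken. Comparing the finitely many candidate values yields a global maximizer with $c^*\geq0$.

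\textbf{Part (ii).} Maximality against the point $(1,0)$, where the penalty vanishes, gives
\[
\tilde q(c^*,s^*)-\tilde q(1,0)\geq\tau|s^*|.
\]
Since $c^*\in[0,1]$, we have $1-c^*\leq 1-(c^*)^2=(s^*)^2\leq|s^*|$ and also $1-c^*\leq1$. Hence $\sqrt{1-c^*}\leq|s^*|$, so the gain is at least $\tau\sqrt{1-c^*}$.

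It remains to show that $\dist(\vect{0},\partial_{\Sphere{1}}\tilde q(1,0))\leq 4M_{\mathrm{R3}}$. Differentiating $q(\cos\theta,\sin\theta)$ at $\theta=0$ gives $3\tenselem{C}^{+}_{112}$. There are two cases:
\begin{itemize}[label={}]
\item If $q(1,0)=\tenselem{C}^{+}_{111}\neq0$, then $\tilde q$ is smooth near $(1,0)$. The Riemannian subdifferential is the singleton with norm $3|\tenselem{C}^{+}_{112}|\leq3M_{\mathrm{R3}}$.
\item If $\tenselem{C}^{+}_{111}=0$, then $(1,0)$ is a global minimizer of the nonnegative function $|q|$. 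Hence $0$ lies in the limiting subdifferential, and the distance is $0$.
\end{itemize}
In either case the distance is at most $3M_{\mathrm{R3}}\leq4M_{\mathrm{R3}}$. This gives
\[
\tau\sqrt{1-c^*}\geq\frac{\tau}{4M_{\mathrm{R3}}}\dist\bigl(\vect{0},\partial_{\Sphere{1}}\tilde q(1,0)\bigr)\sqrt{1-c^*}.
\]

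\textbf{Main obstacle.} The only real care needed is in part (i): handling degenerate pieces so that the candidate set is genuinely finite. This covers identically vanishing stationary equations and a root at $t=\pm\infty$, which corresponds to the endpoint $\phi=\pi$ and is excluded by $c\geq0$. As in the real fourth-order case, these are absorbed by keeping the arc endpoints in the candidate set and picking an arbitrary point on any arc where the objective is constant.
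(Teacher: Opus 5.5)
Your proof is correct. Part (ii) coincides with the paper's argument:
\begin{itemize}
\item maximality against $(1,0)$ gives a gain of at least $\tau|s^*|$;
\item the bound $\sqrt{1-c^*}\le|s^*|$ follows from $1-c^*\le(s^*)^2$, which the paper writes as $|s^*|=\sqrt{(1-c^*)(1+c^*)}$;
\item the same two-case estimate $\dist(\vect{0},\partial_{\Sphere{1}}\tilde q(1,0))\le 3|\tenselem{C}^{+}_{112}|$ closes the proof.
\end{itemize}
Only $1-c^*\le(s^*)^2$ is needed for the square-root step; the other inequalities in your chain are harmless but unnecessary.

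Part (i) differs in bookkeeping. The paper works on the open semicircle $c>0$ with $t=s/c$ and splits into regions of fixed $\operatorname{sign}q$ and $\operatorname{sign}s$. Homogeneity of the cubic form then reduces the stationarity equation to the cubic $\sigma D(t)-\tau\eta(1+t^2)=0$, which can be solved in closed form. The price is that the zeros of $q$ and the points with $c=0$ must be added separately as breakpoints.

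You instead follow the paper's fourth-order argument. Writing $|q|=\max_{\sigma}\sigma q$ makes each piece $\sigma q\mp\tau s$ smooth on a whole quarter-arc, so the zero set of $q$ never has to be enumerated. The half-angle substitution also keeps $c=0$ at the finite endpoints $t=\pm1$. The price is stationarity polynomials of degree up to six rather than three.

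These two choices are independent. Combining your $\max_\sigma$ splitting with the paper's $t=s/c$ would give cubic equations without the $q=0$ breakpoints, provided $c=0$ is kept as an endpoint. Either way the candidate set is finite and algebraic, which is all the lemma asserts.
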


\begin{proof}
For (i), compactness gives a global maximizer, and the identities
$q(-c,-s)=-q(c,s)$ and $|-s|=|s|$ allow one with $c^*\geq0$.
For a finite algebraic construction on the open semicircle $c>0$,
put $t=s/c$ and
$P(t)=\tenselem{C}^{+}_{111}
+3\tenselem{C}^{+}_{112}t
+3\tenselem{C}^{+}_{122}t^2
+\tenselem{C}^{+}_{222}t^3$.
Then $q(c,s)=P(t)/(1+t^2)^{3/2}$. On a region with
$\operatorname{sign}q=\sigma$ and $\operatorname{sign}s=\eta$,
where $\sigma,\eta\in\{-1,1\}$, differentiation with respect to
$\theta$ in $(c,s)=(\cos\theta,\sin\theta)$ gives the stationarity equation
\begin{equation}\label{eq:real3_aux_stationary}
\sigma D(t)-\tau\eta(1+t^2)=0,\ \ 
D(t)=(1+t^2)P'(t)-3tP(t).
\end{equation}
The fourth-degree terms cancel, leaving
$$D(t)=3\tenselem{C}^{+}_{112}
+(6\tenselem{C}^{+}_{122}-3\tenselem{C}^{+}_{111})t
+(3\tenselem{C}^{+}_{222}-6\tenselem{C}^{+}_{112})t^2
-3\tenselem{C}^{+}_{122}t^3.$$
Thus the admissible real roots of \eqref{eq:real3_aux_stationary},
together with the boundary points $s=0$, $q=0$, and $c=0$,
form a finite candidate list. The equation $q=0$ also reduces to
a cubic on $c>0$.

For completeness, identically vanishing polynomials do not invalidate
this enumeration. If $P\equiv0$, the point $(1,0)$ maximizes
$-\tau|s|$. If a stationarity polynomial vanishes identically on a
sign region, the objective is constant there, so its value is attained
at a boundary point by continuity. After these cases are treated,
comparison of the finite candidate values proves (i), including
degenerate cubic or lower-degree equations.

For (ii), optimality at $(c^*,s^*)$ relative to $(1,0)$ gives
\begin{equation}\label{eq:real3_aux_increase}
\tilde q(c^*,s^*)-\tilde q(1,0)\geq\tau|s^*|
\geq\tau\sqrt{1-c^*}.
\end{equation}
The last inequality follows from $c^*\geq0$ and
$|s^*|=\sqrt{(1-c^*)(1+c^*)}$.
The derivative of $q(\cos\theta,\sin\theta)$ at $\theta=0$ is
$3\tenselem{C}^{+}_{112}$. If $\tenselem{C}^{+}_{111}\neq0$, the
spherical subdifferential of
$|q|$ is a singleton of norm $3|\tenselem{C}^{+}_{112}|$.
If $\tenselem{C}^{+}_{111}=0$,
$|q|$ has a local minimum at $(1,0)$, so its regular, and hence
limiting, spherical subdifferential contains zero. In both cases,
$\dist(\vect{0},\partial_{\Sphere{1}}\tilde q(1,0))
\leq3|\tenselem{C}^{+}_{112}|
\leq3M_{\mathrm{R3}}\leq4M_{\mathrm{R3}}$.
Combining this bound with \eqref{eq:real3_aux_increase} proves (ii).
\end{proof}

\begin{proof}[Proof of \cref{prop:uniform-ascent-real3}]
The two nonzero selected contractions make the active branch in
\eqref{eq:hk_tensor_form} unique and smooth at \((1,0)\).
Denote this branch by \(\tilde g\). The estimate in \cref{lem:real3-penalized-ascent}
applies whether \(\tilde g\) is the \(\tilde q\)- or
\(\tilde p\)-branch and gives a gain of at least
\(\tau/(4M_{\mathrm{R3}})\) times its gradient norm and
\(\sqrt{1-c^*}\). Since \(h_{k,\mathrm{R3}}\geq\tilde g\), with
equality at the identity, the same estimate holds for the gain of the
reduced objective. Finally,
\(\|\matr{\Psi}(c^*,s^*)-\matr{I}_2\|_F=2\sqrt{1-c^*}\), which gives
\(\varepsilon_{\mathrm{R3}}=\tau/(8M_{\mathrm{R3}})\).
\end{proof}

\subsection{Third-order complex case}
For the complex third-order model \eqref{cost-fn-complex-l1-diag},
we first express the two selected diagonal contractions as cubic forms.

\begin{lemma}
\label{lem:complex3-representation}
Let $\tens{W} \in \CC^{2\times 2\times 2}$ satisfy
$\tenselem{W}_{ijk} = \tenselem{W}_{ikj}$ for all $1\leq i,j,k\leq 2.$ 
Let $\matr{\Psi}(c,s_1,s_2)\in\UN{2}$ be as in \eqref{eq:2.10}, and define
$\tens{T} = \tens{W} \contr{1} \matr{\Psi}^\H \contr{2} \matr{\Psi}^\T \contr{3} \matr{\Psi}^\T.$ 
Define
$\vect{x} = 
(c, s_1, s_2)^{\T} \in \Sphere{2}.$ 
Then
\begin{equation}
\begin{aligned}
|\tenselem{T}_{111}| + |\tenselem{T}_{222}| 
=
\max_{(\alpha_1,\alpha_2)\in \Sphere{1},\;(\beta_1,\beta_2)\in \Sphere{1}}
\left(\alpha_1\tens{C}_{\mathrm R}+\alpha_2\tens{C}_{\mathrm I}
+\beta_1\tens{S}_{\mathrm R}+\beta_2\tens{S}_{\mathrm I}\right)
\contr{1}\vect{x}^\T\contr{2}\vect{x}^\T\contr{3}\vect{x}^\T,
\end{aligned}
\label{eq:real-third-order-tensor-form}
\end{equation}
where the real tensors
$\tens{C}_{\mathrm R},\tens{C}_{\mathrm I},
\tens{S}_{\mathrm R},\tens{S}_{\mathrm I}
\in \RR^{3\times 3\times 3}$
are defined by
\begin{equation}
\tens{C}_{\mathrm R}\eqdef\operatorname{Re}(\tens{C}),
\ \ 
\tens{C}_{\mathrm I}\eqdef-\operatorname{Im}(\tens{C}),
\ \ 
\tens{S}_{\mathrm R}\eqdef\operatorname{Re}(\tens{S}),
\ \ 
\tens{S}_{\mathrm I}\eqdef-\operatorname{Im}(\tens{S}),
\label{eq:complex3-local-tensors}
\end{equation}
and $\tens{C},\tens{S}\in \CC^{3\times 3\times 3}$ are given by
$\tens{C}\eqdef 
\tens{W}\contr{1} \matr{P}_1^\T
\contr{2} \matr{P}_2^\T
\contr{3} \matr{P}_2^\T$, 
$\tens{S}
=
\tens{W}\contr{1} \matr{Q}_1^\T
\contr{2} \matr{Q}_2^\T
\contr{3} \matr{Q}_2^\T,$ 
with
\[
\matr{P}_1=
\begin{bmatrix}
1&0&0\\
0&1&i
\end{bmatrix},
\ \ 
\matr{P}_2=
\begin{bmatrix}
1&0&0\\
0&1&-i
\end{bmatrix},
\ \ 
\matr{Q}_1=
\begin{bmatrix}
0&-1&i\\
1&0&0
\end{bmatrix},
\ \ 
\matr{Q}_2=
\begin{bmatrix}
0&-1&-i\\
1&0&0
\end{bmatrix}.
\]
\end{lemma}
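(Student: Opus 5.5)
The plan is a direct computation: first write each of $\tenselem{T}_{111}$ and $\tenselem{T}_{222}$ as a complex cubic form in the real vector $\vect{x}=(c,s_1,s_2)^\T$, and then convert each modulus into a maximum of real linear functionals.

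\emph{Step 1 (diagonal entries as contractions).} By the definition of $\tens{T}$, each diagonal entry is $\tens{W}$ contracted in every mode with a column of $\matr{\Psi}$, conjugated in the first mode because that mode uses $\matr{\Psi}^\H$. The first column of $\matr{\Psi}(c,s_1,s_2)$ in \eqref{eq:2.10} is $(c,\,s_1-is_2)^\T$, and its conjugate is $(c,\,s_1+is_2)^\T$. Hence
\[
\tenselem{T}_{111}=\tens{W}\contr{1}(c,s_1+is_2)\contr{2}(c,s_1-is_2)\contr{3}(c,s_1-is_2).
\]
The second column is $(-(s_1+is_2),\,c)^\T$, and its conjugate is $(-(s_1-is_2),\,c)^\T$. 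This gives the analogous formula for $\tenselem{T}_{222}$.

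\emph{Step 2 (factoring out the real vector).} Check that $\matr{P}_1\vect{x}=(c,s_1+is_2)^\T$ and $\matr{P}_2\vect{x}=(c,s_1-is_2)^\T$. Likewise $\matr{Q}_1\vect{x}=(-s_1+is_2,c)^\T$ and $\matr{Q}_2\vect{x}=(-s_1-is_2,c)^\T$. Contracting with $\matr{P}_1\vect{x}$ in a mode equals first contracting $\tens{W}$ in that mode with $\matr{P}_1^\T$ and then contracting with $\vect{x}$. Therefore
\[
\tenselem{T}_{111}=\tens{C}\contr{1}\vect{x}^\T\contr{2}\vect{x}^\T\contr{3}\vect{x}^\T,
\qquad
\tenselem{T}_{222}=\tens{S}\contr{1}\vect{x}^\T\contr{2}\vect{x}^\T\contr{3}\vect{x}^\T.
\]
The partial symmetry $\tenselem{W}_{ijk}=\tenselem{W}_{ikj}$ is not essential here. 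It only ensures that $\tens{C}$ and $\tens{S}$ inherit symmetry in modes $2$ and $3$.

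\emph{Step 3 (moduli as maxima).} Since $\vect{x}$ is real, taking real and imaginary parts commutes with contraction. Thus
\[
\Re\tenselem{T}_{111}=\tens{C}_{\mathrm R}(\vect{x},\vect{x},\vect{x}),
\qquad
-\Im\tenselem{T}_{111}=\tens{C}_{\mathrm I}(\vect{x},\vect{x},\vect{x}),
\]
and similarly $\tens{S}_{\mathrm R}$, $\tens{S}_{\mathrm I}$ give the real part and minus the imaginary part of $\tenselem{T}_{222}$. For any complex $z$, we have
\[
|z|=\max_{(\alpha_1,\alpha_2)\in\Sphere{1}}\bigl(\alpha_1\Re z-\alpha_2\Im z\bigr),
\]
by Cauchy--Schwarz in $\RR^2$. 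Apply this to $\tenselem{T}_{111}$ with $(\alpha_1,\alpha_2)$ and independently to $\tenselem{T}_{222}$ with $(\beta_1,\beta_2)$. The two maximizations are over independent variables, so the sum of the maxima is the maximum over $\Sphere{1}\times\Sphere{1}$ of the sum. Linearity of the cubic form in the tensor then yields \eqref{eq:real-third-order-tensor-form}.

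The main obstacle is bookkeeping rather than any conceptual difficulty. One must fix the contraction convention $\contr{k}$ consistently with how $\matr{P}_\ell^\T$ and $\matr{Q}_\ell^\T$ enter the definitions of $\tens{C}$ and $\tens{S}$. One must also track which factor is conjugated, since this produces the sign $+i$ in $\matr{P}_1,\matr{Q}_1$ versus $-i$ in $\matr{P}_2,\matr{Q}_2$. I would verify the sign choices by checking the case $s_2=0$, where everything must reduce to the real formulas of \cref{lem:real3-representation}.
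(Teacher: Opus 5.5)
Your proposal is correct. The identities $\matr{P}_1\vect{x}=(c,s_1+is_2)^\T$, $\matr{P}_2\vect{x}=(c,s_1-is_2)^\T$, $\matr{Q}_1\vect{x}=(-s_1+is_2,c)^\T$ and $\matr{Q}_2\vect{x}=(-s_1-is_2,c)^\T$ give $\tenselem{T}_{111}=\tens{C}[\vect{x},\vect{x},\vect{x}]$ and $\tenselem{T}_{222}=\tens{S}[\vect{x},\vect{x},\vect{x}]$, and the Cauchy--Schwarz representation of $|z|$ over $\Sphere{1}$, with independent phases for the two entries, completes the argument. The paper states this lemma without a separate proof, and your argument is the direct computation it relies on; you are also right that the partial symmetry $\tenselem{W}_{ijk}=\tenselem{W}_{ikj}$ is not needed for the identity itself.
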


By \cref{lem:complex3-representation}, the reduced elementary objective is
\begin{equation}
h_{k,\mathrm{C3}}(c,s_1,s_2)
= h_{k,\mathrm{C3}}(\matr{\Psi}) =
\max_{(\alpha_1,\alpha_2)\in\Sphere{1},\;(\beta_1,\beta_2)\in\Sphere{1}}
\widehat h_{k,\mathrm{C3}}
\bigl((c,s_1,s_2),(\alpha_1,\alpha_2),(\beta_1,\beta_2)\bigr).
\label{eq:hk-d3-r1}
\end{equation}
The auxiliary function is defined as
\begin{align}
\widehat h_{k,\mathrm{C3}}:
\Sphere{2}\times \Sphere{1}\times \Sphere{1}
&\to \RR, \notag\\
\bigl((c,s_1,s_2),(\alpha_1,\alpha_2),(\beta_1,\beta_2)\bigr)
&\mapsto
\left(
\alpha_1\tens{C}_{\mathrm R}+\alpha_2\tens{C}_{\mathrm I}
+\beta_1\tens{S}_{\mathrm R}+\beta_2\tens{S}_{\mathrm I}
\right)
\contr{1}\vect{x}^{\T}
\contr{2}\vect{x}^{\T}
\contr{3}\vect{x}^{\T}.
\label{eq:hk-d3-r1-aux}
\end{align}
Here, the real coefficient tensors are constructed according to
\eqref{eq:complex3-local-tensors} from
\(\tens{W}(\matr{U}_{k-1})^{(i_k,j_k)}\).
Write \(\vect{x}=(c,s_1,s_2)^\T\),
\(\vect{\alpha}=(\alpha_1,\alpha_2)^\T\), and
\(\vect{\beta}=(\beta_1,\beta_2)^\T\).
For \(\ell=1,2,3\), let \(\widehat h_{k,\mathrm{C3},\ell}\)
denote the auxiliary function viewed as a function of its \(\ell\)th
block, with the other blocks fixed.
We generate a candidate by \emph{block coordinate ascent} using the
gradient-based block selection rule in
\subalgref{alg:branch-solver-h-complex-3rd}.
The first block uses one \emph{shifted power} update on \(\Sphere{2}\)
\cite{kolda2011shifted,li2019polar}, while the two phase blocks on
\(\Sphere{1}\) admit closed-form solutions.
The finite sequence of updates generates a candidate without requiring
a global solution of the first-block subproblem.

The uniform constants can be computed directly from the fixed input
third-order tensor \(\tens{A}\) in
\eqref{cost-fn-complex-l1-diag}. One admissible choice is
\begin{equation}\label{eq:complex3-computable-parameters}
C_{\mathrm{C3}}=\max\{1,8\|\tens{A}\|_F\},\ \ 
M_{\mathrm{C3}}=3C_{\mathrm{C3}},\ \ 
L_{\mathrm{C3}}=6C_{\mathrm{C3}},\ \ 
\gamma_{\mathrm{sh}}=7C_{\mathrm{C3}}.
\end{equation}
The displayed value of \(\gamma_{\mathrm{sh}}\) is a default; any fixed
\(\gamma_{\mathrm{sh}}>6C_{\mathrm{C3}}
=\max\{M_{\mathrm{C3}},L_{\mathrm{C3}}\}\) is admissible. 
Since \(\|\matr{P}_j\|_2=\|\matr{Q}_j\|_2=\sqrt2\) for \(j=1,2\),
unitary invariance of the Frobenius norm and Cauchy--Schwarz give
\[
\|\tens{C}_{\mathrm R}\|_F+\|\tens{C}_{\mathrm I}\|_F+
\|\tens{S}_{\mathrm R}\|_F+\|\tens{S}_{\mathrm I}\|_F
\leq8\|\tens{A}\|_F\leq C_{\mathrm{C3}}.
\]
For fixed phase blocks, write
\(\tens{H}=\alpha_1\tens{C}_{\mathrm R}
+\alpha_2\tens{C}_{\mathrm I}
+\beta_1\tens{S}_{\mathrm R}
+\beta_2\tens{S}_{\mathrm I}\).
Then \(\|\tens{H}\|_F\leq C_{\mathrm{C3}}\).
For a third-order tensor $\tens{B}$, write
$\tens{B}[\vect{u},\vect{v},\vect{w}]
\eqdef \tens{B}\contr{1}\vect{u}^{\T}
\contr{2}\vect{v}^{\T}\contr{3}\vect{w}^{\T}$.
Direct differentiation of
\(\widehat h_{k,\mathrm{C3},1}(\vect{x})
=\tens{H}[\vect{x},\vect{x},\vect{x}]\) gives
\(\|\nabla\widehat h_{k,\mathrm{C3},1}(\vect{x})\|
\leq M_{\mathrm{C3}}\) and
\(\|\nabla^2\widehat h_{k,\mathrm{C3},1}(\vect{x})\|_2
\leq L_{\mathrm{C3}}\) on the unit ball, without requiring
\(\tens{H}\) to be symmetric.
Thus the first-block gradient is \(L_{\mathrm{C3}}\)-Lipschitz there.
Both phase-coefficient vectors have norm at most
\(C_{\mathrm{C3}}\leq M_{\mathrm{C3}}\).
These bounds are uniform in \(k\) and the phase blocks, so
\eqref{eq:complex3-computable-parameters} ensures a nonzero
normalization denominator and ascent in every shifted power step.

Every call starts from the identity rotation
\(\vect{x}^{(0)}=\vect{e}_1\) and phases maximizing the auxiliary
function at that point. Thus the auxiliary and reduced objectives
agree initially, and auxiliary nondecrease guarantees nondecrease
of the reduced objective.

{
\floatname{algorithm}{Subalgorithm}
\renewcommand{\thealgorithm}{CG-C3}

\begin{algorithm}
\caption{Finite-step candidate generation for \eqref{eq:hk-d3-r1}}
\label{alg:branch-solver-h-complex-3rd}
\begin{algorithmic}[1]
\STATE \textbf{Input:} the auxiliary function
\(\widehat h_{k,\mathrm{C3}}\) in
\eqref{eq:hk-d3-r1-aux}, constants
$0<\delta_{\mathrm{blk}}<1/\sqrt3$ and
$\gamma_{\mathrm{sh}}>\max\{M_{\mathrm{C3}},L_{\mathrm{C3}}\}$,
with the computable constants and default shift in
\eqref{eq:complex3-computable-parameters}, and a fixed integer
$N_{\rm in}\geq1$ independent of the outer iteration \(k\).
\STATE \textbf{Output:} A rotation matrix
\(\matr{\Psi}(c^\ast,s_1^\ast,s_2^\ast)\).
\STATE Set $\vect{x}^{(0)}=\vect{e}_1$. Choose
$\vect{\alpha}^{(0)}$ and $\vect{\beta}^{(0)}$ to maximize their
respective linear phase functions at $\vect{e}_1$; for a zero
coefficient vector choose $(1,0)^\T$. Set
$\vect{z}^{(0)}=(\vect{x}^{(0)},\vect{\alpha}^{(0)},\vect{\beta}^{(0)})$.

\FOR{$\ell=1,\ldots,N_{\rm in}$}
\STATE Choose a block index \(p_\ell\in\{1,2,3\}\) such that
\begin{equation}\label{eq:bcd-block-selection-complex3}
\left\|
\grad \widehat h_{k,\mathrm{C3},p_\ell}
\right\|
\geq
\delta_{\mathrm{blk}}
\left\|
\grad \widehat h_{k,\mathrm{C3}}
\bigl(\vect{z}^{(\ell-1)}\bigr)
\right\|,
\end{equation}
where the gradients are evaluated at
\(\vect{z}^{(\ell-1)}
=
(\vect{x}^{(\ell-1)},\vect{\alpha}^{(\ell-1)},\vect{\beta}^{(\ell-1)})\).

\STATE \textbf{if} $p_\ell=1$, \textbf{then} compute one shifted power iteration for the first block:
\begin{equation}\label{eq:shifted-power-update-complex3}
\vect{x}^{(\ell)}
=
\frac{
\nabla \widehat h_{k,\mathrm{C3},1}
\bigl(
\vect{x}^{(\ell-1)}
\bigr)
+
\gamma_{\mathrm{sh}} \vect{x}^{(\ell-1)}
}{
\left\|
\nabla \widehat h_{k,\mathrm{C3},1}
\bigl(
\vect{x}^{(\ell-1)}
\bigr)
+
\gamma_{\mathrm{sh}} \vect{x}^{(\ell-1)}
\right\|
},
\end{equation}
and set
$\vect{\alpha}^{(\ell)}=\vect{\alpha}^{(\ell-1)}$ and $\vect{\beta}^{(\ell)}=\vect{\beta}^{(\ell-1)}.$

\STATE \textbf{if} $p_\ell=2$, \textbf{then} compute a closed-form global maximizer
\begin{equation}\label{eq:inner_update_3complex}
\vect{\alpha}^{(\ell)}
\in
\argmax_{\vect{\alpha}\in\Sphere{1}}
\widehat h_{k,\mathrm{C3}}
\bigl(
\vect{x}^{(\ell-1)},
\vect{\alpha},
\vect{\beta}^{(\ell-1)}
\bigr),
\end{equation}
where the phase is unchanged if its coefficient vector is zero;
and set $\vect{x}^{(\ell)}=\vect{x}^{(\ell-1)}$ and $\vect{\beta}^{(\ell)}=\vect{\beta}^{(\ell-1)}.$
\STATE \textbf{if} $p_\ell=3$, \textbf{then} compute a closed-form global maximizer
\[
\vect{\beta}^{(\ell)}
\in
\argmax_{\vect{\beta}\in\Sphere{1}}
\widehat h_{k,\mathrm{C3}}
\bigl(
\vect{x}^{(\ell-1)},
\vect{\alpha}^{(\ell-1)},
\vect{\beta}
\bigr),
\]
where the phase is unchanged if its coefficient vector is zero;
and set $\vect{x}^{(\ell)}=\vect{x}^{(\ell-1)}$ and $\vect{\alpha}^{(\ell)}=\vect{\alpha}^{(\ell-1)}.$
\ENDFOR
\STATE Set $\widehat{\vect{x}}=\vect{x}^{(N_{\rm in})}$.
If its first coordinate is negative, replace
$\widehat{\vect{x}}$ by $-\widehat{\vect{x}}$; write
$\widehat{\vect{x}}=(c^*,s_1^*,s_2^*)^\T$.
\STATE Return $\matr{\Psi}(c^*,s_1^*,s_2^*)$.
\end{algorithmic}
\end{algorithm}
}

The next estimate supplies the sufficient ascent required in
\cref{subsec:conver_gence_general}.

\begin{proposition}
\label{prop:uniform-ascent-complex3}
Let $h_{k,\mathrm{C3}}(c,s_1,s_2)$ be defined as in
\eqref{eq:hk-d3-r1}. Suppose
$g_{i_k}(\matr{U}_{k-1})g_{j_k}(\matr{U}_{k-1})\neq0$.
Then the candidate returned by
\subalgref{alg:branch-solver-h-complex-3rd} satisfies
\[
h_{k,\mathrm{C3}}(c^*,s_1^*,s_2^*)
-h_{k,\mathrm{C3}}(1,0,0)
\geq
\varepsilon_{\mathrm{C3}}\,
\dist\left(\vect{0},
\partial_{\Sphere{2}}h_{k,\mathrm{C3}}(1,0,0)\right)
\left\|\matr{\Psi}(c^*,s_1^*,s_2^*)-\matr{I}_2\right\|_F,
\]
where
\(\varepsilon_{\mathrm{C3}}\eqdef
(\gamma_{\mathrm{sh}}-L_{\mathrm{C3}})/
[2\sqrt{2N_{\rm in}}(M_{\mathrm{C3}}+\gamma_{\mathrm{sh}})]>0\)
is independent of \(k\).
\end{proposition}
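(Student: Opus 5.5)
The plan is to compare the reduced objective with the auxiliary function along the finite block-ascent path of \subalgref{alg:branch-solver-h-complex-3rd}. Three facts will be combined: the total auxiliary gain dominates the sum of squared first-block displacements, the first step is a shifted power step whose length is controlled by the initial gradient, and the final displacement from $\vect{e}_1$ is at most $\sqrt{N_{\rm in}}$ times the root-sum-of-squares of the step lengths.

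\textbf{Step 1 (identifying the subdifferential).} Since $g_{i_k}(\matr{U}_{k-1})g_{j_k}(\matr{U}_{k-1})\neq0$, both contractions are nonzero near the identity. There the maximizing phases in \eqref{eq:hk-d3-r1} are unique and depend smoothly on $\vect{x}$. By a Danskin/envelope argument, $h_{k,\mathrm{C3}}$ is smooth at $\vect{e}_1$, and
\[
\dist\bigl(\vect{0},\partial_{\Sphere{2}}h_{k,\mathrm{C3}}(1,0,0)\bigr)=\|\grad\widehat h_{k,\mathrm{C3},1}(\vect{z}^{(0)})\|\eqdef\|\vect{g}_0\|,
\]
because $\vect{\alpha}^{(0)},\vect{\beta}^{(0)}$ are exactly these maximizing phases. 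The same optimality gives zero phase-block Riemannian gradients at $\vect{z}^{(0)}$. Hence, if $\vect{g}_0\neq\vect{0}$, the selection rule \eqref{eq:bcd-block-selection-complex3} forces $p_1=1$. If $\vect{g}_0=\vect{0}$, the claim is trivial, since every step is nondecreasing.

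\textbf{Step 2 (per-step ascent).} For a shifted power step, write $\vect{y}=\nabla\widehat h_{k,\mathrm{C3},1}(\vect{x})+\gamma_{\mathrm{sh}}\vect{x}$ and let $\vect{x}^+=\vect{y}/\|\vect{y}\|$. The second-order bound $\|\nabla^2\widehat h_{k,\mathrm{C3},1}\|_2\leq L_{\mathrm{C3}}$ on the unit ball and the maximality of $\vect{x}^+$ for the linear form $\langle\vect{y},\cdot\rangle$ on $\Sphere{2}$ give the standard estimate
\[
\widehat h(\vect{x}^+)-\widehat h(\vect{x})\geq\tfrac{\gamma_{\mathrm{sh}}-L_{\mathrm{C3}}}{2}\|\vect{x}^+-\vect{x}\|^2 .
\]
It follows by expanding $\widehat h$ plus $\tfrac{\gamma_{\mathrm{sh}}}{2}\|\cdot\|^2$, which is convex-like because $\gamma_{\mathrm{sh}}>L_{\mathrm{C3}}$. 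Moreover, the tangential part of $\vect{x}^+$ at $\vect{x}$ equals $\grad\widehat h_{k,\mathrm{C3},1}(\vect{x})/\|\vect{y}\|$, and $\|\vect{y}\|\leq M_{\mathrm{C3}}+\gamma_{\mathrm{sh}}$. Hence the first step satisfies $s_1\eqdef\|\vect{x}^{(1)}-\vect{e}_1\|\geq\|\vect{g}_0\|/(M_{\mathrm{C3}}+\gamma_{\mathrm{sh}})$. Phase steps are exact maximizations, so they are nondecreasing and do not move $\vect{x}$.

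\textbf{Step 3 (summation).} Summing the per-step gains, with $S=\sum_\ell\|\vect{x}^{(\ell)}-\vect{x}^{(\ell-1)}\|^2$, the auxiliary gain $G$ satisfies $G\geq\tfrac{\gamma_{\mathrm{sh}}-L_{\mathrm{C3}}}{2}S$. Set $D=\|\vect{x}^{(N_{\rm in})}-\vect{e}_1\|$. Only $x$-steps contribute to $D$, and there are at most $N_{\rm in}$ of them, so Cauchy--Schwarz gives $\sqrt{S}\geq D/\sqrt{N_{\rm in}}$. Also $S\geq s_1^2$. Writing $S\geq\sqrt S\sqrt S\geq s_1D/\sqrt{N_{\rm in}}$, we get
\[
G\geq\frac{\gamma_{\mathrm{sh}}-L_{\mathrm{C3}}}{2\sqrt{N_{\rm in}}(M_{\mathrm{C3}}+\gamma_{\mathrm{sh}})}\,\|\vect{g}_0\|\,D .
\]
The reduced objective at the final $\vect{x}$ is at least the auxiliary value, and the two coincide at $\vect{z}^{(0)}$. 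So its gain is at least $G$.

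\textbf{Step 4 (the final sign flip).} The map $\vect{x}\mapsto-\vect{x}$ leaves $h_{k,\mathrm{C3}}$ unchanged, since the cubic form changes sign and the phases absorb it. When the first coordinate is negative, the flip also decreases $\|\vect{x}-\vect{e}_1\|$. A direct computation from \eqref{eq:2.10} gives $\|\matr{\Psi}(c,s_1,s_2)-\matr{I}_2\|_F=\sqrt2\,\|\vect{x}-\vect{e}_1\|$. Substituting $D\geq\|\matr{\Psi}-\matr{I}_2\|_F/\sqrt2$ yields exactly $\varepsilon_{\mathrm{C3}}$.

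The main obstacle is Step 1: justifying rigorously that the limiting Riemannian subdifferential of the max-type function reduces to the $\vect{x}$-gradient of the auxiliary function at the initial phases. This requires uniqueness and smoothness of the optimal phases, which in turn follows from the nonvanishing contractions. A secondary point is checking the shifted power ascent inequality with the non-symmetric $\tens{H}$, using only the Hessian bound on the ball.
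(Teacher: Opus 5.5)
Your proposal is correct and follows essentially the same argument as the paper. You identify the subdifferential distance with the first-block gradient at the uniquely maximizing initial phases, and you use the vanishing phase gradients to force the first update to be a shifted power step. You then combine the \((\gamma_{\mathrm{sh}}-L_{\mathrm{C3}})/2\) squared-step gains, the first-step length bound, the estimate \(D\leq\sqrt{N_{\rm in}S}\), the sign flip, and \(\|\matr{\Psi}-\matr{I}_2\|_F=\sqrt2\,\|\vect{x}-\vect{e}_1\|\). Two details differ slightly from the paper, and both are valid: you handle \(\vect{g}_0=\vect{0}\) by noting that the right-hand side vanishes, whereas the paper also shows the iterate stays at \(\vect{e}_1\); and you derive the shifted-power ascent from strong convexity of \(\widehat h+\tfrac{\gamma_{\mathrm{sh}}}{2}\|\cdot\|^2\), where the paper uses a linearization argument.
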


We first estimate the gain and step length of a shifted power update.

\begin{lemma}
\label{lem:complex3-shifted-power-ascent}
In \subalgref{alg:branch-solver-h-complex-3rd}, when $p_{\ell}=1$, the iterate $\vect{x}^{(\ell)}$ defined by \eqref{eq:shifted-power-update-complex3} satisfies
\begin{align*}
\widehat h_{k,\mathrm{C3},1}(\vect{x}^{(\ell)})
-\widehat h_{k,\mathrm{C3},1}(\vect{x}^{(\ell-1)})
&\geq
\frac{\gamma_{\mathrm{sh}}-L_{\mathrm{C3}}}{2}
\|\vect{x}^{(\ell)}-\vect{x}^{(\ell-1)}\|^2,\\
\|\vect{x}^{(\ell)}-\vect{x}^{(\ell-1)}\|
&\geq\frac{1}{M_{\mathrm{C3}}+\gamma_{\mathrm{sh}}}
\|\grad \widehat h_{k,\mathrm{C3},1}
\bigl(
\vect{x}^{(\ell-1)}
\bigr)\|.
\end{align*}
\end{lemma}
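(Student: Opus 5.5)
The plan is to treat the shifted power step as a majorization--minorization (linearization) step for a function whose gradient is $L_{\mathrm{C3}}$-Lipschitz on the unit ball, with $\gamma_{\mathrm{sh}}>L_{\mathrm{C3}}$ making the shifted function convex. Write $\vect{x}=\vect{x}^{(\ell-1)}$, $\vect{x}^+=\vect{x}^{(\ell)}$, $\phi=\widehat h_{k,\mathrm{C3},1}$, $\vect{g}=\nabla\phi(\vect{x})$, and $\vect{y}=\vect{g}+\gamma_{\mathrm{sh}}\vect{x}$. By the triangle inequality, $\|\vect{y}\|\ge\gamma_{\mathrm{sh}}-M_{\mathrm{C3}}>0$, so $\vect{x}^+=\vect{y}/\|\vect{y}\|$ is well defined. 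Moreover, $\vect{x}^+$ maximizes the linear function $\vect{z}\mapsto\langle\vect{y},\vect{z}\rangle$ over the unit sphere.

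\emph{Ascent inequality.} Since the Hessian of $\phi$ has spectral norm at most $L_{\mathrm{C3}}$ on the unit ball, and both $\vect{x}$ and $\vect{x}^+$ lie in that convex ball, Taylor's theorem gives
\[
\phi(\vect{x}^+)\ge\phi(\vect{x})+\langle\vect{g},\vect{x}^+-\vect{x}\rangle-\tfrac{L_{\mathrm{C3}}}{2}\|\vect{x}^+-\vect{x}\|^2 .
\]
On the sphere we have the identity $\langle\vect{x},\vect{x}^+-\vect{x}\rangle=-\tfrac12\|\vect{x}^+-\vect{x}\|^2$. Substituting $\vect{g}=\vect{y}-\gamma_{\mathrm{sh}}\vect{x}$ therefore yields
\[
\langle\vect{g},\vect{x}^+-\vect{x}\rangle=\langle\vect{y},\vect{x}^+-\vect{x}\rangle+\tfrac{\gamma_{\mathrm{sh}}}{2}\|\vect{x}^+-\vect{x}\|^2 .
\]
The term $\langle\vect{y},\vect{x}^+-\vect{x}\rangle=\|\vect{y}\|-\langle\vect{y},\vect{x}\rangle$ is nonnegative by Cauchy--Schwarz. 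Combining these gives the first inequality with constant $(\gamma_{\mathrm{sh}}-L_{\mathrm{C3}})/2$.

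\emph{Step-length inequality.} The Riemannian gradient on $\Sphere{2}$ is $\grad\phi(\vect{x})=(\matr{I}-\vect{x}\vect{x}^\T)\vect{g}$. Since $\vect{x}$ is orthogonal to its range, this also equals $(\matr{I}-\vect{x}\vect{x}^\T)\vect{y}=\|\vect{y}\|(\matr{I}-\vect{x}\vect{x}^\T)\vect{x}^+$. Because $\vect{x}^+-\vect{x}$ differs from $(\matr{I}-\vect{x}\vect{x}^\T)\vect{x}^+$ only by a component along $\vect{x}$, we get
\[
\|(\matr{I}-\vect{x}\vect{x}^\T)\vect{x}^+\|=\|(\matr{I}-\vect{x}\vect{x}^\T)(\vect{x}^+-\vect{x})\|\le\|\vect{x}^+-\vect{x}\| .
\]
Hence $\|\grad\phi(\vect{x})\|\le\|\vect{y}\|\,\|\vect{x}^+-\vect{x}\|$. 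Finally, $\|\vect{y}\|\le\|\vect{g}\|+\gamma_{\mathrm{sh}}\le M_{\mathrm{C3}}+\gamma_{\mathrm{sh}}$ by the stated gradient bound, which gives the second inequality.

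\emph{Main obstacle.} The only delicate point is justifying the Lipschitz-gradient quadratic lower bound. It requires the segment between $\vect{x}$ and $\vect{x}^+$ to stay in the unit ball, where the bound $\|\nabla^2\phi\|_2\le L_{\mathrm{C3}}$ was established. This holds by convexity of the ball, and uniformity holds because $\|\tens{H}\|_F\le C_{\mathrm{C3}}$ for all phase blocks. Everything else is elementary sphere geometry.
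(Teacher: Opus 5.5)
Your proposal is correct and follows essentially the same argument as the paper. Maximality of $\vect{x}^{(\ell)}$ for the linear functional $\langle\nabla\widehat h_{k,\mathrm{C3},1}(\vect{x}^{(\ell-1)})+\gamma_{\mathrm{sh}}\vect{x}^{(\ell-1)},\cdot\rangle$ on the sphere, combined with the $L_{\mathrm{C3}}$-Lipschitz quadratic lower bound along the segment in the unit ball, gives the ascent estimate; projecting the unnormalized update onto the tangent space at $\vect{x}^{(\ell-1)}$, together with the bound $M_{\mathrm{C3}}+\gamma_{\mathrm{sh}}$ on its norm, gives the step-length bound.
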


\begin{proof}
Keep the phase blocks fixed and abbreviate
$H=\widehat h_{k,\mathrm{C3},1}$,
$\vect{x}=\vect{x}^{(\ell-1)}$,
$\vect{y}=\vect{x}^{(\ell)}$, and
$\vect{w}=\nabla H(\vect{x})+\gamma_{\mathrm{sh}}\vect{x}$.
The choice $\gamma_{\mathrm{sh}}>M_{\mathrm{C3}}$ gives
$\|\vect{w}\|\geq\gamma_{\mathrm{sh}}-M_{\mathrm{C3}}>0$, so the update
$\vect{y}=\vect{w}/\|\vect{w}\|$ is well defined.
It maximizes $\langle\vect{w},\cdot\rangle$ over the sphere.
Comparison with $\vect{x}$ yields
$$\langle\nabla H(\vect{x}),\vect{y}-\vect{x}\rangle
\geq\gamma_{\mathrm{sh}}(1-\langle\vect{x},\vect{y}\rangle)
=\frac{\gamma_{\mathrm{sh}}}{2}\|\vect{y}-\vect{x}\|^2.$$
The segment joining $\vect{x}$ and $\vect{y}$ is in the unit ball;
the Lipschitz-gradient bound there gives
$H(\vect{y})-H(\vect{x})\geq
\langle\nabla H(\vect{x}),\vect{y}-\vect{x}\rangle
-\frac{L_{\mathrm{C3}}}2\|\vect{y}-\vect{x}\|^2$.
Combining these inequalities proves the first estimate. 
For the second, let
$\matr{P}_{\vect{x}}=\matr{I}_3-\vect{x}\vect{x}^\T$.
Projecting
$\|\vect{w}\|\vect{y}
=\nabla H(\vect{x})+\gamma_{\mathrm{sh}}\vect{x}$
onto the tangent space at $\vect{x}$ gives
$\grad H(\vect{x})
=\|\vect{w}\|\matr{P}_{\vect{x}}(\vect{y}-\vect{x})$.
Thus
$\|\grad H(\vect{x})\|
\leq(M_{\mathrm{C3}}+\gamma_{\mathrm{sh}})
\|\vect{y}-\vect{x}\|$,
as required.
\end{proof}

In \subalgref{alg:branch-solver-h-complex-3rd}, when $p_{\ell}=2$, define
$\vect{a}=(\tens{C}_{\mathrm R}[\vect{x},\vect{x},\vect{x}],
\tens{C}_{\mathrm I}[\vect{x},\vect{x},\vect{x}])^\T
\in\RR^2$, where $\vect{x}=\vect{x}^{(\ell-1)}$.

\begin{lemma}
\label{lem:complex3-phase-ascent}
In \subalgref{alg:branch-solver-h-complex-3rd}, when $p_{\ell}=2$, the iterate $\vect{\alpha}^{(\ell)}$ defined by \eqref{eq:inner_update_3complex} satisfies
\begin{align}
\widehat h_{k,\mathrm{C3},2}(\vect{\alpha}^{(\ell)})
-\widehat h_{k,\mathrm{C3},2}(\vect{\alpha}^{(\ell-1)})
&=
\frac{\|\vect{a}\|}{2}
\|\vect{\alpha}^{(\ell)}-\vect{\alpha}^{(\ell-1)}\|^2,
\label{eq:complex3-phase-gain-corrected}\\
\|\vect{\alpha}^{(\ell)}-\vect{\alpha}^{(\ell-1)}\|
&\geq\frac{1}{M_{\mathrm{C3}}+\gamma_{\mathrm{sh}}}
\|\grad \widehat h_{k,\mathrm{C3},2}
\bigl(
\vect{\alpha}^{(\ell-1)}
\bigr)\|.\notag
\end{align}
\end{lemma}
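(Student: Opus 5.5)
With $\vect{x}=\vect{x}^{(\ell-1)}$ and the block $\vect{\beta}=\vect{\beta}^{(\ell-1)}$ held fixed, the second-block function is affine in $\vect{\alpha}$:
\[
\widehat h_{k,\mathrm{C3},2}(\vect{\alpha})=\langle\vect{a},\vect{\alpha}\rangle+c_0,\qquad c_0=\bigl(\beta_1\tens{S}_{\mathrm R}+\beta_2\tens{S}_{\mathrm I}\bigr)[\vect{x},\vect{x},\vect{x}].
\]
The plan is to exploit this linear structure on $\Sphere{1}$. Write $\vect{\alpha}^{(\ell-1)}$ as $\vect{\alpha}_0$ and $\vect{\alpha}^{(\ell)}$ as $\vect{\alpha}_+$. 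If $\vect{a}=\vect{0}$, the phase is unchanged, and both sides of both estimates vanish. Indeed, the Riemannian gradient $\matr{P}_{\vect{\alpha}_0}\vect{a}$ is then zero.

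Now suppose $\vect{a}\neq\vect{0}$. The closed-form maximizer is $\vect{\alpha}_+=\vect{a}/\|\vect{a}\|$. The gain is therefore
\[
\langle\vect{a},\vect{\alpha}_+-\vect{\alpha}_0\rangle=\|\vect{a}\|\bigl(1-\langle\vect{\alpha}_+,\vect{\alpha}_0\rangle\bigr).
\]
Since both vectors are unit vectors, $1-\langle\vect{\alpha}_+,\vect{\alpha}_0\rangle=\tfrac12\|\vect{\alpha}_+-\vect{\alpha}_0\|^2$. This gives the equality \eqref{eq:complex3-phase-gain-corrected}.

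For the step-length bound, I would mimic the second half of the proof of \cref{lem:complex3-shifted-power-ascent}. Set $\matr{P}_{\vect{\alpha}_0}=\matr{I}_2-\vect{\alpha}_0\vect{\alpha}_0^\T$. The Riemannian gradient at $\vect{\alpha}_0$ is $\matr{P}_{\vect{\alpha}_0}\vect{a}=\|\vect{a}\|\matr{P}_{\vect{\alpha}_0}\vect{\alpha}_+$. Because $\matr{P}_{\vect{\alpha}_0}\vect{\alpha}_0=\vect{0}$, this equals $\|\vect{a}\|\matr{P}_{\vect{\alpha}_0}(\vect{\alpha}_+-\vect{\alpha}_0)$. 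Hence
\[
\|\grad\widehat h_{k,\mathrm{C3},2}(\vect{\alpha}_0)\|\leq\|\vect{a}\|\,\|\vect{\alpha}_+-\vect{\alpha}_0\|.
\]

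It remains to bound $\|\vect{a}\|$ uniformly. By Cauchy--Schwarz and $\|\vect{x}\|=1$,
\[
\|\vect{a}\|\leq\|\tens{C}_{\mathrm R}\|_F+\|\tens{C}_{\mathrm I}\|_F\leq C_{\mathrm{C3}}\leq M_{\mathrm{C3}}<M_{\mathrm{C3}}+\gamma_{\mathrm{sh}}.
\]
This is the ``phase-coefficient vectors have norm at most $C_{\mathrm{C3}}$'' remark stated before \subalgref{alg:branch-solver-h-complex-3rd}. Dividing then yields the second inequality.

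No step is a real obstacle. The only point needing care is the degenerate case $\vect{a}=\vect{0}$ together with the convention of leaving the phase unchanged, which I handle first.
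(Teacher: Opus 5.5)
Your proposal is correct and follows the paper's proof essentially step for step. You treat $\vect{a}=\vect{0}$ separately, use the closed-form maximizer $\vect{a}/\|\vect{a}\|$ and linearity to get the exact gain identity, bound the spherical gradient by $\|\vect{a}\|\,\|\vect{\alpha}^{(\ell)}-\vect{\alpha}^{(\ell-1)}\|$, and finish with $\|\vect{a}\|\leq C_{\mathrm{C3}}\leq M_{\mathrm{C3}}$; you merely spell out the projection identity and the Cauchy--Schwarz bound on $\|\vect{a}\|$ that the paper leaves implicit.
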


\begin{proof}
If \(\vect{a}=0\), the phase is unchanged and both assertions are
immediate. Otherwise the update is
\(\vect{\alpha}^{(\ell)}=\vect{a}/\|\vect{a}\|\), and linearity of the
phase block gives \eqref{eq:complex3-phase-gain-corrected}. Moreover,
the spherical gradient at \(\vect{\alpha}^{(\ell-1)}\) has norm at most
\(\|\vect{a}\|\|\vect{\alpha}^{(\ell)}-
\vect{\alpha}^{(\ell-1)}\|\). Since
\(\|\vect{a}\|\leq M_{\mathrm{C3}}\),
the second estimate follows. The same argument applies to the
\(\vect{\beta}\)-block.
\end{proof}

\begin{proof}[Proof of \cref{prop:uniform-ascent-complex3}]
At each inner iterate, the squared product-manifold gradient norm is
the sum of the squared block-gradient norms. Hence a block satisfying
\eqref{eq:bcd-block-selection-complex3} exists because
\(\delta_{\mathrm{blk}}<1/\sqrt3\). The initialization matches the
two maximizing phases at \(\vect{e}_1\), so
\(\widehat h_{k,\mathrm{C3}}(\vect{z}^{(0)})
=h_{k,\mathrm{C3}}(1,0,0)\).
\Cref{lem:complex3-shifted-power-ascent,lem:complex3-phase-ascent}
show that every inner step is nondecreasing for the auxiliary
function. Its final value is no larger than
\(h_{k,\mathrm{C3}}\) at the returned first block, and the final sign
normalization does not change this reduced objective.

The two selected contractions are nonzero at \(\vect{e}_1\), so the
initial maximizing phases are unique.
Differentiating \(|z|\) at \(z\ne0\) therefore shows that the
subdifferential distance of \(h_{k,\mathrm{C3}}\) at the identity
equals the norm of the first-block gradient of
\(\widehat h_{k,\mathrm{C3}}\) at \(\vect{z}^{(0)}\); denote their
common value by \(g_k\).
If \(g_k=0\), the phase gradients also vanish at
\(\vect{z}^{(0)}\), while the first-block ambient gradient is parallel
to \(\vect{e}_1\). Since
\(\gamma_{\mathrm{sh}}>M_{\mathrm{C3}}\), a first-block update then
leaves \(\vect{e}_1\) unchanged; the maximizing phase updates do so as
well. Thus, if \(g_k=0\), or if no first-block update is performed, the
returned first block is \(\vect{e}_1\), and the assertion follows from
monotonicity.
Otherwise, both phase-block gradients vanish at
\(\vect{z}^{(0)}\), so the first performed update is a first-block
update. Put
\(a=(\gamma_{\mathrm{sh}}-L_{\mathrm{C3}})/2>0\), and let \(S\)
be the sum of the squared lengths of all first-block steps. By
\cref{lem:complex3-shifted-power-ascent}, the total gain is at least
\(aS\), and the first step has length at least
\(g_k/(M_{\mathrm{C3}}+\gamma_{\mathrm{sh}})\).
If \(D\) is the distance from the returned sphere parameter to
\(\vect{e}_1\), sign normalization can only decrease this distance,
while the triangle and Cauchy--Schwarz inequalities give
\(D\leq\sqrt{N_{\rm in}S}\). Since \(S\) is also at least the square
of the first-step length, the gain is at least
\(a\,g_kD/
[\sqrt{N_{\rm in}}(M_{\mathrm{C3}}+\gamma_{\mathrm{sh}})]\).
Finally,
\(\|\matr{\Psi}(c^*,s_1^*,s_2^*)-\matr{I}_2\|_F=\sqrt2D\),
which yields the stated \(\varepsilon_{\mathrm{C3}}\).
\end{proof}

\section{Numerical experiments}\label{sec:numerical}

\subsection{Practical implementation of Jacobi methods}
\label{subsec:batch-jacobi-subg}
We evaluate the proposed methods on real fourth-order ICA and \(\ell_1\)-based tensor
approximate diagonalization problems. The Jacobi-SubG variants use
\subalgref{alg:branch-solver-h}.
We consider four implementations. The threshold-based variant
Jacobi-SubG(\(\delta\)) scans the
unordered pairs cyclically and accepts the first pair satisfying
\eqref{inequality-gra-based-2}; Jacobi-SubG-Max selects the pair with the
largest pairwise subgradient norm. Jacobi-Cyclic visits every pair once per
sweep and globally solves each unpenalized local subproblem. Jacobi-CyclicB
divides a sweep into batches of \(q=\lfloor n/2\rfloor\) disjoint pairs,
computes their rotations from the same transformed tensor, and applies them
sequentially. Since disjoint rotations commute and do not change one
another's principal pair subtensors, Jacobi-Cyclic and Jacobi-CyclicB agree
at batch boundaries in exact arithmetic when the same schedule and
deterministic local solver are used.
Since the cyclic rules do not enforce \eqref{inequality-gra-based-2}, the
convergence theorem for Jacobi-SubG does not directly apply to them. 

\Cref{fig:jacobi-batch-kernel-timing} compares the two cyclic
implementations on 30 paired real fourth-order ICA problems for each
\(n\in\{4,8,12,16\}\). Batching reduces the mean time per sweep at every
tested dimension through vectorized coefficient construction and shared
postprocessing. These sequential timings do not measure parallel speedup.

\begin{figure}[htbp]
\centering
\includegraphics[width=\linewidth]{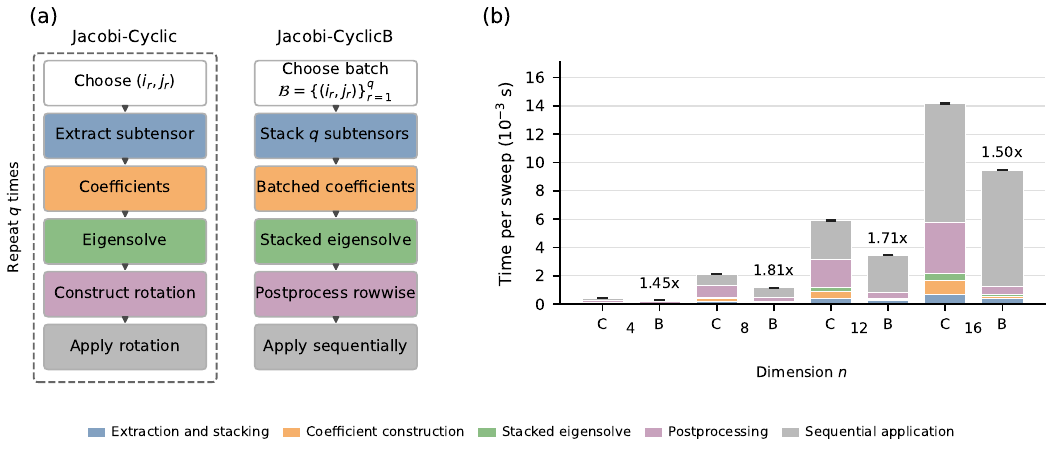}
\caption{(a) One computational stage of Jacobi-Cyclic and Jacobi-CyclicB.
The Jacobi-Cyclic loop selects the next pair and applies its rotation.
(b) Component times per sweep (C: Jacobi-Cyclic; B: Jacobi-CyclicB).
The labels give the mean ratio of the Jacobi-Cyclic time to the
Jacobi-CyclicB time over paired trials.}
\label{fig:jacobi-batch-kernel-timing}
\end{figure}

\subsection{Independent component analysis}
\label{subsec:exp1-ica}

We compare the methods on eight independent unit-variance sources: four
\(\operatorname{Laplace}(0,1/\sqrt{2})\) and four
\(\operatorname{Uniform}[-\sqrt{3},\sqrt{3}]\).
The observations are \(\matr{Y}=\matr{A}\matr{S}+\sigma\matr{G}\), where
\(\matr{A}\in\ON{8}\) is Haar-distributed and
\(G_{ij}\overset{\mathrm{iid}}{\sim}\mathcal{N}(0,1)\).
We center and whiten the observations using their sample covariance.

We vary \(\sigma\in\{0.1,0.2,0.3,0.4,0.5\}\) at \(T=3200\),
and \(T\in\{1600,3200,6400,12800,25600\}\) at \(\sigma=0.5\).
Each setting uses 100 paired trials. Within each trial, all methods share
the observations and independently generated Haar-distributed initial directions.

We compare Jacobi-SubG(\(\delta\)), with
\(\delta\in\{10^{-4},1/\sqrt{n(n-1)}\}\), Jacobi-SubG-Max, and Jacobi-CyclicB,
as in \cref{subsec:batch-jacobi-subg}.
The baselines are Jacobi-CoM2 \cite{comon1994independent,LUC2017globally},
FastICA \cite{hyvarinen1997fast,hyvarinen1999fast}, and
RobustICA \cite{zarzoso2009robust}.
The Jacobi methods use the empirical fourth-order cumulant tensor.
The SubG variants use \subalgref{alg:branch-solver-h} with
\(\tau=10^{-3}\) and \(\upsilon=10^{-8}\).
The Jacobi methods use a relative-update tolerance of \(10^{-7}\),
tested after each sweep or block of \(n(n-1)/2\) pair updates;
the ICA baselines use their componentwise tests at the same tolerance.

We report the Amari error of \(\matr{M}=\matr{B}\matr{A}\), where
\(\matr{B}\) includes the estimated separation and sample whitening:
\[
\operatorname{Amari}(\matr{M})
=\frac{1}{2n(n-1)}
\left[
\sum_i\left(\frac{\sum_j|M_{ij}|}{\max_j|M_{ij}|}-1\right)
+\sum_j\left(\frac{\sum_i|M_{ij}|}{\max_i|M_{ij}|}-1\right)
\right].
\]
All methods satisfy their stopping tests.

\Cref{fig:exp1-ica} shows nearly identical mean errors for the SubG variants
and Jacobi-CyclicB throughout the noise sweep. These four variants have lower
mean Amari error than Jacobi-CoM2 and the ICA baselines at every tested
setting. Increasing the sample size reduces the mean Amari error for all methods.

For these implementations, FastICA has the shortest mean total time.
Jacobi-CyclicB has lower mean total time than the SubG variants throughout both sweeps.

\begin{figure}[htbp]
\centering
\includegraphics[width=\linewidth]{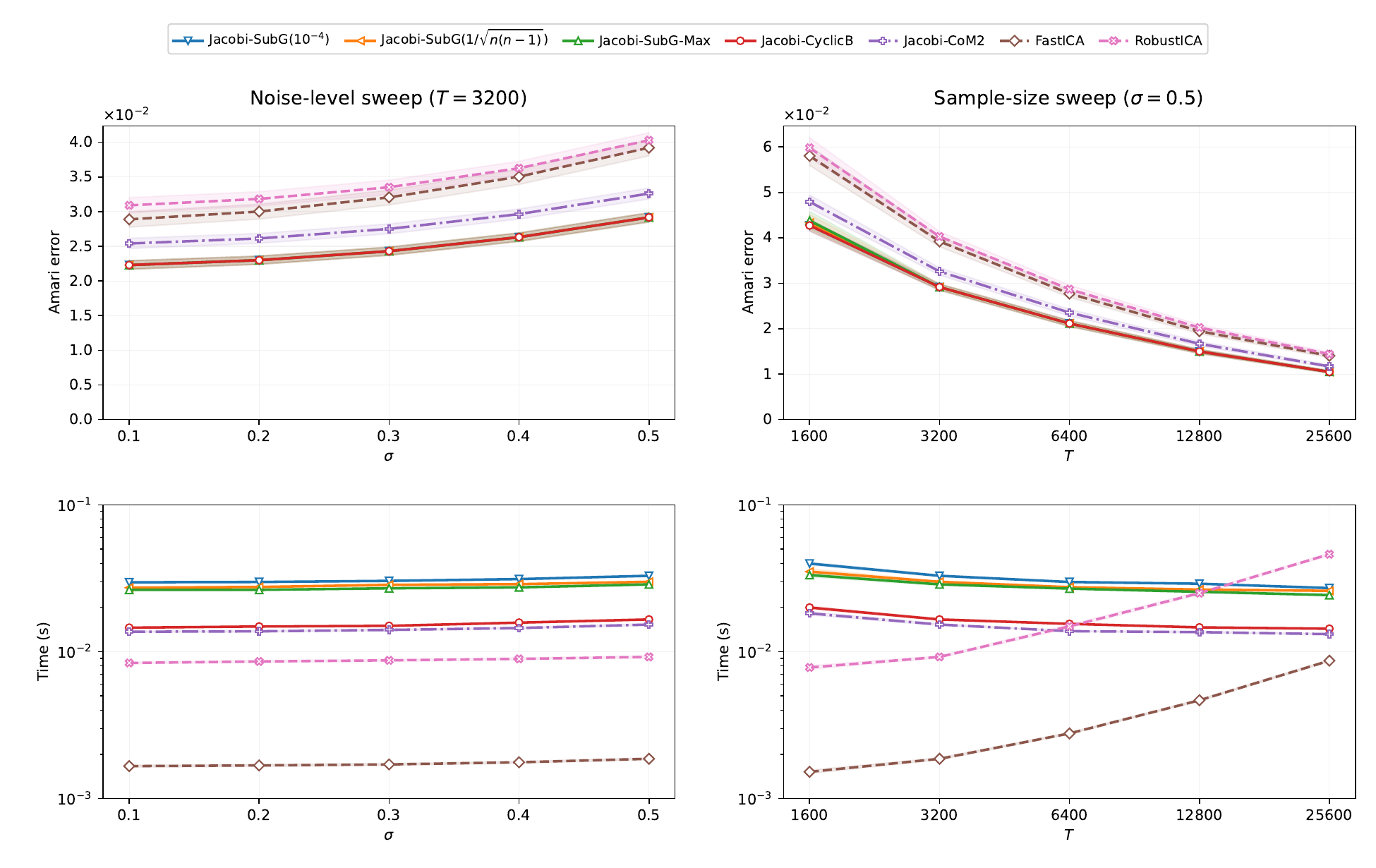}
\caption{Comparison of mean Amari error and mean total computation time in the ICA experiment. Shaded regions show pointwise 95\% bootstrap
confidence intervals over 100 paired trials per setting.}
\label{fig:exp1-ica}
\end{figure}

We report the branch-selection frequencies of the SubG variants and how often
the returned rotation globally maximizes the unpenalized local objective \(h_{k,\mathrm{R4}}\).
Let \(h^\star\) denote its global maximum
over \(c^2+s^2=1\), \(c\geq0\).
Exactness is assessed using a relative tolerance of \(10^{-10}\),
scaled by the larger of \(|h^\star|\) and
\(\|\tens{W}(\matr{U}_{k-1})^{(i_k,j_k)}\|_F\).
As shown in \cref{tab:exp1-branch}, the linear and quadratic-eigenvector
branches of \subalgref{alg:branch-solver-h} are selected in roughly equal proportions.
Nearly all returned rotations globally maximize \(h_{k,\mathrm{R4}}\)
within the stated tolerance. The penalized fallback is never selected.
No angle refinement is triggered in these experiments.

\begin{table}[htbp]
\centering
\caption{Branch-selection and final exact-selection percentages, averaged over 100 trials per setting. Exactness is evaluated after candidate refinement.}
\label{tab:exp1-branch}
\scriptsize
\setlength{\tabcolsep}{2.1pt}
\renewcommand{\arraystretch}{1.15}
\begin{tabular}{@{}lrrrrrrrrrrrr@{}}
\toprule
& \multicolumn{4}{c}{Jacobi-SubG-Max}
& \multicolumn{4}{c}{Jacobi-SubG(\(10^{-4}\))}
& \multicolumn{4}{c}{Jacobi-SubG(\(1/\sqrt{n(n-1)}\))} \\
\cmidrule(lr){2-5}\cmidrule(lr){6-9}\cmidrule(lr){10-13}
Setting & Linear & \shortstack{Quadratic\\eigen} & \shortstack{Penalized\\fallback} & Exact & Linear & \shortstack{Quadratic\\eigen} & \shortstack{Penalized\\fallback} & Exact & Linear & \shortstack{Quadratic\\eigen} & \shortstack{Penalized\\fallback} & Exact \\
\midrule
\multicolumn{13}{c}{Sample-size sweep: \(\sigma=0.5\)} \\
\midrule
\(T=1600\) & 56.1 & 43.9 & 0.0 & 99.1 & 56.5 & 43.5 & 0.0 & 99.0 & 57.0 & 43.0 & 0.0 & 99.1 \\
\(T=3200\) & 56.1 & 43.9 & 0.0 & 99.2 & 56.5 & 43.5 & 0.0 & 98.9 & 56.6 & 43.4 & 0.0 & 99.1 \\
\(T=6400\) & 55.7 & 44.3 & 0.0 & 99.2 & 56.3 & 43.7 & 0.0 & 98.9 & 56.1 & 43.9 & 0.0 & 99.1 \\
\(T=12800\) & 56.3 & 43.7 & 0.0 & 99.1 & 56.2 & 43.8 & 0.0 & 98.7 & 56.5 & 43.5 & 0.0 & 99.0 \\
\(T=25600\) & 55.4 & 44.6 & 0.0 & 99.1 & 56.2 & 43.8 & 0.0 & 98.6 & 56.9 & 43.1 & 0.0 & 99.0 \\
\midrule
\multicolumn{13}{c}{Noise sweep: \(T=3200\)} \\
\midrule
\(\sigma=0.1\) & 57.1 & 42.9 & 0.0 & 99.2 & 56.3 & 43.7 & 0.0 & 98.9 & 56.9 & 43.1 & 0.0 & 99.0 \\
\(\sigma=0.2\) & 56.8 & 43.2 & 0.0 & 99.2 & 56.4 & 43.6 & 0.0 & 98.9 & 56.5 & 43.5 & 0.0 & 99.0 \\
\(\sigma=0.3\) & 55.1 & 44.9 & 0.0 & 99.2 & 56.4 & 43.6 & 0.0 & 98.9 & 56.5 & 43.5 & 0.0 & 99.1 \\
\(\sigma=0.4\) & 56.0 & 44.0 & 0.0 & 99.1 & 56.4 & 43.6 & 0.0 & 98.9 & 56.6 & 43.4 & 0.0 & 99.1 \\
\(\sigma=0.5\) & 56.1 & 43.9 & 0.0 & 99.2 & 56.5 & 43.5 & 0.0 & 98.9 & 56.6 & 43.4 & 0.0 & 99.1 \\
\bottomrule
\end{tabular}
\end{table}

\subsection{\texorpdfstring{\(\ell_1\)}{l1}-based tensor approximate diagonalization}
\label{subsec:exp2-tensor}

We construct symmetric fourth-order cores with sparse negative fourth-order
cross-cumulants.
Each coordinate pair \(i<j\) is included independently in a graph
\(\mathscr E\) with probability \(\rho\). Draw independent
\(c_i\sim\operatorname{Uniform}(0.1,0.3)\) and
\(w_{ij}\sim\operatorname{Uniform}(0.5,1.5)\), with \(w_{ji}=w_{ij}\), and set
\(\tenselem{D}_{iiii}=c_i+\kappa\sum_{j:\{i,j\}\in\mathscr E}w_{ij}\)
and \(\tenselem{D}_{iijj}=-w_{ij}\) on graph edges, including all
symmetric permutations; other entries are zero. Here \(\kappa=2\mathbb E[R^4]-3\), where \(R\) is a randomly signed
\(\operatorname{Uniform}(0.1,1)\) amplitude standardized to unit second moment.

For an independent Haar-distributed \(\matr{Q}\in\ON{n}\), define
\(\tens{A}_0=(\tens{D}\contr{1}\matr{Q}\contr{2}\matr{Q}\contr{3}\matr{Q}\contr{4}\matr{Q})/\|\tens{D}\|_F\).
Let \(\tens{E}=\operatorname{sym}_4(\tens{Z})/\|\operatorname{sym}_4(\tens{Z})\|_F\),
where \(\tens{Z}\) has independent standard Gaussian entries and
\(\operatorname{sym}_4\) averages over all mode permutations. The observed
tensor is \(\tens{A}=(\tens{A}_0+\sigma\tens{E})/\|\tens{A}_0+\sigma\tens{E}\|_F\);
\(\sigma\) is the relative Frobenius noise level before final normalization.
In each trial, all methods share the tensor and a Haar-distributed initial
matrix independent of it.

The main setting is \(n=8,\rho=0.2,\sigma=0.3\).
We vary \(\sigma\in\{0.1,\allowbreak0.2,\allowbreak0.3,\allowbreak0.4\}\)
at \(n=8,\rho=0.2\), and
\(n\in\{4,\allowbreak8,\allowbreak12,\allowbreak16,\allowbreak20,\allowbreak24\}\)
at \(\sigma=0.3\) with \(\rho_n=1.4/(n-1)\).
The dimension sweep thus fixes the expected average degree at \(1.4\).
Each setting uses twenty paired evaluation trials. The main-setting trials
are reused in both sweeps. Across noise levels, each trial uses the same core,
noise direction and initial matrix.

We compare the same four Jacobi variants as in the preceding ICA experiment,
using the same thresholds and subproblem parameters, with RSG
\cite{li2021weakly} and \(\ell_1\)-PAM \cite{li2026ell}.
RSG uses polar retraction and \(\alpha_k=c/k^p\), \(k\geq1\), with
\(c\in\{0.01,\allowbreak0.05,\allowbreak0.1,\allowbreak0.5,\allowbreak1,\allowbreak5,\allowbreak10,\allowbreak20\}\)
and \(p\in\{0.5,1\}\). \(\ell_1\)-PAM uses the Hessian-based rule of
\cite{li2026ell}, with
\(\alpha_0\in\{10^{-4},\allowbreak10^{-3},\allowbreak10^{-2},\allowbreak10^{-1},\allowbreak1,\allowbreak5,\allowbreak9,\allowbreak13\}\).
We select each baseline's parameters once on twenty independent calibration
instances at the main setting, maximizing the mean final normalized
objective among candidates that converge on all calibration instances.
We use the selected values \(c=1,p=0.5\) for RSG and
\(\alpha_0=10^{-4}\) for \(\ell_1\)-PAM at every setting in both sweeps.

We report mean final normalized objective
\(f_{\mathrm{norm}}(\matr{U})=f(\matr{U})/(\sqrt n\|\tens{A}\|_F)\)
and median solver time.
The common stopping condition is
\(\|\matr{G}_{\mathrm R}(\matr{U})\|_F/\|\tens{A}\|_F\leq10^{-4}\),
using the real specialization of \eqref{eq:selected-riemannian-subgradient}.
At a nonsmooth point, this selected subgradient norm need not equal the
minimum norm of a Riemannian subgradient. Jacobi-CyclicB allows at most 2,000 sweeps; the Jacobi-SubG variants
allow 2,000 blocks of \(n(n-1)/2\) pair updates. RSG and \(\ell_1\)-PAM allow
50,000 and 5,000 iterations, respectively.

All evaluation trials of the Jacobi variants and \(\ell_1\)-PAM satisfy the
stopping criterion. RSG also satisfies it at the main setting, but does not
reach the threshold in some trials at lower noise levels and larger dimensions.

\Cref{fig:exp2-tensor} shows that, at the main setting, all four Jacobi variants
attain higher mean final objectives than \(\ell_1\)-PAM and RSG, with
Jacobi-CyclicB attaining the highest mean. At larger dimensions,
Jacobi-SubG-Max and Jacobi-SubG(\(1/\sqrt{n(n-1)}\)) have lower median
solver times than Jacobi-SubG(\(10^{-4}\)).

\begin{figure}[htbp]
\centering
\includegraphics[width=\linewidth]{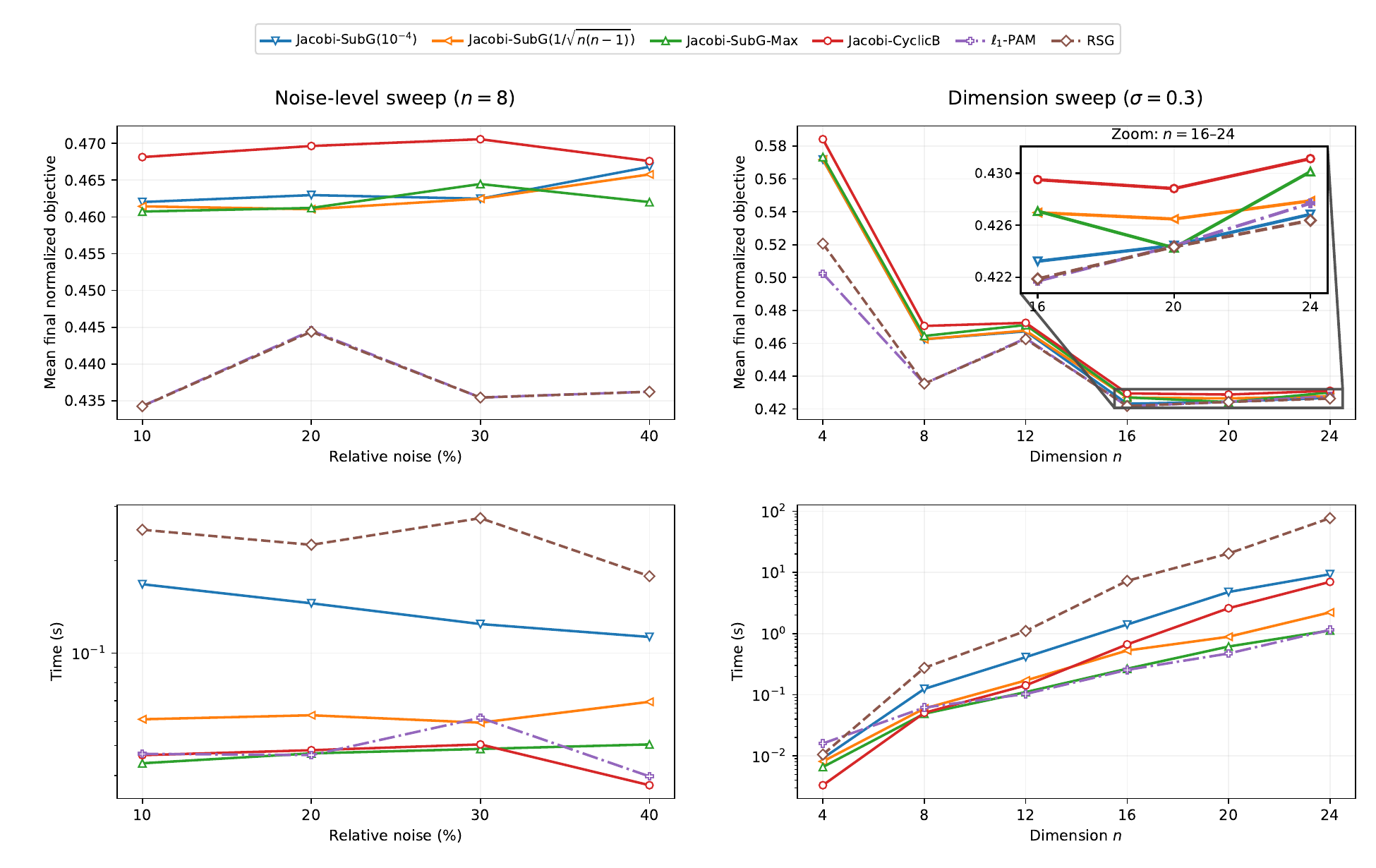}
\caption{Comparison of mean final normalized objectives and median solver times for tensor approximate diagonalization over twenty paired trials per setting.}
\label{fig:exp2-tensor}
\end{figure}

\section{Conclusion}\label{sec:conclusion}

Motivated by the widespread use of cumulant tensors in ICA, we develop
the Jacobi-SubG framework and establish global convergence results for
the general tensor model \eqref{cost-fn-complex-l1-diag} and the third-
and fourth-order cumulant models \eqref{eq:hermit_sy_order3-new} and
\eqref{eq:hermit_sy_order4-new}.
The numerical experiments show that the proposed Jacobi methods achieve better
source recovery and higher final objective values in several tested
settings, but do not offer a consistent runtime advantage.
The batched implementation, Jacobi-CyclicB, reduces the cost of cyclic sweeps.
Future work will focus on parallel and randomized strategies to accelerate
Jacobi-type algorithms while preserving their favorable recovery
performance and solution quality.

\appendix
\crefalias{section}{appendix}
\section{Proofs for the Jacobi-SubG framework}
\label{app:jacobi-subg-proofs}

\begin{proof}[Proof of \cref{prop:admissible-pair}]
Fix $\matr{U}\in\UN{n}$. By \eqref{eq:riemannian-subdiff} and \eqref{eq:tensor-subdiff},
the Riemannian subdifferential is obtained from coefficients
$\alpha_p\in\set{S}(g_p(\matr{U}))$.
This set is nonempty, compact, and convex. Choose its minimum-norm
element $\matr{\zeta}^\ast$, together with coefficients
$\alpha_p^\ast$ realizing it. Thus
$\|\matr{\zeta}^\ast\|_F
=
\dist\bigl(\vect{0},\partial_{\mathcal R}f(\matr{U})\bigr)$.
For a diagonal unitary matrix
$\matr{D}=\operatorname{diag}(e^{\mathrm i\phi_1},\ldots,
e^{\mathrm i\phi_n})$, phase covariance gives
$g_p(\matr{U}\matr{D})
=e^{\mathrm i(d-2r)\phi_p}g_p(\matr{U})$,
and hence $f(\matr{U}\matr{D})=f(\matr{U})$. For
$\matr{Z}_{\vect{\omega}}
=\matr{U}\,\mathrm i\operatorname{diag}(\omega_1,\ldots,\omega_n)$,
\[
\begin{aligned}
Dg_p(\matr{U})[\matr{Z}_{\vect{\omega}}]
=\mathrm i(d-2r)\omega_pg_p(\matr{U}),\ \
\langle\matr{\zeta}^\ast,\matr{Z}_{\vect{\omega}}\rangle
=\sum_{p=1}^n
\operatorname{Re}\!\left(
\overline{\alpha_p^\ast}\,
\mathrm i(d-2r)\omega_pg_p(\matr{U})
\right)=0.
\end{aligned}
\]
Indeed, if $g_p(\matr{U})\ne0$, then
$\alpha_p^\ast=g_p(\matr{U})/|g_p(\matr{U})|$; if
$g_p(\matr{U})=0$, the corresponding directional derivative
vanishes. Hence $\matr{\zeta}^\ast$ is orthogonal
to the phase tangent space.

For each pair $i<j$, let
$\phi_{ij}(z)=
\matr{U}\matr{G}^{(i,j,\matr{\Psi}(z))}$,
where $z\in\CC$ with $|z|<1$, $s=z$,
and $c=\sqrt{1-|z|^2}$.
Set $\widetilde h_{ij}=f\circ\phi_{ij}$.
As in \cref{sec:jacobi-subg}, we write
$\partial h_{(i,j),\matr{U}}(\matr{I}_2)$ for
$\partial\widetilde h_{ij}(0)$.
Applying the same exact chain rule to
$\ell\circ\vect{g}\circ\phi_{ij}$ yields
\begin{align*}
\partial\widetilde h_{ij}(0)
&=
\left\{
\sum_{p=1}^n
D(g_p\circ\phi_{ij})(0)^*[\alpha_p]:
\alpha_p\in\set{S}(g_p(\matr{U}))
\right\},\\
\vect{\xi}_{ij}
&\eqdef
D\phi_{ij}(0)^*\matr{\zeta}^\ast
=
\sum_{p=1}^n
D(g_p\circ\phi_{ij})(0)^*[\alpha_p^\ast]
\in\partial\widetilde h_{ij}(0).
\end{align*}
Thus the same coefficients $\alpha_p^\ast$ produce a valid local
subgradient for every pair.
Set
$\matr{\Omega}_{ij}(z)=
\overline z\,\vect{e}_j\vect{e}_i^\T
-z\,\vect{e}_i\vect{e}_j^\T$.
Then
$D\phi_{ij}(0)[z]=\matr{U}\matr{\Omega}_{ij}(z)$
and $\|\matr{\Omega}_{ij}(z)\|_F^2=2|z|^2$.
If $\operatorname{proj}_{ij}$ denotes the orthogonal projection
onto the corresponding Jacobi tangent block, then
$\|\vect{\xi}_{ij}\|^2
=2\|\operatorname{proj}_{ij}\matr{\zeta}^\ast\|_F^2$.
These blocks are mutually orthogonal and, together with the phase
tangent space, form an orthogonal decomposition of
$\mathrm T_{\matr{U}}\UN{n}$.
Since $\matr{\zeta}^\ast$ has no phase component,
\[
\sum_{i<j}\|\vect{\xi}_{ij}\|^2
=
2\sum_{i<j}
\|\operatorname{proj}_{ij}\matr{\zeta}^\ast\|_F^2
=
2\|\matr{\zeta}^\ast\|_F^2.
\]
There are $n(n-1)/2$ pairs, so at least one satisfies
\[
\|\vect{\xi}_{ij}\|
\geq
\frac{2}{\sqrt{n(n-1)}}\|\matr{\zeta}^\ast\|_F
\geq
\delta\,
\dist\bigl(\vect{0},\partial_{\mathcal R}f(\matr{U})\bigr),
\]
where the last inequality follows from the definition of
$\matr{\zeta}^\ast$ and the prescribed bound on $\delta$.
This proves \eqref{inequality-gra-based-2}.
\end{proof}

\begin{proof}[Proof of \cref{lem:nonvanishing-safeguard}]
Only the refinement branch requires verification; there
$\widehat{\Delta}_k>0$.
For $p\in\{i_k,j_k\}$, the function
$t\mapsto|g_p(\matr{U}_k(t))|^2$ is real analytic on a neighborhood
of $[0,1]$ and positive at $t=0$.
Its zeros are therefore isolated, so both selected contractions
are nonzero for $t<1$ sufficiently close to $1$.
Continuity gives
$h_k(\matr{\Psi}_k(t))-h_k(\matr{I}_2)
\geq\tfrac12\widehat{\Delta}_k$ for such $t$.
Thus the search terminates, and the other contractions are unchanged.
The gain bound holds by construction, while
$\|\matr{\Psi}_k(t)-\matr{I}_2\|_F
=2\sqrt2\sin(t\widehat\theta_k/2)$ is nondecreasing on $[0,1]$.
The zero-gain assertion follows from the first branch of the safeguard.
\end{proof}

\begin{proof}[Proof of \cref{thm:jacobi-conditional}]
By \cref{lem:nonvanishing-safeguard}, every iterate has nonzero
diagonal contractions. Thus $f$ is smooth near $\matr{U}_{k-1}$,
and $h_k$ is smooth near $\matr{I}_2$.
Since $\widetilde h_k$ is smooth at the origin,
$\partial h_k(\matr{I}_2)$ is a singleton, and its element
$\vect{\xi}_k$ satisfies
$\dist(\vect{0},\partial h_k(\matr{I}_2))
=\|\vect{\xi}_k\|
\geq\delta\|\grad f(\matr{U}_{k-1})\|_F$.
Combining \eqref{eq:subproblem_sufficient_ascent} with
\cref{lem:nonvanishing-safeguard} and
$\|\matr{U}_k-\matr{U}_{k-1}\|_F
=\|\matr{\Psi}_k-\matr{I}_2\|_F$ gives, for all sufficiently large $k$,
\begin{equation}\label{eq:global-sufficient-ascent}
f(\matr{U}_k)-f(\matr{U}_{k-1})
\geq
a\|\grad f(\matr{U}_{k-1})\|_F
\|\matr{U}_k-\matr{U}_{k-1}\|_F,
\qquad
a:=\frac{\varepsilon_{\mathrm{sub}}\delta}{2}>0.
\end{equation}

Regard the ambient complex matrix space as a real Euclidean space
and set $\varphi=-f$.
Compactness of $\UN{n}$ supplies an accumulation point
$\matr{U}_\ast\in\UN{n}$.
The function $\Phi=\varphi+\delta_{\UN{n}}$ is proper, lower
semicontinuous, semialgebraic, and continuous on its closed domain,
with a locally Lipschitz restriction to $\UN{n}$.
The nonsmooth {\L}ojasiewicz inequality
\cite{bolte2007lojasiewicz}, together with
$\dist(\vect{0},\partial\Phi(\matr{U}))
=\dist(\vect{0},\partial_{\mathcal R}\varphi(\matr{U}))$
for $\matr{U}\in\UN{n}$, therefore yields
\eqref{eq:riemannian-lojasiewicz} at $\matr{U}_\ast$
with some exponent $\mu\in[0,1)$. 
At every iterate, smoothness gives
$\partial_{\mathcal R}\varphi(\matr{U}_k)
=\{-\grad f(\matr{U}_k)\}$.
Hence \eqref{eq:global-sufficient-ascent} gives
\textup{(A1)} of \cref{prop:riemannian-KL-convergence}
with $\alpha=\beta=1$.
Condition \textup{(A2)} is automatic with $\eta=0$, and
\textup{(A3)} follows from the zero-gain rule in
\cref{lem:nonvanishing-safeguard}.
Since $(\alpha+\eta)\mu=\mu<1$,
\cref{prop:riemannian-KL-convergence} proves that
$\matr{U}_k\to\matr{U}_\ast$.
\end{proof}

\begin{proof}[Proof of \cref{thm:jacobi-specialized}]
Both candidate generators are nondecreasing.
The fourth-order solver maximizes an active branch, possibly minus
a nonnegative penalty vanishing at the identity.
The third-order solver starts with maximizing phases and performs
nondecreasing auxiliary updates; the reduced objective dominates
the returned auxiliary value.
By \cref{lem:nonvanishing-safeguard}, every current point has
nonzero diagonal contractions.
The proofs of
\cref{prop:uniform-ascent-complex4,prop:uniform-ascent-complex3}
require only this pointwise nonvanishing, and their constants
are independent of the contraction magnitudes.
Since the full and reduced objectives differ only by $C_{k,\mathsf X}$,
these estimates yield \eqref{eq:subproblem_sufficient_ascent}
for the candidates. Apply \cref{thm:jacobi-conditional}.
\end{proof}

\noindent\textbf{Acknowledgments.}
This work was supported by the Guangdong Basic and Applied Basic
Research Foundation (2026A1515050001, 2026A1515010188).

\bibliographystyle{siamplain}
\bibliography{ReferenceTensor.bib}

\begin{thebibliography}{10}

\bibitem{AbsMA05:sjo}
{\sc P.-A. Absil, R.~Mahony, and B.~Andrews}, {\em Convergence of the iterates
  of descent methods for analytic cost functions}, SIAM Journal on
  Optimization, 16 (2005), pp.~531--547.

\bibitem{attouch2013convergence}
{\sc H.~Attouch, J.~Bolte, and B.~F. Svaiter}, {\em Convergence of descent
  methods for semi-algebraic and tame problems: proximal algorithms,
  forward--backward splitting, and regularized gauss--seidel methods},
  Mathematical Programming, 137 (2013), pp.~91--129.

\bibitem{begovic2024trace}
{\sc E.~Begovi{\'c}~Kova{\v{c}} and A.~Perkovi{\'c}}, {\em Trace maximization
  algorithm for the approximate tensor diagonalization}, Linear and multilinear
  algebra, 72 (2024), pp.~429--450.

\bibitem{bolte2007lojasiewicz}
{\sc J.~Bolte, A.~Daniilidis, and A.~Lewis}, {\em The {\l}ojasiewicz inequality
  for nonsmooth subanalytic functions with applications to subgradient
  dynamical systems}, SIAM Journal on Optimization, 17 (2007), pp.~1205--1223.

\bibitem{bolte2014proximal}
{\sc J.~Bolte, S.~Sabach, and M.~Teboulle}, {\em Proximal alternating
  linearized minimization for nonconvex and nonsmooth problems}, Mathematical
  Programming, 146 (2014), pp.~459--494.

\bibitem{boumal2023intromanifolds}
{\sc N.~Boumal}, {\em An introduction to optimization on smooth manifolds},
  Cambridge University Press, 2023.

\bibitem{cardoso1993blind}
{\sc J.-F. Cardoso and A.~Souloumiac}, {\em Blind beamforming for non-gaussian
  signals}, in IEE proceedings F (radar and signal processing), vol.~140, IET,
  1993, pp.~362--370.

\bibitem{cardoso1996jacobi}
{\sc J.-F. Cardoso and A.~Souloumiac}, {\em Jacobi angles for simultaneous
  diagonalization}, SIAM Journal on Matrix Analysis and Applications, 17
  (1996), pp.~161--164.

\bibitem{comon1994independent}
{\sc P.~Comon}, {\em Independent component analysis, a new concept?}, Signal
  processing, 36 (1994), pp.~287--314.

\bibitem{Como10:book}
{\sc P.~Comon and C.~Jutten}, eds., {\em Handbook of Blind Source Separation},
  Academic Press, Oxford, 2010.

\bibitem{comon1997improved}
{\sc P.~Comon and E.~Moreau}, {\em Improved contrast dedicated to blind
  separation in communications}, in 1997 IEEE International Conference on
  Acoustics, Speech, and Signal Processing, vol.~5, IEEE, 1997, pp.~3453--3456.

\bibitem{comon2007tensor}
{\sc P.~Comon and M.~Sorensen}, {\em Tensor diagonalization by orthogonal
  transforms}, Report ISRN I3S-RR-2007-06-FR,  (2007).

\bibitem{de1997signal}
{\sc L.~De~Lathauwer}, {\em Signal processing based on multilinear algebra},
  Katholieke Universiteit Leuven, 1997.

\bibitem{Lathauwer96:simultaneous}
{\sc L.~{De Lathauwer}, B.~{De Moor}, and J.~Vandewalle}, {\em Blind source
  separation by simultaneous third-order tensor diagonalization}, in 1996 8th
  European Signal Processing Conference (EUSIPCO 1996), 1996, pp.~1--4.

\bibitem{drusvyatskiy2019efficiency}
{\sc D.~Drusvyatskiy and C.~Paquette}, {\em Efficiency of minimizing
  compositions of convex functions and smooth maps}, Mathematical Programming,
  178 (2019), pp.~503--558, \url{https://doi.org/10.1007/s10107-018-1311-3}.

\bibitem{ge2008analysis}
{\sc F.~Ge and J.~Ma}, {\em Analysis of the kurtosis-sum objective function for
  ica}, in International Symposium on Neural Networks, Springer, 2008,
  pp.~579--588.

\bibitem{ge2010efficient}
{\sc F.~Ge and J.~Ma}, {\em An efficient pairwise kurtosis optimization
  algorithm for independent component analysis}, in International Conference on
  Intelligent Computing, Springer, 2010, pp.~94--101.

\bibitem{huang2022riemannian}
{\sc W.~Huang and K.~Wei}, {\em {R}iemannian proximal gradient methods},
  Mathematical Programming, 194 (2022), pp.~371--413.

\bibitem{hyvarinen1999fast}
{\sc A.~Hyvarinen}, {\em Fast and robust fixed-point algorithms for independent
  component analysis}, IEEE transactions on Neural Networks, 10 (1999),
  pp.~626--634.

\bibitem{Hyvarinen2001}
{\sc A.~Hyv{\"a}rinen, J.~Karhunen, and E.~Oja}, {\em Independent Component
  Analysis}, John Wiley \& Sons, Ltd, 2001.

\bibitem{hyvarinen1997fast}
{\sc A.~Hyv{\"a}rinen and E.~Oja}, {\em A fast fixed-point algorithm for
  independent component analysis}, Neural computation, 9 (1997),
  pp.~1483--1492.

\bibitem{IshtAV13:simax}
{\sc M.~Ishteva, P.-A. Absil, and P.~{Van Dooren}}, {\em Jacobi algorithm for
  the best low multilinear rank approximation of symmetric tensors}, SIAM
  Journal on Matrix Analysis and Applications, 34 (2013), pp.~651--672.

\bibitem{karvanen2004independent}
{\sc J.~Karvanen and V.~Koivunen}, {\em Independent component analysis via
  optimum combining of kurtosis and skewness-based criteria}, Journal of the
  Franklin Institute, 341 (2004), pp.~401--418.

\bibitem{kolda2011shifted}
{\sc T.~G. Kolda and J.~R. Mayo}, {\em Shifted power method for computing
  tensor eigenpairs}, SIAM Journal on Matrix Analysis and Applications, 32
  (2011), pp.~1095--1124.

\bibitem{lageman2002konvergenz}
{\sc C.~Lageman}, {\em Konvergenz reell-analytischer gradienten{\"a}hnlicher
  systeme}, Master's thesis. Germany: Institut f{\"u}r Mathematik,
  Universit{\"a}t W{\"u}rzburg,  (2002).

\bibitem{ledyaev2007nonsmooth}
{\sc Y.~Ledyaev and Q.~Zhu}, {\em Nonsmooth analysis on smooth manifolds},
  Transactions of the American Mathematical Society, 359 (2007),
  pp.~3687--3732.

\bibitem{LUC2017globally}
{\sc J.~Li, K.~Usevich, and P.~Comon}, {\em Globally convergent {J}acobi-type
  algorithms for simultaneous orthogonal symmetric tensor diagonalization},
  SIAM Journal on Matrix Analysis and Applications, 39 (2018), pp.~1--22.

\bibitem{li2023convergence}
{\sc J.~Li, K.~Usevich, and P.~Comon}, {\em Convergence of gradient-based block
  coordinate descent algorithms for nonorthogonal joint approximate
  diagonalization of matrices}, SIAM Journal on Matrix Analysis and
  Applications, 44 (2023), pp.~592--621.

\bibitem{li2019polar}
{\sc J.~Li and S.~Zhang}, {\em Polar decomposition-based algorithms on the
  product of {S}tiefel manifolds with applications in tensor approximation},
  Journal of the Operations Research Society of China,  (2024), p.~874–920.

\bibitem{li2021weakly}
{\sc X.~Li, S.~Chen, Z.~Deng, Q.~Qu, Z.~Zhu, and A.~Man-Cho~So}, {\em Weakly
  convex optimization over {S}tiefel manifold using {R}iemannian
  subgradient-type methods}, SIAM Journal on Optimization, 31 (2021),
  pp.~1605--1634.

\bibitem{li2026ell}
{\sc X.~Li and Y.~Yang}, {\em {$\ell_1$}-norm simultaneous approximate
  diagonalization and a proximal alternating maximization method}, Journal of
  Computational and Applied Mathematics, 483 (2026), p.~117421,
  \url{https://doi.org/10.1016/j.cam.2026.117421}.

\bibitem{li2026robust}
{\sc X.~Li and Y.~Yang}, {\em A robust {$\ell_1$}-norm approach to simultaneous
  approximate tensor diagonalization via {Riemannian} smoothing}, Journal of
  Applied Mathematics and Computing, 72 (2026), 143,
  \url{https://doi.org/10.1007/s12190-026-02800-4}.

\bibitem{lojasiewicz1963propriete}
{\sc S.~Lojasiewicz}, {\em Une propri{\'e}t{\'e} topologique des sous-ensembles
  analytiques r{\'e}els}, Les {\'e}quations aux d{\'e}riv{\'e}es partielles,
  117 (1963).

\bibitem{law1965ensembles}
{\sc S.~{\L}ojasiewicz}, {\em Ensembles semi-analytiques}, IHES notes,  (1965).

\bibitem{lojasiewicz1982trajectoires}
{\sc S.~Lojasiewicz}, {\em Sur les trajectoires du gradient d’une fonction
  analytique}, Seminari di geometria, 1983 (1982), pp.~115--117.

\bibitem{martin2008jacobi}
{\sc C.~D.~M. Martin and C.~F. Van~Loan}, {\em A {J}acobi-type method for
  computing orthogonal tensor decompositions}, SIAM Journal on Matrix Analysis
  and Applications, 30 (2008), pp.~1219--1232.

\bibitem{moreau1996criteria}
{\sc E.~Moreau}, {\em Criteria for complex sources separation}, in 1996 8th
  European Signal Processing Conference (EUSIPCO 1996), IEEE, 1996, pp.~1--4.

\bibitem{moreau2002generalization}
{\sc E.~Moreau}, {\em A generalization of joint-diagonalization criteria for
  source separation}, IEEE Transactions on Signal Processing, 49 (2001),
  pp.~530--541.

\bibitem{moreau1996high}
{\sc E.~Moreau and O.~Macchi}, {\em High-order contrasts for self-adaptive
  source separation}, International Journal of Adaptive Control and Signal
  Processing, 10 (1996), pp.~19--46.

\bibitem{papadias2002globally}
{\sc C.~B. Papadias}, {\em Globally convergent blind source separation based on
  a multiuser kurtosis maximization criterion}, IEEE Transactions on Signal
  processing, 48 (2000), pp.~3508--3519.

\bibitem{rockafellar1998variational}
{\sc R.~T. Rockafellar and R.~J. Wets}, {\em Variational analysis}, Springer,
  1998.

\bibitem{SU15:pro}
{\sc R.~Schneider and A.~Uschmajew}, {\em Convergence results for projected
  line-search methods on varieties of low-rank matrices via {\l{}}ojasiewicz
  inequality}, SIAM Journal on Optimization, 25 (2015), pp.~622--646.

\bibitem{sheng2023jacobi}
{\sc Z.~Sheng, J.~Li, and Q.~Ni}, {\em Jacobi-type algorithms for homogeneous
  polynomial optimization on stiefel manifolds with applications to tensor
  approximations}, Mathematics of Computation, 92 (2023), pp.~2217--2245.

\bibitem{song2016ecoica}
{\sc L.~Song and H.~Lu}, {\em Ecoica: Skewness-based ica via eigenvectors of
  cumulant operator}, in Asian Conference on Machine Learning, PMLR, 2016,
  pp.~445--460.

\bibitem{ULC2019}
{\sc K.~Usevich, J.~Li, and P.~Comon}, {\em Approximate matrix and tensor
  diagonalization by unitary transformations: convergence of {J}acobi-type
  algorithms}, SIAM Journal on Optimization, 30 (2020), pp.~2998--3028.

\bibitem{wang2023linear}
{\sc P.~Wang, H.~Liu, and A.~M.-C. So}, {\em Linear convergence of a proximal
  alternating minimization method with extrapolation for {$\ell_1$}-norm
  principal component analysis}, SIAM Journal on Optimization, 33 (2023),
  pp.~684--712.

\bibitem{zarzoso2009robust}
{\sc V.~Zarzoso and P.~Comon}, {\em Robust independent component analysis by
  iterative maximization of the kurtosis contrast with algebraic optimal step
  size}, IEEE Transactions on neural networks, 21 (2009), pp.~248--261.

\end{thebibliography}

\end{document}